\newtheorem{theorem}{Theorem}[section]
\newtheorem{lemma}[theorem]{Lemma}
\newtheorem{problem}[theorem]{Problem}
\newtheorem{proposition}[theorem]{Proposition}
\theoremstyle{definition}
\newtheorem{definition}[theorem]{Definition}
\theoremstyle{remark}
\newtheorem{remark}[theorem]{Remark}
\numberwithin{equation}{section}
\title{Asymptotics of the minimum values of Riesz and logarithmic potentials generated by greedy energy sequences on the unit circle}
\date{\today}
\author{Abey L\'{o}pez-Garc\'{i}a \qquad Ryan E. McCleary}
\begin{document}

\maketitle

\begin{comment}
\renewcommand{\thefootnote}{\fnsymbol{footnote}}
\footnotetext[1]{Department of Mathematics, University of Central Florida, 4393 Andromeda Loop North, Orlando, FL 32816, USA. email: abey.lopez-garcia\symbol{'100}ucf.edu. Corresponding author.}
\footnotetext[2]{Department of Mathematics, University of Central Florida, 4393 Andromeda Loop North, Orlando, FL 32816, USA. email: remccleary\symbol{'100}knights.ucf.edu.}
\end{comment}

\begin{abstract} In this work we investigate greedy energy sequences on the unit circle for the logarithmic and Riesz potentials. By definition, if $(a_n)_{n=0}^{\infty}$ is a greedy $s$-energy sequence on the unit circle, the Riesz potential $U_{N,s}(x):=\sum_{k=0}^{N-1}|a_k-x|^{-s}$, $s>0$, generated by the first $N$ points of the sequence attains its minimum value at the point $a_{N}$, for every $N\geq 1$. In the case $s=0$ we minimize instead the logarithmic potential $U_{N,0}(x):=-\sum_{k=0}^{N-1}\log |a_{k}-x|$. We analyze the asymptotic properties of these extremal values $U_{N,s}(a_N)$, studying separately the cases $s=0$, $0<s<1$, $s=1$, and $s>1$. We obtain second-order asymptotic formulas for $U_{N,s}(a_N)$ in the cases $s=0$, $0<s<1$, and $s=1$ (the corresponding first-order formulas are well known). A first-order result for $s>1$ is proved, and it is shown that the normalized sequence $U_{N,s}(a_N)/N^s$ is bounded and divergent in this case. We also consider, briefly, greedy energy sequences in which the minimization condition is required starting from the point $a_{p+1}$ (instead of the point $a_{1}$ as previously stated), for some $p\geq 1$. For this more general class of greedy sequences, we prove a first-order asymptotic result for $0\leq s<1$.  
\smallskip

\textbf{Keywords:} Greedy energy sequence, Leja sequence, Riesz potential, Riemann zeta function.

\smallskip

\textbf{MSC 2020:} Primary 31A15, 31C20; Secondary 11M06.

\end{abstract}

\section{Introduction and main results}

\subsection{Background}

Let $K$ be a compact and infinite set in the complex plane. A sequence of points $\{a_{n}\}_{n=0}^{\infty}\subset K$ is called a \emph{Leja sequence} on $K$ if the following extremal property holds for all $n\geq 1$:
\[
\prod_{k=0}^{n-1}|a_{n}-a_{k}|=\max_{z\in K}\prod_{k=0}^{n-1}|z-a_{k}|.
\] 
In other words, if we define the polynomials $P_{n}(z):=\prod_{k=0}^{n-1}(z-a_{k})$, the condition is that the uniform norm $\|P_{n}\|_{K}$ is attained at the point $a_{n}$ for all $n\geq 1$. This old construction was introduced by Edrei in \cite{Edrei}, and reappeared later in the works of Leja \cite{Leja} and G\'{o}rski \cite{Gorski}, the latter in the context of the Newtonian potential in $\mathbb{R}^{3}$. It is very natural to do a comparative analysis of the asymptotic properties of Leja sequences and Fekete tuples on $K$. One observes that they share several important properties. Fekete tuples are defined as follows. If we let
\begin{equation}\label{eq:nthdiamK}
v_{n}(K):=\sup\Big\{\prod_{1\leq j<k\leq n} |x_{j}-x_{k}|: x_{1},\ldots,x_{n}\in K\Big\},\qquad n\geq 2,
\end{equation}
then a Fekete $n$-tuple for $K$ is a tuple $\omega_{n}=(x_{1,n},\ldots,x_{n,n})\in K^{n}$ for which the supremum in \eqref{eq:nthdiamK} is attained. The classical Fekete-Szeg\H{o} theorem asserts that the sequence of $n$-th diameters $\delta_{n}(K)=v_{n}(K)^{2/n(n-1)}$ converges to $c(K)$, the logarithmic capacity of $K$ (see e.g. \cite[Thm. 5.5.2]{Rans}). Interestingly, as it was already proved by Edrei in \cite{Edrei}, if $\{a_{n}\}_{n=0}^{\infty}$ is a Leja sequence on $K$, then the \emph{nested} configurations $\alpha_{n}=(a_{0},\ldots,a_{n-1})$ are asymptotically extremal in the sense that
\[
\lim_{n\rightarrow\infty}\prod_{0\leq i<j\leq n-1}|a_{i}-a_{j}|^{2/n(n-1)}=c(K).
\]   
Hence, this is a common property of Leja sequences and Fekete tuples. Another important property in common, closer to the subject of this paper, is that for the monic polynomials $P_{n}$ and $Q_{n}$ whose roots are the points in $\alpha_{n}$ and $\omega_{n}$ respectively, we have
\begin{equation}\label{eq:extrpoly}
\lim_{n\rightarrow\infty}\|P_{n}\|_{K}^{1/n}=\lim_{n\rightarrow\infty}\|Q_{n}\|_{K}^{1/n}=c(K),
\end{equation}
see \cite[Lemma 1]{Leja} and \cite[Thm. 5.5.4]{Rans}.

Let us consider the analogous constructions in the context of the Riesz potential. Given a fixed $s>0$, a sequence $\{a_{n}\}_{n=0}^{\infty}\subset K$ is called a \emph{greedy $s$-energy sequence} on $K$ if for every $n\geq 1$ we have
\[
\sum_{k=0}^{n-1} \frac{1}{|a_{n}-a_{k}|^s}=\inf_{z \in K} \sum_{k=0}^{n-1} \frac{1}{|z-a_{k}|^s}.
\]
The Riesz $s$-potential generated by the first $N$ points of the sequence $\{a_{n}\}_{n=0}^{\infty}$ will be denoted
\[
U_{N,s}(z):=\sum_{k=0}^{N-1}\frac{1}{|z-a_{k}|^s},\qquad z\in K.
\]
So we have
\begin{equation}\label{eq:minpotgreedy}
U_{N,s}(a_{N}) = \inf_{z \in K} U_{N,s}(z),\qquad N\geq 1.
\end{equation}
We will use the notation $\alpha_{N,s}:=(a_{0},\ldots,a_{N-1})$ for the tuple formed by the first $N$ points of the greedy $s$-energy sequence. We call this tuple the \textit{$N$-th section} of the sequence.

For a configuration $\omega=(x_1,\ldots,x_N)$ of $N \geq 2$ points in $K$, we define 
\[
E_s(\omega):=\sum_{1\leq i \neq j \leq N} \frac{1}{|x_i - x_j|^s} = 2 \sum_{1\leq i<j\leq N}\frac{1}{|x_i-x_j|^s},
\]
called the Riesz $s$-energy of $\omega$. We say that a configuration $\omega_{N,s}\in K^N$ is an $N$-point minimal $s$-energy configuration on $K$ if 
\begin{equation}\label{eq:minenerg}
E_{s}(\omega_{N,s})=\inf\{E_{s}(\omega): \omega\in K^N\}. 
\end{equation}
Let $\mathcal{P}(K)$ denote the space of all Borel probability measures supported on $K$. The Riesz $s$-energy of $\mu\in\mathcal{P}(K)$ is the double integral
\[
I_{s}(\mu):=\iint\frac{1}{|x-y|^{s}}\,d\mu(x)\,d\mu(y).
\]
If the planar compact set $K$ has positive Hausdorff dimension $\mathrm{dim}\,K$ and $0<s<\mathrm{dim}\,K$, the first-order asymptotic behavior of $E_{s}(\omega_{N,s})$ is the well-known formula
\begin{equation}\label{asympoptconf}
\lim_{N\rightarrow\infty}\frac{E_{s}(\omega_{N,s})}{N^{2}}=w_{s}(K)
\end{equation}
where the limit is the Wiener constant
\[
w_{s}(K)=\inf_{\mu\in\mathcal{P}(K)} I_{s}(\mu)<\infty
\]
see e.g. \cite[Thm. 4.4.9]{BorHarSaff}. Similarly to what happens in the Leja-Fekete context, in this range for $s$ formula \eqref{asympoptconf} is also valid if we replace $\omega_{N,s}$ by $\alpha_{N,s}$, see \cite[Thm. 2.1]{LopSaff}. Furthermore, by a result of Siciak (see \cite[Lemma 3.1]{Siciak}) the extremal values in \eqref{eq:minpotgreedy} satisfy
\begin{equation}\label{asympUnsgen}
\lim_{N\rightarrow\infty}\frac{U_{N,s}(a_{N})}{N}=w_{s}(K).
\end{equation}
In the range $s\geq\mathrm{dim}\,K$ the first-order asymptotic behavior of the quantities discussed is not understood in general, and additional assumptions on the set $K$ are usually imposed.  

In this work we focus on the particular case of the unit circle $K=S^{1}$, and analyze in detail the asymptotic behavior of the extremal values $U_{N,s}(a_{N})$ in \eqref{eq:minpotgreedy}, for all values of $s>0$. Our results  complement those obtained in \cite{LopWag}, which described in this context the asymptotic behavior of the energies $E_{s}(\alpha_{N,s})$. We also refine the asymptotic formula \eqref{eq:extrpoly} for the polynomials $P_{n}$.

It is a well known fact that for $0<s<1$, the normalized arc-length measure on $S^{1}$ is the unique measure in $\mathcal{P}(S^1)$ with minimal $s$-energy. We will denote this measure by $\sigma$. The $s$-energy of $\sigma$ is given explicitly by
\[
I_{s}(\sigma)=\frac{\Gamma(1-s)}{\Gamma\left(1-\frac{s}{2}\right)^{2}}=\frac{2^{-s}}{\sqrt{\pi}}\frac{\Gamma(\frac{1-s}{2})}{\Gamma(1-\frac{s}{2})},\qquad 0<s<1, 
\]
cf. \cite[Cor. A.11.4]{BorHarSaff}. On the other hand, in the range $s\geq 1$ every $\mu\in\mathcal{P}(S^{1})$ has infinite $s$-energy.

It was proved in \cite{Gotz} that for any $s>0$, every $N$-point minimal $s$-energy configuration $\omega_{N,s}$ on $S^{1}$ consists of $N$ equally spaced points, and in \cite{BrauHardSaff} a complete asymptotic expansion in $N$ of $E_{s}(\omega_{N,s})$ was found. The analyses in \cite{LopSaff, LopWag} revealed how the asymptotic behaviors of the quantities $E_{s}(\alpha_{N,s})$ and $E_{s}(\omega_{N,s})$ differ significantly when we look at first-order asymptotics in the range $s>\mathrm{dim}\,S^{1}=1$, and they also differ for second-order asymptotics in the range $0<s\leq 1$, in contrast to the similarities described above.

The method we have employed to obtain our asymptotic results (except Theorem~\ref{theo:genseq}) is based primarily on the representation of $U_{N,s}(a_{N})$ in terms of the binary expansion of $N$, which is our equation \eqref{eq:descUnan}. This type of formula was first used in \cite{LopMcc} in our study of greedy $\lambda$-energy sequences, associated with the nonsingular potentials $|x-y|^{\lambda}$, $\lambda>0$. The identity \eqref{eq:descUnan} for $U_{N,s}(a_{N})$ is deduced from a geometric description due to Bia\l as-Cie\.z and Calvi \cite{BiaCal} of the configurations $\alpha_{N,s}$ in terms of the binary expansion of $N$. Their result was stated for Leja sequences on the unit circle, which in this set have the same geometric structure as greedy $s$-energy sequences. 

In recent years there has been an increasing interest in the study of Leja and greedy energy sequences in Euclidean spaces, as they provide many interesting applications for different areas in mathematics. In addition to the references mentioned above, we find diverse applications in works on numerical analysis, numerical linear algebra, approximation theory \cite{BagCalRei,BMSV,CalManh,CalManh2,Chkifa,CorDrag,Marchi,JWZ,Reichel,TayTotik}, potential theory \cite{CorDrag,Gotz2,Lop,Pritsker,SaffTotik}, discrepancy theory \cite{Pau,St}, stochastic analysis \cite{NarJak}, among other areas.

\subsection{Statement of main results}

Let $(a_{n})_{n=0}^{\infty}$ be a Leja sequence on the unit circle, and let $U_{N,0}(z):=\sum_{k=0}^{N-1}\log\frac{1}{|z-a_{k}|}$, $N\geq 1$. The logarithmic capacity of the unit circle is $c(S^{1})=1$, hence the formula \eqref{eq:extrpoly} for $P_{N}$ is equivalent to 
\[
\lim_{N\rightarrow\infty}\frac{\log \|P_{N}\|_{S^{1}}}{N}=\lim_{N\rightarrow\infty}\frac{U_{N,0}(a_{N})}{N}=0.
\]
Our first result is the following improvement of this formula.

\begin{theorem}\label{theo:logcase}
Let $(a_{n})_{n=0}^{\infty}$ be a Leja sequence on the unit circle, and let $P_{N}(z)=\prod_{k=0}^{N-1}(z-a_{k})$. For all $N\geq 1$, we have
\begin{equation}\label{ineq:theo1}
0<\frac{\log \|P_{N}\|}{\log (N+1)}\leq 1,
\end{equation}
where $\|P_{N}\|=\sup_{z\in S^{1}}|P_{N}(z)|$. The upper bound is attained if and only if $N$ is of the form $N=2^{m}-1$, $m\geq 1$, and we have
\begin{equation}\label{liminfsupnorm}
\liminf_{N\rightarrow\infty}\frac{\log \|P_{N}\|}{\log(N+1)}=0.
\end{equation}
Moreover, for each $N\geq 1$ fixed, the sequence
\begin{equation}\label{eq:specsubs}
\left(\frac{\log\|P_{2^{k} N}\|}{\log(2^{k} N+1)}\right)_{k=0}^{\infty}
\end{equation} 
is decreasing and its limit is $0$. 
\end{theorem}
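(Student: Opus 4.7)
The approach rests on the Bia\l{}as--Cie\.z and Calvi structural description of Leja sequences on $S^{1}$; by rotation invariance of $\|\cdot\|_{S^{1}}$ I may normalize $a_{0}=1$. That description gives the recursion $a_{2^{k}+j}=a_{2^{k}}\,a_{j}$ for $0\leq j<2^{k}$, so the first $2^{k}$ Leja points are precisely the $2^{k}$-th roots of unity. From it I would extract two inductive consequences: (i) the pairing $a_{2k+1}=-a_{2k}$ for every $k\geq 0$, and (ii) writing $N=\sum_{i=0}^{r}2^{n_{i}}$ with $n_{0}>\cdots>n_{r}\geq 0$, the factorization
\[
P_{N}(z)=\prod_{i=0}^{r}\bigl(z^{2^{n_{i}}}-c_{i}\bigr),\qquad |c_{i}|=1.
\]

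Since each factor has sup-norm $2$ on $S^{1}$, (ii) yields $\|P_{N}\|_{S^{1}}\leq 2^{s(N)}$, where $s(N)=r+1$ is the number of $1$'s in the binary expansion of $N$. The smallest $N$ with a given $s(N)$ is $2^{s(N)}-1$, so $N+1\geq 2^{s(N)}$ with equality iff $N=2^{m}-1$ for some $m\geq 1$. Combining,
\[
\log\|P_{N}\|\leq s(N)\log 2\leq\log(N+1),
\]
establishing the upper bound in \eqref{ineq:theo1}. Equality throughout forces $N=2^{m}-1$; conversely, for such $N$ the $2^{m}$-th section is the full set of $2^{m}$-th roots of unity, so $P_{2^{m}}(z)=z^{2^{m}}-1$ and $P_{2^{m}-1}(z)=(z^{2^{m}}-1)/(z-a_{2^{m}-1})$. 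L'H\^opital gives $|P_{2^{m}-1}(a_{2^{m}-1})|=2^{m}$, which by the Leja extremality equals $\|P_{2^{m}-1}\|$, confirming that equality is attained exactly on this sequence. For strict positivity, Jensen's formula applied to the monic polynomial $P_{N}$ (note $|P_{N}(0)|=1$) gives $\int_{0}^{2\pi}\log|P_{N}(e^{i\theta})|\,d\theta/(2\pi)=0$; since $P_{N}$ has zeros on $S^{1}$, $\log|P_{N}|$ is $-\infty$ there and continuous elsewhere, so it cannot be $\leq 0$ everywhere, whence $\|P_{N}\|_{S^{1}}>1$.

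For \eqref{liminfsupnorm}, take $N=2^{m}$: (ii) collapses to $P_{2^{m}}(z)=z^{2^{m}}-1$ with sup-norm $2$, and the ratio equals $\log 2/\log(2^{m}+1)\to 0$. For the subsequence \eqref{eq:specsubs}, I would use (i) to pair factors,
\[
P_{2N}(z)=\prod_{k=0}^{N-1}(z-a_{2k})(z+a_{2k})=\prod_{k=0}^{N-1}\bigl(z^{2}-a_{2k}^{2}\bigr)=B(z^{2}),
\]
so $\|P_{2N}\|_{S^{1}}=\|B\|_{S^{1}}$ since $z\mapsto z^{2}$ is surjective on $S^{1}$. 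It then suffices to show $\|B\|_{S^{1}}=\|P_{N}\|_{S^{1}}$: with the canonical Leja choice $a_{2^{k}}^{2}=a_{2^{k-1}}$, a short induction using (i) and the recursion gives $a_{2k}^{2}=a_{k}$ for all $k$, so $B=P_{N}$ as polynomials. Iterating, $\|P_{2^{k}N}\|=\|P_{N}\|$ for every $k\geq 0$, and the ratio in \eqref{eq:specsubs} becomes $\log\|P_{N}\|/\log(2^{k}N+1)$: a positive constant numerator over a strictly increasing, diverging denominator, hence strictly decreasing to $0$.

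The main obstacle is justifying the equality $\|P_{2N}\|_{S^{1}}=\|P_{N}\|_{S^{1}}$ for an \emph{arbitrary} (non-canonical) Leja sequence. In general $\{a_{2k}^{2}\}_{k<N}$ does \emph{not} coincide with $\{a_{k}\}_{k<N}$ as a multiset, so the clean polynomial identity $P_{2N}(z)=P_{N}(z^{2})$ may fail; the equality of $S^{1}$-sup-norms must instead be extracted from the fact that both $B$ and $P_{N}$ inherit the Bia\l{}as--Cie\.z--Calvi form $\prod_{i}(w^{2^{n_{i}}}-c_{i})$ with $|c_{i}|=1$, together with the Leja constraints that force the common value $2^{s(N)}$ for the maximum of such products on $S^{1}$.
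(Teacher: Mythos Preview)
Your argument for the inequality \eqref{ineq:theo1}, the characterization of equality, and the liminf statement is correct and essentially self-contained, but it is considerably more elaborate than the paper's. The paper simply invokes the Calvi--Van Manh identity
\[
\|P_N\|_{S^1}=2^{\tau_b(N)}
\]
(your $s(N)=\tau_b(N)$) as a black box. Once this \emph{exact} value is in hand, everything is a one-liner: positivity is just $\tau_b(N)\geq 1$; the upper bound is $\tau_b(N)\log 2\leq\log(N+1)$, with equality iff $N=2^{\tau_b(N)}-1$; the liminf follows from $\tau_b(2^r)=1$; and for the final claim one observes $\tau_b(2^kN)=\tau_b(N)$, so the numerator of \eqref{eq:specsubs} is literally the constant $\tau_b(N)\log 2$.

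The gap you flag in your last paragraph is real, and it is precisely where the exact identity is needed. From the factorization $P_N(z)=\prod_i(z^{2^{n_i}}-c_i)$ you only extract $\|P_N\|\leq 2^{s(N)}$, and your route to $\|P_{2N}\|=\|P_N\|$ via $P_{2N}(z)=B(z^2)$ with $B(w)=\prod_{k<N}(w-a_{2k}^2)$ does reduce the problem to $\|B\|=\|P_N\|$, but for a non-canonical Leja sequence the multisets $\{a_{2k}^2\}$ and $\{a_k\}$ need not coincide, and there is no reason the sup-norms of two products $\prod_i(w^{2^{n_i}}-c_i)$ and $\prod_i(w^{2^{n_i}}-c_i')$ with different unimodular $c_i,c_i'$ should agree without further input. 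What actually forces the common value $2^{s(N)}$ is exactly the Calvi--Van Manh lemma; your phrase ``the Leja constraints that force the common value $2^{s(N)}$'' is a pointer to that result rather than a proof of it. The cleanest fix is to cite it directly, as the paper does, which also lets you discard the Jensen and L'H\^opital detours.
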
 

We introduce now several definitions that play a central role in our analysis. For an integer $N\geq 1$, let $\tau_{b}(N)$ denote the number of $1$'s in the binary representation of $N$, that is, $\tau_{b}(N)=p$ if $N$ is of the form 
\begin{equation}\label{binrep}
N=2^{n_1}+2^{n_2}+\cdots+2^{n_{p}},\qquad n_{1}>n_{2}>\cdots>n_{p}\geq 0.
\end{equation}

\begin{definition}
Given an integer $p\geq 1$, let $\Theta_p$ denote the set of 
all vectors $\vec{\theta} = (\theta_1,\ldots,\theta_p)\in\mathbb{R}^{p}$ such that there exists an infinite sequence $\mathcal{N}\subset\mathbb{N}$ satisfying the following properties: $\tau_{b}(N)=p$ for all $N\in\mathcal{N}$, and for $N = 2^{n_{1}}+2^{n_{2}}+\cdots+ 2^{n_{p}}\in\mathcal{N}$ as in \eqref{binrep} we have
\begin{equation}\label{eq:thetadef:2}
\lim_{N\in\mathcal{N}}\frac{2^{n_k}}{N}=\theta_k,\qquad \mbox{for all}\,\,\,\,1\leq k\leq p. 
\end{equation}
\end{definition}

For example, we have $\vec{\theta}=(8/13, 4/13, 1/13, 0)\in\Theta_{4}$ since the sequence $2^{n+3}+2^{n+2}+2^{n}+1$, $n\geq 1$, generates in the limit that vector $\vec{\theta}$. The set $\Theta_{p}$ can be given a more direct definition, but the way it is defined above presents the asymptotic property \eqref{eq:thetadef:2}, which is precisely the one that is needed in our analysis. 

The reader can easily check that $\Theta_{p}$ consists of all vectors $\vec{\theta}=(\theta_{1},\ldots,\theta_{p})$ that can be written in the form
\begin{equation}\label{eq:altdefvectheta}
\vec{\theta}=\left(\frac{2^{n_{1}}}{M},\frac{2^{n_{2}}}{M},\ldots,\frac{2^{n_{t}}}{M},0,\ldots,0\right)
\end{equation}
where $M=2^{n_{1}}+2^{n_{2}}+\cdots+2^{n_{t}}$ is an odd integer with $n_{1}>n_{2}>\cdots>n_{t}=0$ and $1\leq t\leq p$. The number of zeros in \eqref{eq:altdefvectheta} is therefore $p-t$, if they appear.

\begin{definition}
We define the set $\Theta:=\bigcup_{p\in\mathbb{N}}\Theta_{p}$. Let $G:\Theta \times (0,\infty) \longrightarrow\mathbb{R}$ be the function given by
\begin{equation}\label{def:Gcapfunc}
G(\vec{\theta}; s):=\sum_{k=1}^{p} \theta_k^s\qquad \mbox{if}\,\,\,\,\vec{\theta}=(\theta_{1},\ldots,\theta_{p}).
\end{equation}
We also define the function $\Lambda:\Theta\longrightarrow\mathbb{R}$ by 
\begin{equation}\label{def:Lambdafunc}
\Lambda(\vec{\theta}) := \sum_{k=1}^{p} \theta_k\log\theta_k\qquad \mbox{if}\,\,\,\,\vec{\theta}=(\theta_{1},\ldots,\theta_{p}),
\end{equation}
where we understand that $\theta_k\log\theta_k=0$ if $\theta_k=0$. 
\end{definition}

We will show that for each fixed $s\in(0,\infty)$, the function $G(\vec{\theta};s)$ is uniformly bounded in $\vec{\theta}$, see \eqref{eq:gbounds} and \eqref{eq:boundsGsg1}. The function $\Lambda$ is also bounded, see Lemma~\ref{lembound}.  

Define the functions
\begin{align}
\overline{g}(s) & :=\sup_{\vec{\theta}\in\Theta} G(\vec{\theta}; s),\qquad 0<s<1,\label{def:littlegub}\\
\underline{g}(s) & :=\inf_{\vec{\theta}\in\Theta} G(\vec{\theta}; s),\qquad s>1,\label{def:littleglb}
\end{align}
and the constant
\begin{equation}\label{def:lclambda}
\lambda:=\inf_{\vec{\theta}\in\Theta} \Lambda(\vec{\theta}).
\end{equation}

In this paper, $\zeta(s)$ denotes as usual the classical Riemann zeta function $\sum_{n=1}^{\infty}n^{-s}$, $s>1$, or its analytic extension to $\mathbb{C}\setminus\{1\}$. Observe that this function takes negative values in the range $0<s<1$.

If $0 < s < 1$ and $(a_n)_{n=0}^{\infty}$ is a greedy $s$-energy sequence on $S^1$, then in virtue of \eqref{asympUnsgen} we have
\[
\lim_{N\to\infty}\frac{U_{N,s}(a_N)}{N}=I_{s}(\sigma). 
\]
This formula motivates the study of the second-order asymptotic behavior of $U_{N,s}(a_{N})$ in this range for $s$, which we describe now.

\begin{theorem}\label{theo:secondorderunanlimits}
Let $0 < s < 1$, and let $(a_n)_{n=0}^{\infty}$ be a greedy $s$-energy sequence on $S^1$. Then the sequence 
\begin{equation}\label{eq:secondorderunan}
\left(\frac{U_{N,s}(a_N) - N I_{s}(\sigma)}{N^s}\right)_{N=1}^{\infty} 
\end{equation}
is bounded and divergent, and we have
\begin{align}
\limsup_{N \to \infty} \frac{U_{N,s}(a_N) - N I_{s}(\sigma)}{N^s} & = (2^s-1)\frac{2\zeta(s)}{(2\pi)^s}, \label{eq:limsupunan}\\
\liminf_{N \to \infty} \frac{U_{N,s}(a_N) - N I_{s}(\sigma)}{N^s} & = \overline{g}(s)(2^s-1)\frac{2\zeta(s)}{(2\pi)^s}. \label{eq:liminfunan}
\end{align}
For each $\vec{\theta}\in\Theta$, the value $G(\vec{\theta};s)(2^s-1)\frac{2\zeta(s)}{(2\pi)^s}$ is a limit point of the sequence \eqref{eq:secondorderunan}.
\end{theorem}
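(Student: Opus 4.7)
The plan is to base the proof on the binary-expansion formula \eqref{eq:descUnan} combined with the classical second-order expansion of the Riesz energy of roots of unity (Brauchart--Hardin--Saff). By the geometric description of Bia\l as-Cie\.z and Calvi, when $N=2^n$ the configuration $\alpha_{N,s}$ is a regular $2^n$-gon and $a_N$ lies at the midpoint of one of its sides. For a regular $m$-gon, the value of its $s$-potential at such a midpoint equals $S(2m,s)-S(m,s)$, where $S(m,s):=\sum_{j=1}^{m-1}(2\sin(\pi j/m))^{-s}$, the difference arising from the odd/even partition of the summation indices. The formula \eqref{eq:descUnan} expresses $U_{N,s}(a_N)$ as a sum of such midpoint potentials, one for each block of size $2^{n_k}$ in the binary expansion $N=2^{n_1}+\cdots+2^{n_p}$. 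Applying the Brauchart--Hardin--Saff expansion
\[
S(m,s)=m\,I_s(\sigma)+\frac{2\zeta(s)}{(2\pi)^s}m^s+O(m^{s-2}),\qquad 0<s<1,
\]
to each $m=2^{n_k}$ produces a per-block contribution
\[
2^{n_k} I_s(\sigma)+(2^s-1)\frac{2\zeta(s)}{(2\pi)^s}2^{n_k s}+O\bigl(2^{n_k(s-2)}\bigr).
\]

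Summing over $k$, using $\sum_k 2^{n_k}=N$ and the convergence of $\sum_{n\geq 0}2^{n(s-2)}$ (since $s<2$), I obtain
\[
\frac{U_{N,s}(a_N)-N\,I_s(\sigma)}{N^s}=(2^s-1)\frac{2\zeta(s)}{(2\pi)^s}\sum_{k=1}^{p}\Bigl(\frac{2^{n_k}}{N}\Bigr)^{s}+o(1),
\]
which already yields boundedness of \eqref{eq:secondorderunan} via the a priori bound on $G(\cdot;s)$ referenced in \eqref{eq:gbounds}. For any $\vec\theta\in\Theta_p$ the defining sequence $\mathcal N$ makes the right-hand side converge to $(2^s-1)\frac{2\zeta(s)}{(2\pi)^s}G(\vec\theta;s)$ by continuity of $t\mapsto t^s$ on $[0,1]$, which proves the final assertion of the theorem and forces divergence as soon as $G(\cdot;s)$ takes at least two values. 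Since $\zeta(s)<0$ on $(0,1)$ the prefactor $(2^s-1)\frac{2\zeta(s)}{(2\pi)^s}$ is negative, so the $\limsup$ corresponds to the minimum and the $\liminf$ to the supremum of $G(\cdot;s)$. The inequality $t^s\geq t$ on $[0,1]$ yields $G(\vec\theta;s)\geq 1$ with equality precisely at $\vec\theta=(1,0,\ldots,0)$, realized by $N=2^n$; this gives \eqref{eq:limsupunan}. The $\liminf$ formula then corresponds to $\sup_{\vec\theta}G(\vec\theta;s)=\overline g(s)$.

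The subtlest step is the inequality $\liminf\ge\overline g(s)(2^s-1)\frac{2\zeta(s)}{(2\pi)^s}$, which requires controlling subsequential limits of $\sum_{k=1}^{p(N)}(2^{n_k}/N)^s$ when $\tau_b(N)$ may be unbounded. My plan is a truncation/compactness argument: from any subsequence $N_m$ a diagonal extraction gives pointwise limits $\theta_k^{*}=\lim_m 2^{n_k^{(m)}}/N_m$, and the geometric estimate $2^{n_k}/N\leq 2^{-(k-1)}$ yields a uniform tail bound $\sum_{k>K}\theta_k^{*\,s}\leq C\,2^{-Ks}$, so truncating at large $K$ compares the limit to the $G$-value of an element of $\Theta_p$ for some finite $p$, which is bounded by $\overline g(s)$. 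The matching direction is realized by selecting, for each $\varepsilon>0$, a vector $\vec\theta\in\Theta$ with $G(\vec\theta;s)>\overline g(s)-\varepsilon$ and evaluating along its defining sequence $\mathcal N$. The main obstacle I expect is precisely this careful tail control across varying binary lengths, coupled with verifying that the formula \eqref{eq:descUnan} decomposes cleanly enough that the per-block error terms remain summable to $O(1)$ uniformly in $N$.
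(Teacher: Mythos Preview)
Your proposal is correct and shares the paper's overall architecture---the binary-expansion formula \eqref{eq:descUnan}, per-block asymptotics from Brauchart--Hardin--Saff, and a truncation/compactness argument for subsequences with unbounded $\tau_b(N)$---but it streamlines the execution in a genuine way. You invoke BHS with its $O(m^{s-2})$ remainder, which (after summing the geometrically decaying block errors) gives the clean reduction
\[
\frac{U_{N,s}(a_N)-NI_s(\sigma)}{N^s}=c\,G_N+O(N^{-s}),\qquad G_N:=\sum_{k=1}^{\tau_b(N)}\Bigl(\tfrac{2^{n_k}}{N}\Bigr)^{s},\quad c:=(2^s-1)\tfrac{2\zeta(s)}{(2\pi)^s}<0,
\]
so the rest is pure combinatorics of $G_N$. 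The paper instead uses only the convergence $\mathcal W_s(N)\to c$ (no rate), hence carries the varying coefficients $\mathcal W_s(2^{n_k})$ through the entire argument and needs extra ingredients in the $\limsup$ direction---the sign $\mathcal W_s(2^n)<0$ from Proposition~\ref{prop:sdinequality} and the factor $(\widetilde N/N)^s$---together with a full truncation step even for the $\limsup$. Your pointwise inequality $G_N\ge 1$ (from $t^s\ge t$ on $[0,1]$) dispatches the $\limsup$ in one line; this is a real simplification.

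One point in your $\liminf$ sketch deserves to be made explicit. The truncated limit vector $(\theta_1^*,\ldots,\theta_K^*)$ is not itself in $\Theta_K$, since its entries sum to $S_K\le 1$. The comparison to $\overline g(s)$ goes through after renormalizing: with $\widetilde N_m:=\sum_{k\le K}2^{n_k^{(m)}}$ one has $\tau_b(\widetilde N_m)=K$ and $\widetilde N_m\to\infty$, so a further subsequence places $(\theta_k^*/S_K)_{k\le K}$ in $\Theta_K$, and then $\sum_{k\le K}(\theta_k^*)^s=S_K^{\,s}\,G\bigl((\theta_k^*/S_K)_k;s\bigr)\le \overline g(s)$. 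This is precisely the mechanism the paper uses (its $\widetilde N$ and passage to $\Theta_M$), only you are phrasing it at the level of the limiting $\theta_k^*$ rather than at finite $N$.
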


By limit point we mean that there is a subsequence of \eqref{eq:secondorderunan} that converges to the indicated value.

The following result can be deduced from \cite[Thm. 2.10]{LopSaff}. However, in this particular context of the unit circle we wish to provide an alternative proof of \eqref{eq:firstorders1} using the ideas developed in the present work.

\begin{theorem}\label{theo:foase1}
If $(a_n)_{n=0}^{\infty}$ is a greedy $1$-energy sequence ($s=1$) on $S^1$, then
\begin{equation} \label{eq:firstorders1}
\lim_{N \to \infty} \frac{U_{N,1}(a_N)}{N\log N} = \frac{1}{\pi}.
\end{equation}
\end{theorem}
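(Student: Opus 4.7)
The approach I would take is to combine formula~\eqref{eq:descUnan} with the Bia\l{}as-Cie\.z--Calvi geometric description of greedy sequences on $S^1$. Writing $N = 2^{n_1} + \cdots + 2^{n_p}$ with $n_1 > \cdots > n_p \geq 0$, the first $N$ points partition into $p$ blocks, the $j$-th block $B_j$ being a rotation of the $2^{n_j}$-th roots of unity. The geometric crux of the argument is that the nested, symmetric construction forces $a_N$ to sit at a midpoint of two consecutive points of each block $B_j$ simultaneously (for a singleton block, ``midpoint'' means antipode), a fact that can be verified inductively on $N$ from the Bia\l{}as-Cie\.z--Calvi characterization.

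Granted this, the analytic core is the trigonometric asymptotic that for $n = 2^m$ equally spaced points on $S^1$, the value of $U_{n,1}$ at the midpoint of two consecutive points is
\[
\sum_{\ell=1}^{2^{m-1}} \frac{1}{\sin\!\bigl(\pi(2\ell-1)/2^{m+1}\bigr)} = \frac{2^m \log 2^m}{\pi} + O(2^m),\qquad m \to \infty,
\]
obtainable from $\sin x = x + O(x^3)$ near the origin combined with the harmonic-series estimate $\sum_{\ell=1}^M (2\ell-1)^{-1} = \tfrac12 \log M + O(1)$. Applying this block by block (each contribution $\Sigma_j := \sum_{a\in B_j}|a_N-a|^{-1}$ being of exactly this midpoint form) yields
\[
U_{N,1}(a_N) = \frac{1}{\pi}\sum_{j=1}^p 2^{n_j}\log 2^{n_j} + O(N).
\]

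To conclude, I would prove the combinatorial estimate $\sum_{j=1}^p 2^{n_j}\log 2^{n_j} = N\log N + o(N\log N)$ by splitting the blocks into ``large'' ($2^{n_j}\geq N/\log N$) and ``small'' ones: the small blocks have total size $O(N/\log N)$ and contribute only $O(N)$ to the sum, while for the large blocks one has $n_j\log 2 = \log N + O(\log\log N)$ and $\sum_{\mathrm{large}} 2^{n_j} = N - O(N/\log N)$, so collectively they produce $N\log N + O(N\log\log N)$. Dividing through by $N\log N$ gives the limit $1/\pi$.

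The main obstacle is the geometric assertion that $a_N$ lies at a midpoint of every block $B_j$ and not merely of $B_1$. Establishing this rigorously requires a careful inductive analysis of the Bia\l{}as-Cie\.z--Calvi construction, tracking how the orientation of each new block relative to all previously placed ones is dictated by the greedy rule and ruling out ``asymmetric'' positions. Without this symmetric positioning, the block contribution $\Sigma_j$ would acquire boundary terms of order $2^{n_j}/\mu_j$ (where $\mu_j\in(0,1)$ parameterises the offset of $a_N$ within the block-$j$ lattice), which could easily exceed the $O(N)$ error budget and destroy the clean leading coefficient $1/\pi$.
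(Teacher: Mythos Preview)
Your proposal is correct, and in fact more streamlined than the paper's own argument for this particular theorem. Both proofs start from the identity~\eqref{eq:descUnan}, i.e.\ $U_{N,1}(a_N)=\sum_{k=1}^{p}\mathcal U_1(2^{n_k})$, but diverge thereafter. The paper derives $\mathcal U_1(N)/(N\log N)\to 1/\pi$ indirectly from the known asymptotics of $\mathcal L_1(N)$ via~\eqref{calUsNLsN}, then writes $U_{N,1}(a_N)/(N\log N)=\sum_k \mathcal W_1(2^{n_k})\,2^{n_k}\log 2^{n_k}/(N\log N)$ and handles an arbitrary convergent subsequence by a case split ($\tau_b(N)$ bounded vs.\ $\tau_b(N)\to\infty$), the latter case using a truncation at $M$ terms and limits along $\Theta_M$-vectors. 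You instead establish the block asymptotic $\mathcal U_1(2^m)=\tfrac{1}{\pi}2^m\log 2^m+O(2^m)$ by an elementary $1/\sin$ computation, sum to get $U_{N,1}(a_N)=\tfrac{1}{\pi}\sum_j 2^{n_j}\log 2^{n_j}+O(N)$, and finish with the clean combinatorial estimate $\sum_j 2^{n_j}\log 2^{n_j}=N\log N+O(N\log\log N)$ via the large/small splitting. Your route is shorter and self-contained (no appeal to external asymptotics of $\mathcal L_1$); the paper's subsequence machinery is really designed for the second-order Theorems~\ref{theo:secondorderunanlimits}, \ref{theo:soase1}, \ref{theo:foasg1} and is heavier than necessary for the first-order statement.

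One clarification: the ``main obstacle'' you flag at the end---that $a_N$ is a midpoint of \emph{every} block $B_j$---is precisely the content of Proposition~\ref{prop:descUnan}, whose formula~\eqref{eq:descUnan} you invoke in your opening line. That proposition is proved in the paper by the very induction on $p$ you describe, using the Bia\l as-Cie\.z--Calvi structure (Proposition~\ref{prop:geodesc}). So the obstacle is already removed; you may take~\eqref{eq:descUnan} as given and proceed directly to the analytic part. Also make explicit that the implied constant in your $O(2^m)$ is uniform in $m$ (this follows from your argument, and is needed to sum the block errors to $O(N)$).
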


The following theorem describes the corresponding second-order asymptotic behavior of $U_{N,1}(a_N)$.  

\begin{theorem}\label{theo:soase1}
Let $(a_{n})_{n=0}^{\infty}$ be a greedy $s$-energy sequence on $S^1$ with $s=1$. Then the sequence
\begin{equation}\label{normsecordseqse1}
\left(\frac{U_{N,1}(a_N)-\frac{1}{\pi}N\log N}{N}\right)_{N=1}^{\infty}
\end{equation}
is bounded and divergent. Moreover, we have
\begin{align}
\limsup_{N\to\infty} \frac{U_{N,1}(a_N)-\frac{1}{\pi}N\log N}{N} & = \frac{1}{\pi}\left(\gamma + \log\left(\frac{8}{\pi}\right)\right),\label{eq:secordlimsups1}\\
\liminf_{N\to\infty} \frac{U_{N,1}(a_N)-\frac{1}{\pi}N\log N}{N} & = \frac{1}{\pi}\left(\gamma + \log\left(\frac{8}{\pi}\right)+\lambda\right),\label{eq:secordliminfs1}
\end{align}
where $\gamma=\lim_{N\to\infty}(\sum_{k=1}^{N}\frac{1}{k}-\log N)$ is the Euler-Mascheroni constant, and $\lambda$ is given by \eqref{def:lclambda}. For each $\vec{\theta}\in\Theta$, the quantity $\frac{1}{\pi}(\gamma + \log\left(\frac{8}{\pi}\right)+\Lambda(\vec{\theta}))$ is a limit point of the sequence \eqref{normsecordseqse1}.
\end{theorem}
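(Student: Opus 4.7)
The plan is to combine the binary decomposition formula \eqref{eq:descUnan} for $U_{N,1}(a_N)$ with a sharp second-order asymptotic for the minimum $1$-potential generated by equally spaced points on $S^{1}$. Writing $N=2^{n_{1}}+\cdots+2^{n_{p}}$ as in \eqref{binrep}, the formula expresses
\[
U_{N,1}(a_N)=\sum_{k=1}^{p} V_{2^{n_{k}}},
\]
where $V_{2^{m}}$ denotes the minimum over $S^{1}$ of the Riesz $1$-potential generated by $2^{m}$ equally spaced points (attained at a midpoint between two adjacent points). The problem then splits into two tasks: obtain a second-order expansion for $V_{2^{m}}$, and recombine the pieces so as to identify a $\Lambda$-contribution.

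For the first task, placing the evaluation point at the midpoint between two consecutive roots yields the trigonometric identity $V_{2^{m}}=\sum_{j=1}^{2^{m-1}}\csc\bigl((2j-1)\pi/2^{m+1}\bigr)$. Writing $\csc(\pi x)=1/(\pi x)+\eta(x)$ with $\eta$ smooth on $[0,1/2]$, the singular part is summed exactly using $\sum_{j=1}^{N}1/(2j-1)=H_{2N}-\tfrac{1}{2}H_{N}=\tfrac{1}{2}(\log(2N)+\gamma)+O(1/N)$, which contributes the $\gamma$; the regular part is a midpoint Riemann sum for $\int_{0}^{1/2}\eta$, whose antiderivative $\tfrac{1}{\pi}\log\!\bigl|\tan(\pi x/2)/x\bigr|$ evaluates to $\tfrac{1}{\pi}\log(4/\pi)$. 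Combining $\log 2$ from the singular part and $\log(4/\pi)$ from the integral produces $\log(8/\pi)$, and one obtains the expansion
\[
V_{2^{m}}=\frac{2^{m} m\log 2}{\pi}+\frac{2^{m}}{\pi}\!\left(\gamma+\log\!\tfrac{8}{\pi}\right)+O(1),
\]
with a uniform $O(1)$ remainder in $m$.

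For the second task, substitute into the decomposition, set $\theta_{k}:=2^{n_{k}}/N$, and use $n_{k}\log 2=\log N+\log\theta_{k}$ to get $\sum_{k}n_{k}2^{n_{k}}\log 2 = N\log N+N\Lambda(\vec{\theta}_{N})$, so that
\[
U_{N,1}(a_N)=\frac{N\log N}{\pi}+\frac{N}{\pi}\Lambda(\vec{\theta}_{N})+\frac{N}{\pi}\!\left(\gamma+\log\!\tfrac{8}{\pi}\right)+O(p),
\]
where $\vec{\theta}_{N}=(2^{n_{1}}/N,\dots,2^{n_{p}}/N)\in\Theta_{p}\subset\Theta$ by the alternative description \eqref{eq:altdefvectheta}. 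Since $p=\tau_{b}(N)\leq\log_{2}N+1$, dividing by $N$ yields
\[
\frac{U_{N,1}(a_N)-\tfrac{1}{\pi}N\log N}{N}=\frac{1}{\pi}\!\left(\gamma+\log\!\tfrac{8}{\pi}\right)+\frac{1}{\pi}\Lambda(\vec{\theta}_{N})+o(1).
\]
Boundedness of the sequence \eqref{normsecordseqse1} then follows from the boundedness of $\Lambda$ on $\Theta$ asserted in Lemma~\ref{lembound}. For \eqref{eq:secordlimsups1}, note that $\Lambda(\vec{\theta})=\sum\theta_{k}\log\theta_{k}\leq 0$ with equality at $\vec{\theta}=(1,0,\dots,0)$, and this equality is attained along $N=2^{m}$ where $\vec{\theta}_{N}=(1)$. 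For \eqref{eq:secordliminfs1}, any sequence realizing the infimum $\lambda=\inf_{\Theta}\Lambda$ produces the $\liminf$ by continuity of $x\mapsto x\log x$ at $0$: given $\vec{\theta}\in\Theta$, the defining sequence $\mathcal{N}$ from \eqref{eq:thetadef:2} makes $\vec{\theta}_{N}\to\vec{\theta}$ componentwise, hence $\Lambda(\vec{\theta}_{N})\to\Lambda(\vec{\theta})$; this simultaneously proves the limit-point assertion and, by choosing $\vec{\theta}$ approaching the infimum, the $\liminf$ equality.

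The main technical obstacle is the second-order expansion of $V_{2^{m}}$ with the explicit constant $\gamma+\log(8/\pi)$ and a remainder that is \emph{uniform} in $m$. The exact constant requires cancellation between the harmonic-sum asymptotics $H_{2N}-\tfrac{1}{2}H_{N}$ and the evaluation of the antiderivative $\tfrac{1}{\pi}\log|\tan(\pi x/2)/x|$ at the endpoints of $[0,1/2]$. Uniformity of the $O(1)$ remainder is essential because the error propagates additively over the $p=O(\log N)$ binary pieces, so a weaker $o(2^{m})$ bound would leave an uncontrolled $o(N\log N)$ term inside the normalization by $N$.
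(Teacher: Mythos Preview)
Your proof is correct and takes a genuinely different route from the paper's. The paper obtains the second-order expansion of $\mathcal{U}_{1}(N)$ by combining \eqref{calUsNLsN} with the Brauchart--Hardin--Saff expansion of $\mathcal{L}_{1}(N)$, yielding only $\mathcal{T}(N)\to\frac{1}{\pi}(\gamma+\log(8/\pi))$, i.e.\ an $o(N)$ remainder. With that weaker information the paper must argue via convergent subsequences, splitting $S_{N,1}+S_{N,2}$ and passing to sub-subsequences in $\Theta_{M}$ to control the tail. You instead derive the expansion of $V_{2^{m}}=\mathcal{U}_{1}(2^{m})$ directly and elementarily (harmonic sum plus midpoint Riemann sum for the regular part of $\csc(\pi x)$), obtaining the sharper uniform $O(1)$ remainder. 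This buys you the clean structural identity
\[
\frac{U_{N,1}(a_N)-\tfrac{1}{\pi}N\log N}{N}=\frac{1}{\pi}\Bigl(\gamma+\log\tfrac{8}{\pi}\Bigr)+\frac{1}{\pi}\Lambda(\vec{\theta}_{N})+O\!\left(\tfrac{\log N}{N}\right),
\]
from which the $\limsup$, $\liminf$, limit-point, boundedness, and divergence claims all drop out by elementary properties of $\Lambda$ on $\Theta$, without any subsequence machinery. Two minor remarks: your intermediate formula $H_{2N}-\tfrac{1}{2}H_{N}=\tfrac{1}{2}(\log(2N)+\gamma)+O(1/N)$ is off by $\tfrac{1}{2}\log 2$ (the correct leading part is $\tfrac{1}{2}\log(4N)+\tfrac{\gamma}{2}$), though your final constant $\gamma+\log(8/\pi)$ is right; and your claim that the $O(1)$ remainder is \emph{essential} overstates matters, since summing errors $\epsilon_{n_{k}}2^{n_{k}}$ with $\epsilon_{m}\to 0$ over the binary digits still gives $o(N)$ by a tail-splitting argument, which is exactly how the paper manages with only $o(2^{m})$.
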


Unlike in the cases $0<s<1$ and $s=1$, the first-order normalized sequence \eqref{seqUnsansg1} is divergent, as our next result demonstrates. 

\begin{theorem}\label{theo:foasg1}
Let $s>1$, and let $(a_{n})_{n=0}^{\infty}$ be a greedy $s$-energy sequence on $S^{1}$. Then the sequence
\begin{equation}\label{seqUnsansg1}
\left(\frac{U_{N,s}(a_{N})}{N^{s}}\right)_{N=1}^{\infty}
\end{equation}
is bounded and divergent, and we have
\begin{align}
\limsup_{N\rightarrow\infty}\frac{U_{N,s}(a_{N})}{N^{s}} & =(2^{s}-1)\frac{2\zeta(s)}{(2\pi)^{s}},\label{asymplimsupUsg1}\\
\liminf_{N\rightarrow\infty}\frac{U_{N,s}(a_{N})}{N^{s}} & =\underline{g}(s)(2^{s}-1)\frac{2\zeta(s)}{(2\pi)^{s}}.\label{asympliminfUsg1}
\end{align}
For each $\vec{\theta}\in\Theta$, the value $G(\vec{\theta};s)(2^{s}-1)\frac{2\zeta(s)}{(2\pi)^{s}}$ is a limit point of the sequence \eqref{seqUnsansg1}.
\end{theorem}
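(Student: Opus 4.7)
The approach relies on the identity \eqref{eq:descUnan}, which expresses $U_{N,s}(a_N)$ as a sum of block contributions indexed by the binary digits of $N=2^{n_1}+\cdots+2^{n_p}$. The plan is to reduce the problem to the asymptotics of a single block of $2^n$ equally-spaced points, then combine the $p$ blocks and pass to limits.

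I would first treat the single-block case $N=2^m$. By the Bia\l{}as-Cie\.z--Calvi description, $\alpha_{2^m,s}$ consists of the $2^m$-th roots of unity up to a global rotation, so by symmetry the minimizer $a_{2^m}$ sits midway between two adjacent points. A direct calculation gives
\[
U_{2^m,s}(a_{2^m})=\frac{1}{2^s}\sum_{j=0}^{2^m-1}\frac{1}{\sin^s\!\bigl(\pi(2j-1)/2^{m+1}\bigr)}.
\]
For $s>1$ the sum is dominated by the small-angle terms; replacing $\sin x$ by $x$ and using the identity $\sum_{j\ge 1}(2j-1)^{-s}=(1-2^{-s})\zeta(s)$ should yield
\[
\lim_{m\to\infty}\frac{U_{2^m,s}(a_{2^m})}{2^{ms}}=\frac{2(2^s-1)\zeta(s)}{(2\pi)^s},
\]
the value appearing in \eqref{asymplimsupUsg1}.

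Next I would extend this to general $N$. From \eqref{eq:descUnan}, $U_{N,s}(a_N)$ splits into $p$ pieces whose $k$-th term behaves like the single-block quantity $U_{2^{n_k},s}(a_{2^{n_k}})\sim (2^{n_k})^s\cdot 2(2^s-1)\zeta(s)/(2\pi)^s$. Dividing by $N^s$ and setting $\theta_k^{(N)}:=2^{n_k}/N$ gives
\[
\frac{U_{N,s}(a_N)}{N^s}=G\!\bigl(\vec\theta^{(N)};s\bigr)\cdot\frac{2(2^s-1)\zeta(s)}{(2\pi)^s}+o(1).
\]
From this everything follows. Since $s>1$ and $\theta_k\in[0,1]$ with $\sum\theta_k\leq 1$, one has $G(\vec\theta;s)\leq 1$ with equality exactly when $\vec\theta=(1,0,\ldots)$, giving \eqref{asymplimsupUsg1} and boundedness; taking the infimum over $\Theta$ yields \eqref{asympliminfUsg1}; and for each $\vec\theta\in\Theta$ the subsequence in the definition of $\Theta$ produces the claimed limit point, from which divergence follows.

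The main technical difficulty I anticipate is extending the single-block asymptotic to general $N$ with uniform error control. When the binary expansion of $N$ spreads over scales $n_1\gg n_2\gg\cdots\gg n_p$, the smaller blocks contribute extra terms to the potential evaluated at the minimizer $a_N$, which lies in a very small gap dictated by the finest scale. One must show that the cross-scale contributions are negligible after normalization by $N^s$. This should hinge on the uniform tail bound $\sum_{j\ge J}(2j-1)^{-s}=O(J^{1-s})$ valid for $s>1$, together with the precise location of $a_N$ inside the gap structure of $\alpha_{N,s}$ determined by \cite{BiaCal}.
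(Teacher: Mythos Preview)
Your overall strategy coincides with the paper's: use \eqref{eq:descUnan} to write
\[
\frac{U_{N,s}(a_N)}{N^s}=\sum_{k=1}^{p}\mathcal{W}_s(2^{n_k})\Bigl(\frac{2^{n_k}}{N}\Bigr)^{s},\qquad \mathcal{W}_s(m):=\frac{\mathcal{U}_s(m)}{m^s},
\]
establish $\mathcal{W}_s(m)\to L:=(2^s-1)\frac{2\zeta(s)}{(2\pi)^s}$ (your single-block step; the paper gets it from \eqref{calUsNLsN} and the asymptotics of $\mathcal{L}_s$), and then read off limsup, liminf and limit points from the behaviour of $G(\vec\theta;s)$.

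Where you go astray is in locating the difficulty. The identity \eqref{eq:descUnan} is \emph{exact}: its $k$-th term is $\mathcal{U}_s(2^{n_k})$, the potential of $2^{n_k}$ equally-spaced points evaluated at \emph{their own} midpoint. There are no ``cross-scale contributions'' whatsoever, the precise location of $a_N$ inside the gap structure plays no further role once \eqref{eq:descUnan} is invoked, and your proposed tool $\sum_{j\ge J}(2j-1)^{-s}=O(J^{1-s})$ is irrelevant here. The genuine issue is the uniformity of your displayed $o(1)$: the asymptotic $\mathcal{W}_s(2^{n_k})\to L$ requires $n_k\to\infty$, but for a given $N$ many exponents may stay bounded (e.g.\ $n_p=0$), and the number of terms $p=\tau_b(N)$ is itself unbounded. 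The paper handles this by truncation: fix $M$ with $\sum_{k\ge M}2^{-sk}<\epsilon$, split the sum at index $M$, bound the tail by $|S_{N,2}|\le C\sum_{k>M}(2^{n_k}/N)^s\le C\sum_{k>M}2^{-s(k-1)}<C\epsilon$ using only the boundedness of $(\mathcal{W}_s)$ and the elementary inequality $2^{n_k}/N\le 2^{-(k-1)}$, and on the head $S_{N,1}$ pass to a subsequence along which $2^{n_k}/\widetilde N\to\theta_k$ for $k\le M$, where $\widetilde N=2^{n_1}+\cdots+2^{n_M}$. This splitting is precisely what your $o(1)$ needs, and it is the only substantive analytic step in the proof.
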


All previous results concern greedy energy sequences with a single initial point $a_{0}$ and all subsequent points $a_{n}$, $n\geq 1$, satisfying the optimality condition, i.e. minimizing the potential. It is also natural to consider greedy sequences in which the optimality condition is required to hold starting from the point $a_{p+1}$, for some fixed $p\geq 1$. Then the first $p+1$ points $a_{0},\ldots,a_{p}$ are viewed as given initial input.

Let 
\begin{equation}\label{def:kernelks}
k_{s}(z,w):=\begin{cases}
\log\frac{1}{|z-w|} & s=0,\\
\frac{1}{|z-w|^{s}} & s>0.
\end{cases}
\end{equation}
Let $p\geq 1$ and $a_{0},\ldots,a_{p}$ be $p+1$ distinct points on the unit circle. We shall say that $(a_{n})_{n=0}^{\infty}$ is a greedy $s$-energy sequence on $S^{1}$ with initial set $(a_{0},\ldots,a_{p})$ if 
\begin{equation}\label{eq:optgencase}
\sum_{k=0}^{n-1} k_{s}(a_{n},a_{k})=\inf_{z\in S^{1}}\sum_{k=0}^{n-1}k_{s}(z,a_{k}),\qquad n\geq p+1.
\end{equation}
In this setting, we shall use the same notations $\alpha_{N,s}=(a_{0},\ldots,a_{N-1})$, $U_{N,s}(z)=\sum_{k=0}^{N-1} k_{s}(z,a_{k})$, and $E_{s}(\alpha_{N,s})=\sum_{0\leq i\neq j\leq N-1}k_{s}(a_{i},a_{j})$ employed before. The method used to obtain our previous results is not applicable for this more general class of sequences since the connection with binary representations of natural numbers is lost. However, using standard potential theoretic arguments we can prove the following result in the range $0\leq s<1$ (the positive $s$-capacity range for $S^{1}$). 

\begin{theorem}\label{theo:genseq}
Let $0\leq s<1$, and let $(a_{n})_{n=0}^{\infty}$ be a greedy $s$-energy sequence on the unit circle with initial set of distinct points $(a_{0},\ldots,a_{p})$, $p\geq 1$. Then
\begin{equation}\label{eq:genseq}
\lim_{N\rightarrow\infty}\frac{E_{s}(\alpha_{N,s})}{N^{2}}=\lim_{N\rightarrow\infty}\frac{U_{N,s}(a_{N})}{N}=I_{s}(\sigma),
\end{equation}
where $I_{0}(\sigma)=0$. The sequence $(a_{n})_{n=0}^{\infty}$ is uniformly distributed, i.e., 
\begin{equation}\label{eq:unifdist}
\frac{1}{N}\sum_{k=0}^{N-1}\delta_{a_{k}}\longrightarrow \sigma,\qquad N\rightarrow\infty,
\end{equation}
in the weak-star topology. 
\end{theorem}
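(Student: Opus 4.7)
The plan is to run a standard potential-theoretic argument (greedy property via rotational invariance for an upper bound; weak-star compactness plus lower semicontinuity of $I_{s}$ for a matching lower bound), structured so that the arbitrary initial tuple $(a_{0},\ldots,a_{p})$ only contributes a bounded error. The three conclusions (energy asymptotics, potential asymptotics, weak-star convergence) will be obtained in a coupled way using the truncated kernel $k_{s}^{M}:=\min(k_{s},M)$.

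\textbf{Step 1 (upper bounds).} For every $n\geq p+1$, the greedy condition \eqref{eq:optgencase} together with the rotational invariance of $\sigma$ gives
\[
U_{n,s}(a_{n})\;\leq\;\int_{S^{1}} U_{n,s}(z)\,d\sigma(z)\;=\;n\,I_{s}(\sigma),
\]
since $\int_{S^{1}}k_{s}(z,w)\,d\sigma(z)=I_{s}(\sigma)$ is independent of $w\in S^{1}$. This already yields $U_{N,s}(a_{N})/N\leq I_{s}(\sigma)$ for $N\geq p+1$. Summing the recursion $E_{s}(\alpha_{n+1,s})=E_{s}(\alpha_{n,s})+2U_{n,s}(a_{n})$ from $n=p+1$ to $N-1$ gives $E_{s}(\alpha_{N,s})\leq E_{s}(\alpha_{p+1,s})+I_{s}(\sigma)(N^{2}-(p+1)^{2})$, so $\limsup_{N}E_{s}(\alpha_{N,s})/N^{2}\leq I_{s}(\sigma)$.

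\textbf{Step 2 (energy lower bound and weak-star convergence).} Let $\mu_{N}:=N^{-1}\sum_{k=0}^{N-1}\delta_{a_{k}}$ and let $\mu$ be a weak-star limit of some subsequence $(\mu_{N_{j}})$. Separating diagonal terms,
\[
\frac{E_{s}(\alpha_{N,s})}{N^{2}}\;\geq\;\iint k_{s}^{M}\,d(\mu_{N}\otimes\mu_{N})-\frac{M}{N}.
\]
Since $k_{s}^{M}\in C(S^{1}\times S^{1})$ (for $s=0$ we use that $k_{0}$ is bounded below on $S^{1}\times S^{1}$), weak-star convergence of the product measures gives $I_{s}^{M}(\mu):=\iint k_{s}^{M}\,d(\mu\otimes\mu)\leq I_{s}(\sigma)$ for every $M$. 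Monotone convergence as $M\to\infty$ yields $I_{s}(\mu)\leq I_{s}(\sigma)$. Uniqueness of the minimizer of $I_{s}$ over $\mathcal{P}(S^{1})$ (valid for $0\leq s<1$) forces $\mu=\sigma$, so every subsequence of $(\mu_{N})$ has a further subsequence converging to $\sigma$. This gives \eqref{eq:unifdist} and also $\lim_{N}E_{s}(\alpha_{N,s})/N^{2}=I_{s}(\sigma)$.

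\textbf{Step 3 (potential lower bound).} From $U_{N,s}(a_{N})\geq N\min_{z\in S^{1}}\int k_{s}^{M}(z,w)\,d\mu_{N}(w)$, the continuity of $k_{s}^{M}$ on the compact product makes the family $\{k_{s}^{M}(z,\cdot):z\in S^{1}\}$ equicontinuous in $C(S^{1})$. Combined with the weak-star convergence from Step 2, this upgrades the pointwise convergence $\int k_{s}^{M}(z,\cdot)\,d\mu_{N}\to\int k_{s}^{M}(z,\cdot)\,d\sigma=I_{s}^{M}(\sigma)$ to uniform convergence in $z$, so $\liminf_{N}U_{N,s}(a_{N})/N\geq I_{s}^{M}(\sigma)$. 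Letting $M\to\infty$ and invoking monotone convergence to identify $\lim_{M}I_{s}^{M}(\sigma)=I_{s}(\sigma)$ closes the gap with the upper bound from Step 1. The main subtlety I anticipate lies in correctly ordering the two limits in Step 3 (first $N\to\infty$ with $M$ fixed, then $M\to\infty$) and in recording the equicontinuity explicitly; once that is done, the $s=0$ case requires only the remark that $k_{0}\geq -\log 2$ on $S^{1}\times S^{1}$, which makes the truncation argument identical to the Riesz case.
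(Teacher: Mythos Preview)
Your argument is correct and is a genuinely different route from the paper's. For the energy limit and the weak-star convergence, the paper simply invokes the trivial bound $E_{s}(\omega_{N})\leq E_{s}(\alpha_{N,s})$ (with $\omega_{N}$ the $N$-th roots of unity) together with the known asymptotics of $E_{s}(\omega_{N})$, and then cites \cite[Thm.~4.4.9]{BorHarSaff} to obtain \eqref{eq:unifdist}; you instead run the truncated-kernel/lower-semicontinuity argument directly and appeal to uniqueness of the equilibrium measure, which is more self-contained. The sharper contrast is in the potential limit: the paper exploits the monotonicity $U_{n,s}(a_{n})\leq U_{n+1,s}(a_{n+1})$ for $s>0$ (respectively the quasi-monotonicity $U_{n,0}(a_{n})\leq U_{n+1,0}(a_{n+1})+\log 2$ for $s=0$) and, assuming $U_{m,s}(a_{m})/m<I_{s}(\sigma)-\epsilon$ for infinitely many $m$, derives a refined upper bound on $E_{s}(\alpha_{m+1,s})$ depending on a free cutoff $\kappa_{m}$, which it then optimizes to contradict the energy asymptotics. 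Your route avoids this monotonicity/optimization entirely by passing through the truncated potential and using equicontinuity (equivalently, uniform continuity of $k_{s}^{M}$ on $S^{1}\times S^{1}$) to upgrade weak-star convergence of $\mu_{N}$ to uniform convergence of $z\mapsto\int k_{s}^{M}(z,\cdot)\,d\mu_{N}$. One minor wording point: the equicontinuity you actually use to get equicontinuity of the potentials $f_{N}(z)=\int k_{s}^{M}(z,\cdot)\,d\mu_{N}$ is that of the family $\{k_{s}^{M}(\cdot,w):w\in S^{1}\}$ (equicontinuity in the $z$-variable), though of course both follow from the same uniform continuity of $k_{s}^{M}$. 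Your approach buys a uniform treatment of $0\leq s<1$ with no case split; the paper's approach is more elementary in that it never touches weak-star compactness or lower semicontinuity of $I_{s}$, but at the cost of citing external results and handling $s=0$ separately.
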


In the logarithmic case $s=0$, discrepancy properties of these sequences were studied by Steinerberger \cite{St}, proving in particular \eqref{eq:unifdist}. We finally mention some open problems for this more general class of sequences.

L\'{o}pez-Garc\'{i}a and Wagner \cite{LopWag} showed that for Leja sequences with a single initial point (i.e. $s=0$, $p=0$) we have
\[
0\leq \frac{E_{0}(\alpha_{N,0})+N\log N}{N}<\log(4/3),\qquad N\geq 2,
\]
which implies in particular $E_{0}(\alpha_{N})\thicksim -N\log N$. 

\begin{problem}
Let $(a_{n})_{n=0}^{\infty}$ be a Leja sequence on $S^{1}$ with more than one initial point (i.e. $s=0$, $p\geq 1$). Is it true that $E_{0}(\alpha_{N,0})=-N\log N+O(N)$? If $P_{N}(z)=\prod_{k=0}^{N-1}(z-a_{k})$ and $\|P_{N}\|$ is the $L^{\infty}$ norm on $S^{1}$, is it true that $\log\|P_{N}\|=O(\log N)$ and $\limsup_{N\rightarrow\infty}\log\|P_{N}\|/\log N=1$?  
\end{problem}

Let $s>0$ and let $(a_{n})_{n=0}^{\infty}$ be a greedy $s$-energy sequence with $p\geq 1$. We also expect in this setting that the sequences constructed as in \eqref{eq:secondorderunan} (when $0<s<1$), \eqref{seqUnsansg1} (when $s>1$), and \eqref{normsecordseqse1} (when $s=1$), are bounded and divergent, the latter property implying in particular that the limit \eqref{eq:firstorders1} is valid.  

\begin{problem}
Assume that $p\geq 1$. Describe in this situation the asymptotic behavior of the sequences \eqref{eq:secondorderunan} for $0<s<1$, \eqref{normsecordseqse1} for $s=1$, and \eqref{seqUnsansg1} for $s>1$.
\end{problem}

This paper is organized as follows. In Section~\ref{sec:logres} we prove Theorem~\ref{theo:logcase}. In Section~\ref{sec:defauxres} we introduce some important definitions and obtain several auxiliary results, including the key formula \eqref{eq:descUnan}. In Sections~\ref{sec:potcase}, \ref{sec:sequalone}, and \ref{sec:sgone}, we concentrate the asymptotic analysis of the cases $0<s<1$, $s=1$, and $s>1$ respectively, for sequences with a single initial point. In Section~\ref{sec:genseq} we prove Theorem~\ref{theo:genseq}.

\section{The logarithmic case: Proof of Theorem~\ref{theo:logcase}}\label{sec:logres}

Let $(a_{n})_{n=0}^{\infty}$ be a Leja sequence on the unit circle. Theorem~\ref{theo:logcase} will be a simple application of the following beautiful identity from \cite[Lemma 4]{CalManh}:
\begin{equation}\label{eq:CalManh}
\|P_{N}\|=\prod_{k=0}^{N-1}|a_{N}-a_{k}|=2^{\tau_{b}(N)},\qquad N\geq 1,
\end{equation}
where $\|P_{N}\|$ is the uniform norm on $S^{1}$ of the polynomial $P_{N}(z)=\prod_{k=0}^{N-1}(z-a_{k})$. 

\smallskip

\noindent\textit{Proof of Theorem}~\ref{theo:logcase}. For every integer $N\geq 1$, we have
\begin{equation}\label{keyineq}
N\geq 2^{\tau_{b}(N)}-1.
\end{equation}
Indeed, if $N$ is as indicated in \eqref{binrep}, then $n_{k}\geq p-k$ for each $1\leq k\leq p$, and so
\[
N=\sum_{k=1}^{p} 2^{n_{k}}\geq \sum_{k=1}^{p} 2^{p-k}=2^{p}-1=2^{\tau_{b}(N)}-1.
\]
Applying \eqref{eq:CalManh} and \eqref{keyineq} we get
\begin{equation}\label{eq:1}
\log \|P_{N}\|=\log (2^{\tau_{b}(N)})\leq \log(N+1).
\end{equation}
This justifies the second inequality in \eqref{ineq:theo1}. Clearly, equality in \eqref{eq:1} holds if and only if $N=2^{m}-1$, $m\geq 1$. It follows from \eqref{eq:CalManh} that $\log \|P_{N}\|=\tau_{b}(N) \log 2>0$. So \eqref{ineq:theo1} is justified. 

If we take the subsequence $N(r)=2^{r}$, then $\tau_{b}(N(r))=1$ for all $r$, hence
\[
\lim_{r\rightarrow\infty}\frac{\log \|P_{N(r)}\|}{\log (N(r)+1)}=\lim_{r\rightarrow\infty}\frac{\log 2}{\log(2^{r}+1)}=0
\]
so \eqref{liminfsupnorm} is justified.

For any $m\geq 1$, we have $\tau_{b}(m)=\tau_{b}(2m)$. Hence $\tau_{b}(2^{k} N)=\tau_{b}(N)$ for all $k\geq 0$, 
and so $\log \|P_{2^{k}N}\|=\log(2^{\tau_{b}(2^{k}N)})=\log(2^{\tau_{b}(N)})=\log\|P_{N}\|$ for all $k\geq 0$, i.e., the numerator in \eqref{eq:specsubs} is constant.\qed

\begin{figure}
\centering
\includegraphics[width=\linewidth]{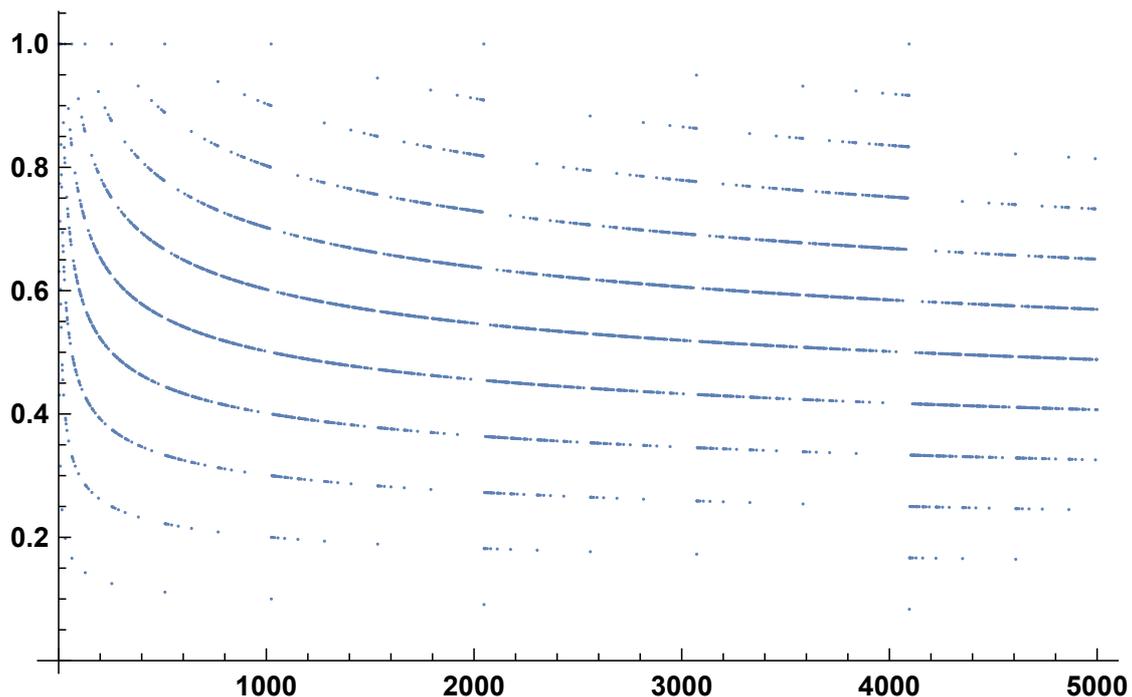}
\caption[]{Plot of the sequence $(\log\|P_{N}\|/\log(N+1))$ for $1 \leq N \leq 5000$.}
\label{fig1}
\end{figure}

\bigskip

Theorem~\ref{theo:logcase} is related to the following problem of P. Erd\H{o}s, which he formulated as Problem 21 in \cite{Erdos}. For a sequence $(z_{k})_{k=1}^{\infty}$ of complex numbers (not necessarily distinct) on the unit circle, let 
\[
A_{n}=A_{n}(z_{1},\ldots,z_{n}):=\max_{|z|=1}\prod_{k=1}^{n}|z-z_{k}|.
\] 
Is the sequence $(A_{n})_{n=1}^{\infty}$ unbounded for any such sequence $(z_{k})_{k=1}^{\infty}$? Erd\H{o}s conjectured that the answer is affirmative, and also asked to estimate how fast the sequence $\max_{1\leq n\leq N} A_{n}$ diverges to infinity. Wagner \cite{Wagner} managed to prove the conjecture, and gave a quantitative estimate of the rate of growth of $\max_{1\leq n\leq N}A_{n}$. His estimate was improved by Beck \cite{Beck}, who showed that 
\[
\max_{1\leq n\leq N} A_{n}>N^{c}
\]    
where $c>0$ is an absolute constant. 

Let $\{x_{k}\}_{k=1}^{\infty}\subset[0,1)$ be the van der Corput sequence, that is, the sequence defined inductively by the formula
\[
x_{2^{k}+l}=2^{-k-1}+x_{l},\qquad 1\leq l\leq 2^{k},\quad k\geq 1,
\]
starting from $x_{1}=0$, $x_{2}=1/2$. Erd\H{o}s also observed (see \cite[Problem 4.1]{HayLin}) that if one maps this sequence to the unit circle to obtain the sequence $z_{k}=e^{2\pi i x_{k}}$ (particular example of Leja sequence on $S^{1}$), then one has $A_{n}\leq n+1$ with equality if and only if $n=2^{k}-1$ for some integer $k$. Erd\H{o}s asked whether it was possible to construct other sequences $(z_{k})_{k=1}^{\infty}$ for which $A_{n}$ can increase more slowly than the indicated linear estimate. Linden \cite{Linden} proved that this is possible, constructing a sequence $(z_{k})_{k=1}^{\infty}$ on the unit circle for which
\[
\limsup_{n\rightarrow\infty}\frac{\log A_{n}}{\log n}<1.
\]    

\section{Some definitions and auxiliary results}\label{sec:defauxres}

In this section we prove some identities that are needed for our asymptotic analysis later. The most crucial is the representation of $U_{N,s}(a_{N})$ in terms of the binary expansion of $N$.

\begin{proposition} \label{prop:sdinequality}
Let $0<s<1$, and let $(a_n)_{n=0}^{\infty}$ be a greedy $s$-energy sequence on $S^1$. We have $U_{N,s}(a_N)<N I_{s}(\sigma)$ for all $N\geq 1$.
\end{proposition}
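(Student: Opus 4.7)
The plan is to obtain the inequality by averaging the greedy minimization property against the equilibrium measure $\sigma$, and then upgrade it to a strict inequality using continuity of the Riesz potential.

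First I would recall the defining greedy property: $U_{N,s}(a_N)=\inf_{z\in S^{1}}U_{N,s}(z)$, so $U_{N,s}(a_N)\leq U_{N,s}(z)$ for every $z\in S^{1}$. Integrating this inequality against $\sigma$ (the normalized arc-length measure, which is also the $s$-equilibrium measure for $0<s<1$) yields
\[
U_{N,s}(a_N)\leq \int_{S^{1}} U_{N,s}(z)\,d\sigma(z)=\sum_{k=0}^{N-1}\int_{S^{1}}\frac{d\sigma(z)}{|z-a_{k}|^{s}}.
\]
By rotation invariance of $\sigma$, the integral $\int_{S^{1}}|z-w|^{-s}\,d\sigma(z)$ does not depend on $w\in S^{1}$, so it equals its own average over $\sigma$, which is $I_{s}(\sigma)$. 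Therefore each summand equals $I_{s}(\sigma)$ and we obtain the weak inequality $U_{N,s}(a_N)\leq NI_{s}(\sigma)$.

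To upgrade to a strict inequality, I would argue that equality in the first step would force $U_{N,s}(z)=U_{N,s}(a_N)$ for $\sigma$-almost every $z\in S^{1}$. But $U_{N,s}$ is continuous on the open set $S^{1}\setminus\{a_{0},\ldots,a_{N-1}\}$ (which has full $\sigma$-measure) and satisfies $\lim_{z\to a_{k}}U_{N,s}(z)=+\infty$ for each $k$, so $U_{N,s}$ cannot be constant on any full-$\sigma$-measure subset. Hence the integral inequality must be strict, giving $U_{N,s}(a_N)<NI_{s}(\sigma)$.

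No serious obstacle is expected here: the only subtlety is justifying that $\sigma$-integration of the potential produces precisely $NI_{s}(\sigma)$, which is an immediate consequence of rotation invariance (or equivalently, of $\sigma$ being the $s$-equilibrium measure of $S^{1}$ for $0<s<1$). The strict inequality is essentially free from the non-constancy of a single nontrivial Riesz potential, so the proof is short and direct.
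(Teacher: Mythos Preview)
Your proposal is correct and follows essentially the same approach as the paper: integrate the pointwise inequality $U_{N,s}(a_N)\leq U_{N,s}(z)$ against $\sigma$ and use rotation invariance to obtain $\int U_{N,s}\,d\sigma=NI_{s}(\sigma)$. The paper writes the strict inequality directly in the integrated line, while you spell out the (equally valid) continuity/blow-up argument showing $U_{N,s}$ cannot be $\sigma$-a.e.\ constant; the two are the same proof.
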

\begin{proof}
We have
\[
U_{N,s}(a_N)=\int U_{N,s}(a_N)\,d\sigma(x)<\int U_{N,s}(x)\,d\sigma(x) =\sum_{k=0}^{N-1}\int \frac{1}{|a_k - x|^{s}}\,d\sigma(x) = N I_{s}(\sigma),
\]
where the final equality follows from the fact that, by symmetry, $\int |a_k - x|^{-s}\,d\sigma(x) = I_{s}(\sigma)$.
\end{proof}

The optimal value \eqref{eq:minenerg} for $K=S^{1}$ will be denoted by
\[
\mathcal{L}_{s}(N):=\inf\{E_{s}(\omega): \omega\in (S^{1})^{N}\},\qquad N\geq 2.
\]
Recall that this value is the $s$-energy of the collection of $N$-th roots of unity, which easily implies the identity 
\[
\mathcal{L}_{s}(N)=2^{-s} N \sum_{k=1}^{N-1} \left(\sin\frac{k\pi}{N}\right)^{-s}.
\]
We also set $\mathcal{L}_{s}(1):=0$.  

In this paper, an important role will be played by the quantities
\[
\mathcal{U}_{s}(N) := \sum_{k=1}^{N} \left|\exp(\pi i/N) - \exp(2\pi k i/N)\right|^{-s}.
\]
This is the Riesz potential generated by the $N$-th roots of unity $e_{k}=\exp(2\pi k i/N)$, $1\leq k\leq N$, evaluated at the midpoint of the short arc between $e_{N}$ and $e_{1}$.

\begin{proposition} \label{prop:lformula}
If $N \geq 2$ and $\{e_1,\ldots,e_{N}\}$ are the $N$-th roots of unity, then
\[
\sum_{k=2}^{N} \frac{1}{|e_k - e_1|^s} = \frac{\mathcal{L}_{s}(N)}{N}.
\]
\end{proposition}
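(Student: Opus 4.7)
The plan is to exploit the rotational symmetry of the $N$-th roots of unity. Since Götz \cite{Gotz} proved that every minimal $s$-energy configuration on $S^{1}$ consists of $N$ equally spaced points, the tuple $\omega = (e_{1},\ldots,e_{N})$ of $N$-th roots of unity is a minimizer, so
\[
\mathcal{L}_{s}(N) \;=\; E_{s}(\omega) \;=\; \sum_{i=1}^{N}\sum_{\substack{j=1 \\ j\neq i}}^{N}\frac{1}{|e_{i}-e_{j}|^{s}}.
\]

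Next I would observe that the inner sum is independent of $i$. Indeed, $e_{i}-e_{j}=e_{i}(1-e_{j-i})$, where indices are taken modulo $N$, so $|e_{i}-e_{j}|=|1-e_{j-i}|=|e_{1}-e_{j-i+1}|$ after an appropriate relabeling. As $j$ ranges over $\{1,\ldots,N\}\setminus\{i\}$, the index $j-i \pmod N$ ranges over $\{1,2,\ldots,N-1\}$, so
\[
\sum_{\substack{j=1 \\ j\neq i}}^{N}\frac{1}{|e_{i}-e_{j}|^{s}} \;=\; \sum_{k=2}^{N}\frac{1}{|e_{k}-e_{1}|^{s}}
\]
for every $1 \leq i \leq N$. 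Summing over $i$ produces a factor of $N$ and yields the claimed identity.

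There is essentially no obstacle here: the whole argument is the standard symmetry reduction for the energy of a transitive configuration, and it needs only Götz's characterization of minimizers (already quoted in the paper) together with the rotation invariance of the Euclidean distance on $S^{1}$. The only mild care is in noting that $\{(j-i) \bmod N : j \neq i\}$ equals $\{1,\ldots,N-1\}$, which is immediate.
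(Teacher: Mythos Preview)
Your argument is correct and is essentially the same symmetry reduction the paper uses: write $\mathcal{L}_{s}(N)$ as a double sum over ordered pairs and observe by rotational invariance that all $N$ inner sums are equal to the one anchored at $e_{1}$. You spell out the rotation $e_{i}-e_{j}=e_{i}(1-e_{j-i})$ explicitly, whereas the paper just says ``by symmetry'', but the content is identical.
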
 
\begin{proof}
Let $S=\{e_1,\ldots,e_{N}\}$. We have
\[
\mathcal{L}_{s}(N) = \sum_{k=1}^{N}\sum_{z \in S \setminus \{e_k\}} \frac{1}{|z - e_k|^{s}}.
\]
By symmetry, this implies
\[
\mathcal{L}_{s}(N) = N \sum_{k=2}^{N} \frac{1}{|e_k - e_1|^{s}},
\]
and dividing by $N$ yields the desired equality. 
\end{proof}

\begin{proposition} \label{prop:uInTermsOfL}
If $N \geq 1$, then for any $s > 0$, 
\begin{equation}\label{calUsNLsN}
\mathcal{U}_{s}(N) = \frac{\mathcal{L}_{s}(2N)}{2N} - \frac{\mathcal{L}_{s}(N)}{N}.
\end{equation}
\end{proposition}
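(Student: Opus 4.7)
The plan is to recognize that $\mathcal{U}_{s}(N)$ captures exactly the contribution from the $N$-th roots of unity to the analogous sum for the $2N$-th roots evaluated in Proposition~\ref{prop:lformula}. Concretely, I would set $f_{k}:=\exp(\pi k i/N)$ for $k=1,\ldots,2N$, so that $\{f_{1},\ldots,f_{2N}\}$ is the set of $2N$-th roots of unity, the even-indexed subcollection $\{f_{2j}\}_{j=1}^{N}$ is precisely the set of $N$-th roots of unity (since $f_{2j}=\exp(2\pi j i/N)$), and $f_{1}=\exp(\pi i/N)$ is the midpoint appearing in the definition of $\mathcal{U}_{s}(N)$. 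Applying Proposition~\ref{prop:lformula} to the $2N$-th roots of unity with distinguished point $f_{1}$ then yields
\[
\sum_{k=2}^{2N}|f_{k}-f_{1}|^{-s}=\frac{\mathcal{L}_{s}(2N)}{2N}.
\]

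Next I would split this sum according to the parity of $k$. The even-indexed terms $k=2j$, $j=1,\ldots,N$, give
\[
\sum_{j=1}^{N}|\exp(2\pi j i/N)-\exp(\pi i/N)|^{-s}=\mathcal{U}_{s}(N)
\]
by definition. For the odd-indexed terms $k=2j+1$, $j=1,\ldots,N-1$, the factorization $f_{2j+1}-f_{1}=f_{1}\bigl(\exp(2\pi j i/N)-1\bigr)$ together with $|f_{1}|=1$ gives $|f_{2j+1}-f_{1}|=|\exp(2\pi j i/N)-1|$, so this contribution equals the sum of reciprocal $s$-powers of distances from the fixed $N$-th root $1$ to the remaining $N-1$ roots of unity. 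By Proposition~\ref{prop:lformula} applied at $N$ (together with the obvious rotational symmetry that makes Proposition~\ref{prop:lformula} hold for any fixed choice of base root, not just $e_{1}$), this sum equals $\mathcal{L}_{s}(N)/N$ whenever $N\geq 2$; in the degenerate case $N=1$ the odd-indexed sum is empty and matches $\mathcal{L}_{s}(1)/1=0$ under the convention $\mathcal{L}_{s}(1)=0$.

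Combining the two pieces, I would conclude $\mathcal{L}_{s}(2N)/(2N)=\mathcal{U}_{s}(N)+\mathcal{L}_{s}(N)/N$, which is precisely \eqref{calUsNLsN} after rearrangement. I do not anticipate a genuine obstacle here: the argument is essentially bookkeeping of which $2N$-th roots of unity are $N$-th roots and which are not, together with one invocation of rotational symmetry of the $N$-th roots. The single step worth stating explicitly is that the odd-indexed $2N$-th roots form the coset $f_{1}\cdot\{\exp(2\pi j i/N):j=1,\ldots,N\}$, which is what collapses their contribution into a sum involving only $N$-th roots and thereby exposes the $\mathcal{L}_{s}(N)/N$ term.
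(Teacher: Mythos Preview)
Your proof is correct and follows essentially the same route as the paper: both label the $2N$-th roots by $e_{k}=\exp(\pi k i/N)$, identify $\mathcal{U}_{s}(N)$ with the even-indexed terms of $\sum_{k=2}^{2N}|e_{k}-e_{1}|^{-s}=\mathcal{L}_{s}(2N)/(2N)$, and recognize the odd-indexed remainder as $\mathcal{L}_{s}(N)/N$ via Proposition~\ref{prop:lformula}. Your explicit factorization $f_{2j+1}-f_{1}=f_{1}(\exp(2\pi j i/N)-1)$ and the handling of the $N=1$ case are nice touches that the paper leaves implicit.
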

\begin{proof}
Let $e_k = \exp(2\pi k i / 2N)$ for $1 \leq k \leq 2N$. By the definition of $\mathcal{U}_s(N)$, we 
evidently have
\begin{align*}
\mathcal{U}_{s}(N) = \sum_{k=1}^{N} \frac{1}{|e_{2k} - e_1|^{s}}
= \sum_{k=2}^{2N} \frac{1}{|e_k - e_1|^{s}} - \sum_{k=1}^{N-1}\frac{1}{|e_{2k+1} - e_1|^{s}}.
\end{align*}
By Proposition~\ref{prop:lformula}, this implies
\[
\mathcal{U}_{s}(N) = \frac{\mathcal{L}_{s}(2N)}{2N} - \frac{\mathcal{L}_{s}(N)}{N}.
\]
\end{proof}

If $(a_n)_{n=0}^{\infty}$ is a greedy $s$-energy sequence on $S^1$, it is obvious that the values  $U_{n,s}(a_n)$ will not be changed if we perform a rotation of the sequence. Hence, we may assume without loss of generality that the initial point of a greedy sequence is always the point $a_0=1$. In Proposition~\ref{prop:geodesc} below, we tacitly assume this condition. 

For an $M$-tuple $A = (a_1,a_2,\ldots,a_M)$ and an $N$-tuple $B = (b_1,b_2,\ldots,b_N)$, we define the concatenation $(A,B) := (a_1,a_2,\ldots,a_M,b_1,b_2,\ldots,b_N)$. In \cite{BiaCal}, Proposition~\ref{prop:geodesc} was proved for Leja sequences. But as it was remarked in \cite{LopWag}, this result holds for greedy $s$-energy sequences as well, for any value of $s>0$, since greedy $s$-energy sequences coincide geometrically with Leja sequences on $S^1$. We now state this result in terms of greedy $s$-energy sequences on $S^1$.

\begin{proposition} \label{prop:geodesc}
Let $s > 0$ and $(a_n)_{n=0}^{\infty}$ be a greedy $s$-energy sequence on $S^1$. Then
\begin{enumerate}
\item[$1)$] Any $2^n$-th section of $(a_n)$ is formed by the $2^n$-th roots of unity.
\item[$2)$] For any $n \geq 0$, let $\alpha_{2^{n+1}}$ be the $2^{n+1}$-th section of $(a_n)$. Then $\alpha_{2^{n+1}}$ 
contains $\alpha_{2^n}$ as its first $2^n$ points, and there exists a $2^n$-th root $\rho$ of $-1$ and a greedy $s$-energy sequence $({a}'_{n})_{n=0}^{\infty}$ on $S^1$ such that $\alpha_{2^{n+1}} = (\alpha_{2^n}, \rho {\alpha}'_{2^n})$. (Here, ${\alpha}'_{N}$ denotes the 
$N$-th section of $({a}'_{n})$.)
\item[$3)$] Iterating $2)$, we see that for any $N = 2^{n_1} + 2^{n_2} + \cdots + 2^{n_p}$, $n_1 > n_2 > \cdots > n_p \geq 0$, 
there exists for each integer $k$ with $1 \leq k \leq p$ a greedy $s$-energy sequence $(a_n^k)_{n=0}^{\infty} \subset S^1$ (with $a_0^k = 1$) such that 
\[
\alpha_{N} = (\alpha_{2^{n_1}}^{1}, \rho_1\alpha_{2^{n_2}}^{2}, \rho_1\rho_2\,\alpha_{2^{n_3}}^{3},\ldots,\left(\prod_{i=1}^{p-1} \rho_i\right)\alpha_{2^{n_p}}^{p})
\]
for some numbers $\rho_i$ that are $2^{n_i}$-th roots of $-1$. Following our usual convention, we have denoted the $N$-th section
of $(a_n^k)$ by $\alpha_N^k$. 
\end{enumerate}
\end{proposition}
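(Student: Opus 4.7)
I would prove parts 1) and 2) simultaneously by strong induction on $n$, and derive part 3) as a direct iteration of part 2). The base case $n=0$ is immediate since $\alpha_1=(1)$ and $a_1=-1$. For the inductive step, suppose every $2^n$-th section equals the $2^n$-th roots of unity. The potential
\[
F(z) := \sum_{k=0}^{2^n-1}\bigl|z - e^{2\pi i k/2^n}\bigr|^{-s}
\]
generated by the first $2^n$ points is invariant under multiplication by any $2^n$-th root of unity. Reflectional symmetry about the midpoint of the fundamental arc between two consecutive base points forces a critical point of $F$ there, and I would show that this midpoint is in fact the global minimum on the arc, using strict convexity of $\theta\mapsto|e^{i\theta}-e^{i\phi}|^{-s}$ away from the singularity together with the fact that $F$ blows up at the base points. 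Consequently $a_{2^n}$ must be some $2^n$-th root $\rho$ of $-1$.

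The next step is to identify $a_{2^n+1},\ldots,a_{2^{n+1}-1}$. Substituting $z=\rho w$ in $U_{2^n+j,s}(z)$, the greedy selection rule for $a_{2^n+j}$ becomes the minimization of
\[
h_j(w) \;=\; \tilde F(w) \;+\; \sum_{i=0}^{j-1}|w-c_i|^{-s}, \qquad c_i := \rho^{-1}a_{2^n+i},
\]
where $\tilde F$ is the potential generated by the $2^n$-th roots of $-1$ (the image of the $2^n$-th roots of unity under multiplication by $\rho^{-1}$, since $\rho^{2^n}=-1$). Three structural facts would drive the analysis: $\tilde F$ has the same rotational symmetry as $F$; its minima on $S^1$ are exactly the $2^n$-th roots of unity; and it takes a common value at all such points. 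A subsidiary induction on $j$ would then show that each $c_j$ is a $2^n$-th root of unity distinct from $c_0,\ldots,c_{j-1}$, and that the tuple $(c_0,\ldots,c_{2^n-1})$ coincides with the $2^n$-th section of some greedy $s$-energy sequence $(a'_n)$ with $a'_0=1$ (which exists because, on the finite set of $2^n$-th roots of unity, the constancy of $\tilde F$ makes the minimization of $h_j$ and that of $\sum_{i<j}|w-c_i|^{-s}$ agree). Once parts 1) and 2) are established at level $n+1$, part 3) follows by iterating the block decomposition at the successive scales $2^{n_1},2^{n_2},\ldots,2^{n_p}$, with the cumulative products $\prod_{i=1}^{k-1}\rho_i$ emerging naturally from repeatedly applying part 2) to the as-yet-unsplit tail of the section.

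The main obstacle is the step inside the subsidiary induction that shows the minimum of $h_j$ over the continuum $S^1$ is attained at a $2^n$-th root of unity rather than at an intermediate point. The already-placed points $c_0,\ldots,c_{j-1}$ break the rotational symmetry of $\tilde F$, so one must argue that the gap between $\tilde F(w)$ at a non-root-of-unity $w$ and at a root of unity always exceeds any possible savings in $\sum_{i<j}|w-c_i|^{-s}$. For Leja sequences this is handled in \cite{BiaCal} via the polynomial identity $\prod_{k=0}^{2^n-1}(w-\omega_k)=w^{2^n}-1$, which converts the question into an explicit statement about the modulus of a Chebyshev-like polynomial; in the Riesz-potential setting the same geometric conclusion must either be extracted directly from strict convexity and monotonicity of the kernel $|w-\zeta|^{-s}$, or, as noted in \cite{LopWag}, obtained by invoking the fact that greedy $s$-energy sequences and Leja sequences on $S^1$ share the same geometric structure and applying the Leja result of \cite{BiaCal} verbatim.
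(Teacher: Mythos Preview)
The paper does not actually prove this proposition. Immediately before stating it, the authors write that the result was proved in \cite{BiaCal} for Leja sequences and that, as remarked in \cite{LopWag}, it carries over to greedy $s$-energy sequences because on $S^1$ these sequences coincide geometrically with Leja sequences. That citation is the entire argument the paper offers.

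Your proposal is therefore more ambitious than what the paper does: you outline a direct inductive proof in the Riesz setting. The structure you describe (induction on $n$, identification of $a_{2^n}$ as a $2^n$-th root of $-1$ by symmetry, substitution $z=\rho w$ to reduce the tail to a new greedy problem, iteration for part 3)) is correct and is exactly the skeleton of the Bia\l as-Cie\.z--Calvi argument. You also correctly isolate the one genuinely nontrivial step: showing that once some of the midpoints have been occupied, the global minimum of the potential over the full circle is still attained at an unoccupied midpoint rather than at an interior point of a short arc. In the logarithmic case this falls out of the factorization $\prod_k(w-\omega_k)=w^{2^n}-1$; for general $s>0$ you do not supply a self-contained argument, and your final sentence falls back on invoking the equivalence with Leja sequences established in \cite{LopWag} and the result of \cite{BiaCal}. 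That fallback is precisely the paper's own ``proof''. So in the end your proposal and the paper agree; the additional direct material you sketch is sound as far as it goes, but the hard step you flag is left to the same references the paper cites.
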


We now express $U_{N,s}(a_N)$ in terms of the binary expansion of $N$. All our asymptotic analysis will be based on the identity \eqref{eq:descUnan}. 

\begin{proposition} \label{prop:descUnan}
Let $s>0$ be arbitrary, and let $(a_{n})_{n=0}^{\infty}$ be a greedy $s$-energy sequence on $S^{1}$. If $N = 2^{n_1} + \cdots + 2^{n_p}$, $n_1 > \cdots > n_p \geq 0$,  then 
\begin{equation} \label{eq:descUnan}
U_{N,s}(a_{N}) = \sum_{k=1}^{p}\mathcal{U}_{s}(2^{n_{k}}).
\end{equation}
\end{proposition}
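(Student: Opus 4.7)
The plan is to argue by induction on the number of binary digits $p = \tau_b(N)$, with the inductive statement asserting the formula simultaneously for every greedy $s$-energy sequence on $S^1$ (by rotation invariance we may assume $a_0 = 1$).

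For the base case $p = 1$, so $N = 2^{n_1}$, Proposition~\ref{prop:geodesc}(1) gives that $\alpha_N$ is the set of $2^{n_1}$-th roots of unity, and Proposition~\ref{prop:geodesc}(2) forces the next point $a_N$ to be a $2^{n_1}$-th root of $-1$, i.e., the midpoint of two adjacent $2^{n_1}$-th roots of unity. Evaluating the potential at such a midpoint and using rotational symmetry of the roots of unity yields $U_{N,s}(a_N) = \mathcal{U}_s(2^{n_1})$ directly from the definition of $\mathcal{U}_s$.

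For the inductive step I would write $N = 2^{n_1} + M$ with $M = 2^{n_2} + \cdots + 2^{n_p}$ and $1 \leq M < 2^{n_1}$, and apply Proposition~\ref{prop:geodesc}(2) to decompose $\alpha_N = (\alpha_{2^{n_1}}^{1}, \rho_1 \alpha'_M)$, where $(a'_n)$ is another greedy $s$-energy sequence with $a'_0 = 1$, $\rho_1$ is a $2^{n_1}$-th root of $-1$, and the new point is $a_N = \rho_1 a'_M$. Splitting the potential as $U_{N,s}(a_N) = V_1 + V_2$ according to these two blocks, the rotation $z \mapsto \rho_1^{-1} z$ (an isometry of $S^1$) identifies $V_2$ with the corresponding potential for the primed sequence, so
\[
V_2 = \sum_{k=0}^{M-1}|a'_M - a'_k|^{-s},
\]
which by the inductive hypothesis applied to $(a'_n)$ at step $M$ equals $\sum_{k=2}^p \mathcal{U}_s(2^{n_k})$.

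The main hurdle is to show $V_1 = \mathcal{U}_s(2^{n_1})$, i.e., that $a_N = \rho_1 a'_M$ happens to lie at a midpoint between two adjacent $2^{n_1}$-th roots of unity — a priori $a'_M$ is not constrained. The key observation is that, since $M < 2^{n_1}$, the point $a'_M$ belongs to the $2^{n_1}$-th section of $(a'_n)$, which by Proposition~\ref{prop:geodesc}(1) consists exactly of the $2^{n_1}$-th roots of unity. Therefore $a'_M$ is itself a $2^{n_1}$-th root of unity; multiplying by $\rho_1$ (a $2^{n_1}$-th root of $-1$) yields another $2^{n_1}$-th root of $-1$, precisely a midpoint between consecutive $2^{n_1}$-th roots of unity. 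Rotational symmetry then gives $V_1 = \mathcal{U}_s(2^{n_1})$, closing the induction.
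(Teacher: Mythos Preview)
Your proof is correct and follows essentially the same route as the paper: induction on $p=\tau_b(N)$, splitting $U_{N,s}(a_N)$ into the contribution from the block $\alpha_{2^{n_1}}$ and the remainder, handling the latter by rotation invariance plus the inductive hypothesis on the auxiliary greedy sequence $(a'_n)$, and identifying the former with $\mathcal{U}_s(2^{n_1})$ because $a_N$ sits at an arc midpoint. If anything, your argument is slightly more explicit than the paper's on the last point: the paper simply asserts that $a_N$ is the midpoint of one of the arcs between adjacent points of $\alpha_{2^{n_1}}$, whereas you spell out why, via $a'_M$ being a $2^{n_1}$-th root of unity (from Proposition~\ref{prop:geodesc}(1) and $M<2^{n_1}$) and $\rho_1$ being a $2^{n_1}$-th root of $-1$.
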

\begin{proof}
The proof is by induction on $p$. For simplicity of notation, we write $U_{N}$ instead of $U_{N,s}$ and $\alpha_{N}$ instead of $\alpha_{N,s}$. If $p=1$ and $N=2^{n}$, then by Proposition~\ref{prop:geodesc}, the points in $\alpha_{N} = (a_{0},\ldots,a_{N-1})$ are the $N$-th roots of unity. The point $a_{N}$ is the midpoint of one of the arcs between adjacent points in $\alpha_{N}$, hence $U_{N}(a_{N}) = \mathcal{U}_{s}(N)$.

For a fixed $p\geq 1$, assume as induction hypothesis that \eqref{eq:descUnan} is valid for every greedy $s$-energy sequence on $S^1$ and every $N\geq 1$ with binary representation of length $p$. Let $\widetilde{N}=2^{n_{1}}+\cdots+2^{n_{p}}+2^{n_{p+1}}$, $n_{1}>\cdots>n_{p}>n_{p+1}\geq 0$. Then, by Proposition~\ref{prop:geodesc}, we can write
\[
\alpha_{\widetilde{N}}=(\alpha_{2^{n_{1}}},\rho\,{\alpha}'_{\widetilde{N}-2^{n_1}}),
\]
where $\rho$ satisfies $\rho^{2^{n_{1}}}=-1$, and ${\alpha}'_{\widetilde{N}-2^{n_1}}$ is the section of order $\widetilde{N}-2^{n_{1}}$ of a greedy $s$-energy sequence $({a}'_{k})_{k=0}^{\infty}$. So $a_{k}=\rho\,{a}'_{k-2^{n_1}}$, $2^{n_{1}}\leq k\leq \widetilde{N}$, where $({a}'_{0},\ldots,{a}'_{\widetilde{N}-2^{n_1}})={\alpha}'_{\widetilde{N}-2^{n_1}+1}$. We have
\[
U_{\widetilde{N}}(a_{\widetilde{N}})=U_{2^{n_1}}(a_{\widetilde{N}})+\sum_{k=2^{n_1}}^{\widetilde{N}-1}\frac{1}{|a_{k}-a_{\widetilde{N}}|^{s}}.
\]
The point $a_{\widetilde{N}}$ is the midpoint of one of the arcs between two adjacent points in $\alpha_{2^{n_1}}$, hence 
\[
U_{2^{n_1}}(a_{\widetilde{N}})=\mathcal{U}_{s}(2^{n_1}).
\] 
We also have
\[
\sum_{k=2^{n_1}}^{\widetilde{N}-1}\frac{1}{|a_{k}-a_{\widetilde{N}}|^{s}}=
\sum_{k=2^{n_1}}^{\widetilde{N}-1}\frac{1}{|\rho\, {a}'_{k-2^{n_1}}-\rho\, {a}'_{\widetilde{N}-2^{n_1}}|^{s}}=\sum_{k=0}^{\widetilde{N}-2^{n_1}-1}\frac{1}{|{a}'_{k}-{a}'_{\widetilde{N}-2^{n_1}}|^{s}}=U_{M}({a}'_{M}),
\]
where $M:=\widetilde{N}-2^{n_{1}}$. The number $M=2^{n_{2}}+\cdots+2^{n_{p+1}}$ has a binary representation of length $p$, so by induction hypothesis we obtain
\[
U_{M}(a_{M}')=\sum_{k=2}^{p+1}\mathcal{U}_{s}(2^{n_{k}}).
\]
In conclusion, $U_{\widetilde{N}}(a_{\widetilde{N}})=\sum_{k=1}^{p+1}\mathcal{U}_{s}(2^{n_k})$, which finishes the proof of \eqref{eq:descUnan}.
\end{proof} 

\begin{remark} For a greedy $s$-energy sequence $(a_{n})_{n=0}^{\infty}\subset S^{1}$, the energy $E_{s}(\alpha_{N,s})$ of the $N$-th section and the optimal values of the Riesz potential \eqref{eq:minpotgreedy} are linked by the simple relation
\[
E_{s}(\alpha_{N,s})=2\sum_{j=1}^{N-1}\sum_{i=0}^{j-1}\frac{1}{|a_{i}-a_{j}|^{s}}
=2\sum_{j=1}^{N-1}U_{j,s}(a_{j}).
\]
\end{remark}

\section{The case $0<s<1$}\label{sec:potcase}

In this section we will use the following notations. For $0<s<1$ and $N \geq 1$, let
\begin{align}
\mathcal{R}_{s}(N) & :=\frac{\mathcal{L}_{s}(N) - N^2I_{s}(\sigma)}{N^{1+s}},\label{def:RsN}\\
\mathcal{W}_{s}(N) & := \frac{\mathcal{U}_{s}(N)-NI_{s}(\sigma)}{N^s}.\label{def:WsN}
\end{align}
The following formula is a particular application of \cite[Eq. (1.18)]{BrauHardSaff} (take $p=0$ in the referenced equation): 
\begin{equation}\label{eq:asympRsN}
\lim_{N \to \infty} \mathcal{R}_{s}(N)=\frac{2\zeta(s)}{(2\pi)^s},\qquad 0<s<1.
\end{equation} 

We deduce the following result for \eqref{def:WsN}. 
\begin{proposition} \label{prop:secondorderUcal}
We have
\begin{equation}\label{asympcalWsNsl1}
\lim_{N\rightarrow\infty}\mathcal{W}_{s}(N)=
(2^{s} - 1)\frac{2\zeta(s)}{(2\pi)^s},\qquad 0<s<1.
\end{equation}
\end{proposition}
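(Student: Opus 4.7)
The plan is to reduce the claim to the known asymptotics \eqref{eq:asympRsN} by using Proposition~\ref{prop:uInTermsOfL}, which expresses $\mathcal{U}_s(N)$ in terms of $\mathcal{L}_s(N)$ and $\mathcal{L}_s(2N)$. Rearranging the definition \eqref{def:RsN} gives
\[
\mathcal{L}_s(N) = N^2 I_s(\sigma) + N^{1+s}\mathcal{R}_s(N),
\]
and I would substitute this (both at $N$ and at $2N$) into the identity \eqref{calUsNLsN}.

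After this substitution the $I_s(\sigma)$-terms telescope: $\frac{\mathcal{L}_s(2N)}{2N} = 2N\, I_s(\sigma) + (2N)^s \mathcal{R}_s(2N)$ and $\frac{\mathcal{L}_s(N)}{N} = N\, I_s(\sigma) + N^s \mathcal{R}_s(N)$, so
\[
\mathcal{U}_s(N) - N I_s(\sigma) = 2^s N^s \mathcal{R}_s(2N) - N^s \mathcal{R}_s(N).
\]
Dividing by $N^s$ yields the clean identity
\[
\mathcal{W}_s(N) = 2^s \mathcal{R}_s(2N) - \mathcal{R}_s(N),
\]
valid for every $N \geq 1$ and every $0 < s < 1$.

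Finally I would invoke \eqref{eq:asympRsN} twice: both $\mathcal{R}_s(N)$ and $\mathcal{R}_s(2N)$ converge to $\frac{2\zeta(s)}{(2\pi)^s}$ as $N \to \infty$, which immediately gives
\[
\lim_{N\to\infty}\mathcal{W}_s(N) = (2^s - 1)\,\frac{2\zeta(s)}{(2\pi)^s},
\]
as required. There is no real obstacle here; the entire proof is an algebraic manipulation, and the only external input is the already-cited asymptotic \eqref{eq:asympRsN} from \cite{BrauHardSaff}.
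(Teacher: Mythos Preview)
Your proof is correct and follows essentially the same route as the paper: both derive the identity $\mathcal{W}_s(N)=2^s\mathcal{R}_s(2N)-\mathcal{R}_s(N)$ from \eqref{calUsNLsN} and then invoke \eqref{eq:asympRsN}. The only cosmetic difference is that the paper manipulates the fraction $\frac{\mathcal{U}_s(N)-NI_s(\sigma)}{N^s}$ directly rather than first solving \eqref{def:RsN} for $\mathcal{L}_s(N)$ and substituting.
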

\begin{proof}
We have
\begin{align*}
\frac{\mathcal{U}_{s}(N) - N I_{s}(\sigma)}{N^{s}} &= \frac{\mathcal{L}_s(2N) - 2\mathcal{L}_{s}(N) - 2N^2 I_{s}(\sigma)}{2N^{1+s}} \\
&= 2^{s}\frac{\mathcal{L}_{s}(2N) - (2N)^2I_{s}(\sigma)}{(2N)^{1+s}}-\frac{\mathcal{L}_{s}(N)-N^2 I_{s}(\sigma)}{N^{1+s}} \\
&= 2^{s}\mathcal{R}_{s}(2N) - \mathcal{R}_{s}(N)
\end{align*}
and the result follows from \eqref{eq:asympRsN}.
\end{proof}
Note that the limit value in \eqref{asympcalWsNsl1} is negative. 

We establish now bounds for the function $G(\vec{\theta};s)$ defined in \eqref{def:Gcapfunc}, in the range $0<s<1$.

\begin{lemma}\label{lem:boundg}
For all $\vec{\theta}\in\Theta$ we have
\begin{equation}\label{eq:gbounds}
1 \leq G(\vec{\theta}; s) < \frac{2^s}{2^s-1},\qquad 0<s<1.
\end{equation}
\end{lemma}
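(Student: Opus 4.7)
The plan is to use the explicit parametrization \eqref{eq:altdefvectheta}: every $\vec{\theta}\in\Theta$ has the form
\[
\vec{\theta}=\Bigl(\tfrac{2^{n_{1}}}{M},\tfrac{2^{n_{2}}}{M},\ldots,\tfrac{2^{n_{t}}}{M},0,\ldots,0\Bigr),\qquad M=\sum_{k=1}^{t}2^{n_{k}},
\]
with $n_{1}>n_{2}>\cdots>n_{t}=0$ and $M$ odd. Since the zero entries contribute nothing to $G(\vec{\theta};s)$, it suffices to estimate $G(\vec{\theta};s)=M^{-s}\sum_{k=1}^{t}2^{n_{k}s}$, and the two bounds will be handled independently.

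For the lower bound, I would invoke the elementary fact that $x^{s}\geq x$ for $x\in[0,1]$ and $0<s<1$. Applied to each $\theta_{k}\in[0,1]$ this gives $\sum_{k}\theta_{k}^{s}\geq\sum_{k}\theta_{k}$, and the latter sum equals $1$ because $\sum_{k=1}^{t}2^{n_{k}}/M=1$. This produces $G(\vec{\theta};s)\geq 1$, with equality realized by the admissible vector $(1,0,\ldots,0)\in\Theta$, which also shows the constant $1$ is sharp.

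For the upper bound, the idea is to normalize by the leading term $2^{n_{1}}$. Since $M\geq 2^{n_{1}}$, one has
\[
G(\vec{\theta};s)=\sum_{k=1}^{t}\left(\frac{2^{n_{k}}}{M}\right)^{s}\leq \sum_{k=1}^{t}2^{(n_{k}-n_{1})s}.
\]
Because the exponents $n_{1}>n_{2}>\cdots>n_{t}\geq 0$ are distinct integers, $n_{k}\leq n_{1}-(k-1)$, so
\[
\sum_{k=1}^{t}2^{(n_{k}-n_{1})s}\leq \sum_{k=1}^{t}2^{-(k-1)s}<\sum_{j=0}^{\infty}2^{-js}=\frac{1}{1-2^{-s}}=\frac{2^{s}}{2^{s}-1},
\]
the strict inequality following from the finiteness of $t$. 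When $t=1$ the argument degenerates but then $M=1$ and $G(\vec{\theta};s)=1<2^{s}/(2^{s}-1)$ trivially. In all cases we obtain the strict upper bound.

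I do not anticipate a significant obstacle: the whole proof is a careful bookkeeping exercise based on the characterization \eqref{eq:altdefvectheta}. The only subtle point worth checking is strictness of the upper bound, which I would make explicit by separating $t=1$ from $t\geq 2$ (in the latter case $M>2^{n_{1}}$ already yields strictness before the geometric sum argument is even needed).
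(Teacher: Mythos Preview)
Your proposal is correct and follows essentially the same approach as the paper's proof: both use the parametrization \eqref{eq:altdefvectheta}, bound $M\geq 2^{n_{1}}$, invoke $n_{k}\leq n_{1}-(k-1)$, and compare with the geometric series $\sum_{j\geq 0}2^{-js}$. The only cosmetic difference is in the lower bound: the paper phrases it via subadditivity of the concave map $x\mapsto x^{s}$ (i.e., $(\sum\theta_{k})^{s}\leq\sum\theta_{k}^{s}$), whereas you use the equivalent pointwise inequality $x^{s}\geq x$ on $[0,1]$.
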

\begin{proof}
Fix $\vec{\theta}=(\theta_{1},\ldots,\theta_{p})\in\Theta$. Since $\sum_{k=1}^{p} \theta_k = 1$ and the function $x\mapsto x^s$ is concave, we obtain
\begin{align*}
1 = (\theta_1+\cdots+\theta_p)^s \leq \theta_1^s+\cdots+\theta_p^s = G(\vec{\theta}; s).
\end{align*}

Recall that vectors in $\Theta_p$ can be described as follows: For each $\vec{\theta}\in\Theta_p$, there is an odd integer $M= 2^{n_1}+\cdots+2^{n_t}$, where $t \leq p$ and $n_1 > n_{2}>\cdots > n_t=0$ such that
\begin{align*}
\theta_k & =\frac{2^{n_k}}{M},\qquad 1\leq k\leq t,\\
\theta_{k} & =0,\qquad t+1\leq k\leq p.
\end{align*} 
Using this property, we obtain
\begin{align*}
G(\vec{\theta};s) &= \sum_{k=1}^{p} \theta_k^s = \sum_{k=1}^{t}\left(\frac{2^{n_k}}{M}\right)^s 
\leq \sum_{k=1}^{t} 2^{s(n_k-n_1)} \\
&< \sum_{j=0}^{\infty} (2^s)^{-j}=\frac{2^s}{2^s-1}.
\end{align*}
\end{proof}

Recall the definition of the function $\overline{g}(s)$ in \eqref{def:littlegub}. By Lemma~\ref{lem:boundg}, we have 
\[
1<\overline{g}(s)\leq \frac{2^{s}}{2^s-1},\qquad 0<s<1.
\] 
In particular, the limit values in \eqref{eq:limsupunan} and \eqref{eq:liminfunan} are different. We are now ready for the proof of Theorem~\ref{theo:secondorderunanlimits}.

\noindent\textit{Proof of Theorem}~\ref{theo:secondorderunanlimits}. First, note that if $N=2^{n_1}+\cdots+2^{n_p}$, then
\begin{align*}
\frac{U_{N,s}(a_N)-N I_{s}(\sigma)}{N^s}
&= \sum_{k=1}^{p}\frac{\mathcal{U}_{s}(2^{n_k})-2^{n_k}I_{s}(\sigma)}{N^s}\\
&= \sum_{k=1}^{p}\frac{\mathcal{U}_{s}(2^{n_k})-2^{n_k}I_{s}(\sigma)}{(2^{n_k})^s}\left(\frac{2^{n_k}}{N}\right)^s\\
&= \sum_{k=1}^{p}\mathcal{W}_{s}(2^{n_k})\left(\frac{2^{n_k}}{N}\right)^s.
\end{align*}

By \eqref{asympcalWsNsl1}, the sequence $(\mathcal{W}_{s}(N))_{N=1}^{\infty}$ is bounded. Let $C>0$ be an upper bound for all $|\mathcal{W}_{s}(N)|$. Then, for $N=2^{n_{1}}+\cdots+2^{n_{p}}$ we have
\begin{align*}
\left|\frac{U_{N,s}(a_N) - N I_{s}(\sigma)}{N^s}\right|
 & \leq \sum_{k=1}^{p}\left|\mathcal{W}_{s}(2^{n_k})\right|\left(\frac{2^{n_k}}{N}\right)^s \leq C \sum_{k=1}^{p} 2^{(n_k-n_1)s}\\ & < C \sum_{k=0}^{\infty} (2^s)^{-k} = C\frac{2^s}{2^s - 1}. 
\end{align*}
This shows that the sequence \eqref{eq:secondorderunan} is bounded.

Fix $\vec{\theta}=(\theta_1,\ldots,\theta_p)\in\Theta_p$, for some $p\in\mathbb{N}$. 
By definition, there is an infinite sequence $\mathcal{N}$ of integers $N=2^{n_1}+\cdots+2^{n_p}$ with binary representation of length $p$ such that
\begin{equation}\label{seqassoctheta}
\lim_{N\in\mathcal{N}}\frac{2^{n_k}}{N} = \theta_k,\qquad 1\leq k\leq p.
\end{equation}
We then have
\begin{align}
\lim_{N\in\mathcal{N}}\frac{U_{N,s}(a_N)-NI_{s}(\sigma)}{N^s} 
&= \lim_{N\in\mathcal{N}}\sum_{k=1}^{p}\mathcal{W}_{s}(2^{n_k})\left(\frac{2^{n_k}}{N}\right)^{s} \notag\\
&=G(\vec{\theta}; s)(2^s-1)\frac{2\zeta(s)}{(2\pi)^s}.\label{limspesub}
\end{align}
From this we deduce several consequences. First, $G(\vec{\theta}; s)(2^s-1)\frac{2\zeta(s)}{(2\pi)^s}$ is a limit point of the sequence \eqref{eq:secondorderunan}. Second, since $G(\vec{\theta}; s)\geq1$, for such sequences $\mathcal{N}$ satisfying \eqref{seqassoctheta} we have
\[
\lim_{N\in\mathcal{N}}\frac{U_{N,s}(a_{N})-NI_{s}(\sigma)}{N^s} \leq (2^s-1)\frac{2\zeta(s)}{(2\pi)^s}.
\]
(Recall that $\zeta(s)<0$ for $0<s<1$.) Third, since $p$ and $\vec{\theta}$ were arbitrary, \eqref{limspesub} implies
\begin{equation}\label{halfineqliminf}
\liminf_{N\to\infty}\frac{U_{N,s}(a_N)-NI_{s}(\sigma)}{N^s}\leq \overline{g}(s)(2^s-1)\frac{2\zeta(s)}{(2\pi)^s}.
\end{equation}

We start now the proof of \eqref{eq:limsupunan}. Since
\[
\lim_{n\to\infty} \frac{U_{2^n,s}(a_{2^n})-2^n I_{s}(\sigma)}{(2^n)^s}=\lim_{n\to\infty}\mathcal{W}_{s}(2^n)= (2^s - 1)\frac{2\zeta(s)}{(2\pi)^s},
\]
we have
\begin{equation}
\limsup_{N\to\infty} \frac{U_{N,s}(a_N) - N I_{s}(\sigma)}{N^s} \geq (2^s - 1)\frac{2\zeta(s)}{(2\pi)^s}.\label{eq:limsupgeq}
\end{equation}
Let us prove the reverse inequality. 

Suppose that $\mathcal{N}\subset\mathbb{N}$ is a sequence such that
\begin{equation}\label{eq:convsubseq}
\left(\frac{U_{N,s}(a_{N})-NI_{s}(\sigma)}{N^s}\right)_{N\in\mathcal{N}}\quad\text{converges,}
\end{equation}
and let us show that its limit is $\leq (2^{s}-1)\frac{2\zeta(s)}{(2\pi)^{s}}$. First, suppose there exists $p\in\mathbb{N}$ such that  $\tau_{b}(N) = p$ for infinitely many integers $N\in\mathcal{N}$. If necessary, we may choose a subsequence $\widehat{\mathcal{N}}\subset\mathcal{N}$ such that 
\[
\lim_{N\in\widehat{\mathcal{N}}} \frac{2^{n_k}}{N}=\theta_k,\qquad 1\leq k\leq p,\quad N=2^{n_{1}}+\cdots+2^{n_{p}},
\]
for some $\vec{\theta}=(\theta_1,\ldots,\theta_p)\in\Theta_p$. 
In this case, we have
\[
\lim_{N\in\mathcal{N}} \frac{U_{N,s}(a_{N})-NI_{s}(\sigma)}{N^s}= \lim_{N\in\widehat{\mathcal{N}}} \frac{U_{N,s}(a_{N})-NI_{s}(\sigma)}{N^s}\leq (2^s-1)\frac{2\zeta(s)}{(2\pi)^s}.
\]
The final inequality above follows from the fact that $\widehat{\mathcal{N}}$ is a subsequence corresponding to some $\vec{\theta}\in\Theta_p$. We have already shown this inequality to be true for such subsequences. 

Now, assume that $\tau_{b}(N) \to \infty$ as $N \to \infty$ along the sequence $\mathcal{N}$. Fix $0<\epsilon<1$. Let $M$ be an integer such that
\begin{equation}\label{smalltail}
\sum_{k=M}^{\infty} 2^{-sk} < \epsilon. 
\end{equation}
Now, for $N=2^{n_{1}}+2^{n_{2}}+\cdots+2^{n_{\tau_{b}(N)}}\in\mathcal{N}$ we write
\[
\frac{U_{N,s}(a_{N})-NI_{s}(\sigma)}{N^s} = S_{N,1} + S_{N,2}
\]
where
\begin{align}
S_{N,1} & := \sum_{k=1}^{M} \mathcal{W}_{s}(2^{n_k})\left(\frac{2^{n_k}}{N}\right)^{s} \label{def:SN1sl1}\\
S_{N,2} & := \sum_{k=M+1}^{\tau_{b}(N)} \mathcal{W}_{s}(2^{n_k})\left(\frac{2^{n_k}}{N}\right)^{s}.\label{def:SN2sl1}
\end{align}
If $M \geq \tau_{b}(N)$, then $S_{N,2}$ is understood to be zero. With $C$ again the upper bound for all $|\mathcal{W}_{s}(N)|$,  we obtain
\begin{align}
|S_{N,2}| &\leq C\sum_{k=M+1}^{\tau_{b}(N)} \left(\frac{2^{n_k}}{N}\right)^s
\leq C\sum_{k=M+1}^{\tau_{b}(N)} 2^{-s(n_1-n_k)} \notag\\
&\leq C\sum_{k=M+1}^{\tau_{b}(N)} 2^{-s(k-1)}
< C\sum_{k=M}^{\infty} 2^{-sk} < C\epsilon. \label{est:SN2}
\end{align}

Let $\widetilde{N}:=2^{n_{1}}+2^{n_{2}}+\cdots+2^{n_{M}}$. Observe that this quantity varies with $N$. Then
\begin{align*}
S_{N,1} = \sum_{k=1}^{M} \mathcal{W}_{s}(2^{n_k})\left(\frac{2^{n_k}}{\widetilde{N}}\right)^{s}\left(\frac{\widetilde{N}}{N}\right)^{s}=\left(\frac{\widetilde{N}}{N}\right)^{s}\sum_{k=1}^{M} \mathcal{W}_{s}(2^{n_k})\left(\frac{2^{n_k}}{\widetilde{N}}\right)^{s}.
\end{align*}
Now, note that
\begin{align*}
\frac{\widetilde{N}}{N} = \frac{2^{n_1}+\cdots+2^{n_M}}{N}
= 1 - \frac{2^{n_{M+1}}+\cdots+2^{n_{\tau_{b}(N)}}}{N}.
\end{align*}
Since
\[
\frac{2^{n_k}}{N} \leq 2^{-(k-1)},\qquad1\leq k\leq\tau_{b}(N), 
\]
we have
\begin{align*}
\frac{\widetilde{N}}{N} > 1 - \sum_{k=M+1}^{\infty} 2^{-(k-1)} > 1 - \epsilon,
\end{align*}
which implies
\[
\left(\frac{\widetilde{N}}{N}\right)^{s} > (1 - \epsilon)^{s}.
\]
For each $1\leq k\leq M$, the sequence $2^{n_k}/\widetilde{N}$ is bounded, so we may choose a subsequence
$\widehat{\mathcal{N}}\subset \mathcal{N}$ such that
\begin{equation}\label{def:subseqNhat}
\lim_{N\in\widehat{\mathcal{N}}}\frac{2^{n_k}}{\widetilde{N}} = \theta_k,\qquad 1\leq k\leq M,
\end{equation}
for some $\vec{\theta}=(\theta_{1},\ldots,\theta_{M})\in\Theta_M$. 

Observe that by Proposition~\ref{prop:sdinequality},
\[
\mathcal{W}_{s}(2^{n})=\frac{\mathcal{U}_{s}(2^{n})-2^{n}I_{s}(\sigma)}{2^{ns}}=\frac{U_{n,s}(a_{2^{n}})-2^{n}I_{s}(\sigma)}{2^{ns}}<0.
\]
Hence
\[
S_{N,1}=\left(\frac{\widetilde{N}}{N}\right)^{s}\sum_{k=1}^{M}\mathcal{W}_{s}(2^{n_k})\left(\frac{2^{n_k}}{\widetilde{N}}\right)^{s}<(1-\epsilon)^{s}\sum_{k=1}^{M}\mathcal{W}_{s}(2^{n_k})\left(\frac{2^{n_k}}{\widetilde{N}}\right)^{s},
\]
and we obtain
\begin{align*}
\limsup_{N\in\widehat{\mathcal{N}}} S_{N,1} &\leq \limsup_{N\in\widehat{\mathcal{N}}}\,\, (1-\epsilon)^{s}\sum_{k=1}^{M}\mathcal{W}_{s}(2^{n_k})\left(\frac{2^{n_k}}{\widetilde{N}}\right)^{s}\\
& = (1-\epsilon)^{s}G(\vec{\theta};s)(2^{s}-1)\frac{2\zeta(s)}{(2\pi)^s}\\
&\leq (1-\epsilon)^{s}(2^{s}-1)\frac{2\zeta(s)}{(2\pi)^s}.
\end{align*}
It now follows that
\begin{align*}
\lim_{N\in\widehat{\mathcal{N}}} \frac{U_{N,s}(a_N)-NI_{s}(\sigma)}{N^s} &= \lim_{N\in\widehat{\mathcal{N}}}\left(S_{N,1}+S_{N,2}\right) 
\leq \limsup_{N\in\widehat{\mathcal{N}}} S_{N,1} + \limsup_{N\in\widehat{\mathcal{N}}} S_{N,2}\\
&\leq (1-\epsilon)^{s}(2^{s}-1)\frac{2\zeta(s)}{(2\pi)^s} + C\epsilon. 
\end{align*}
Letting $\epsilon\rightarrow 0$ we get
\[
\lim_{N\in\mathcal{N}} \frac{U_{N,s}(a_N)-NI_{s}(\sigma)}{N^s} \leq (2^s-1)\frac{2\zeta(s)}{(2\pi)^s}.
\]
Hence, we have shown that
\begin{equation}
\limsup_{N\to\infty} \frac{U_{N,s}(a_N)-NI_{s}(\sigma)}{N^s} \leq (2^s-1)\frac{2\zeta(s)}{(2\pi)^s}.\label{eq:limsupleq}
\end{equation}
Combining \eqref{eq:limsupgeq} and \eqref{eq:limsupleq}, we have proved \eqref{eq:limsupunan}. 

We now prove \eqref{eq:liminfunan}. In view of \eqref{halfineqliminf}, we only need to justify
\[
\liminf_{N\rightarrow\infty}\frac{U_{N,s}(a_{N})-N I_{s}(\sigma)}{N^{s}}\geq \overline{g}(s)(2^s-1)\frac{2\zeta(s)}{(2\pi)^{s}}.
\] 
Suppose that $\mathcal{N}$ is a sequence of natural numbers such that 
\[
\left(\frac{U_{N,s}(a_N)-N I_{s}(\sigma)}{N^s}\right)_{N\in\mathcal{N}}\quad\text{converges.}
\]
We will show that its limit is $\geq \overline{g}(s)(2^s-1)\frac{2\zeta(s)}{(2\pi)^{s}}$. First, assume that there exists $p\in\mathbb{N}$ such that
infinitely many elements of $\mathcal{N}$ have binary expansion of length $p$. As in our argument to prove \eqref{eq:limsupunan}, we may choose a subsequence $\widehat{\mathcal{N}}\subset\mathcal{N}$ such that there is a vector $\vec{\theta}=(\theta_1,\ldots,\theta_p)\in\Theta_p$ for which
\[
\lim_{N\in\widehat{\mathcal{N}}}\frac{2^{n_k}}{N}=\theta_k
\]
for all $1\leq k\leq p$. We then have
\begin{align*}
\lim_{N\in\mathcal{N}}\frac{U_{N,s}(a_N)-NI_{s}(\sigma)}{N^s}
&= \lim_{N\in\widehat{\mathcal{N}}}\frac{U_{N,s}(a_N)-NI_{s}(\sigma)}{N^s}\\
&= G(\vec{\theta}; s)(2^s-1)\frac{2\zeta(s)}{(2\pi)^s}\\
&\geq \overline{g}(s)(2^s-1)\frac{2\zeta(s)}{(2\pi)^s}.
\end{align*}

Now suppose that $\tau_{b}(N) \to \infty$ as $N\to\infty$ along the sequence $\mathcal{N}$. 
Let $0<\epsilon<1$ and let $M$ be an integer such that
\[
\sum_{k=M}^{\infty} 2^{-sk} < \epsilon. 
\]
Now, if $N=2^{n_1}+2^{n_2}+\cdots+2^{n_{\tau_{b}(N)}}\in\mathcal{N}$, we write
\[
\frac{U_{N,s}(a_N)-NI_{s}(\sigma)}{N^s} = S_{N,1} + S_{N,2},
\]
where $S_{N,1}$ and $S_{N,2}$ are defined again as in \eqref{def:SN1sl1}--\eqref{def:SN2sl1}. 

Put $\widetilde{N} = 2^{n_1}+\cdots+2^{n_{M}}$. As in our argument to prove \eqref{eq:limsupunan}, we may choose a subsequence $\widehat{\mathcal{N}}\subset\mathcal{N}$ so that there exists a vector $\vec{\theta}=(\theta_1,\ldots,\theta_M)\in\Theta_M$ such that 
\[
\lim_{N\in\widehat{\mathcal{N}}} \frac{2^{n_k}}{\widetilde{N}} = \theta_k
\]
for all $1\leq k\leq M$. 
Since
\begin{align*}
S_{N,1} = \sum_{k=1}^{M}\mathcal{W}_{s}(2^{n_k})\left(\frac{2^{n_k}}{\widetilde{N}}\right)^{s}\left(\frac{\widetilde{N}}{N}\right)^{s} \geq \sum_{k=1}^{M}\mathcal{W}_{s}(2^{n_k})\left(\frac{2^{n_k}}{\widetilde{N}}\right)^{s},
\end{align*}
we have
\begin{align*}
\liminf_{N\in\widehat{\mathcal{N}}} S_{N,1} \geq G(\vec{\theta};s)(2^s-1)\frac{2\zeta(s)}{(2\pi)^s}
\geq \overline{g}(s)(2^s-1)\frac{2\zeta(s)}{(2\pi)^s}.
\end{align*}
Combining this with \eqref{est:SN2}, we obtain
\begin{align*}
\lim_{N\in\widehat{\mathcal{N}}}\frac{U_{N,s}(a_N)-NI_{s}(\sigma)}{N^s} &= \lim_{N\in\widehat{\mathcal{N}}}\left(S_{N,1}+S_{N,2}\right)\geq \liminf_{N\in\widehat{\mathcal{N}}}S_{N,1} + \liminf_{N\in\widehat{\mathcal{N}}} S_{N,2}\\
&\geq \overline{g}(s)(2^s-1)\frac{2\zeta(s)}{(2\pi)^s} - C\epsilon. 
\end{align*}
Since $\epsilon$ was arbitrary, this implies
\[
\lim_{N\in\mathcal{N}} \frac{U_{N,s}(a_N)-NI_{s}(\sigma)}{N^s}\geq \overline{g}(s)(2^s-1)\frac{2\zeta(s)}{(2\pi)^s}.
\]
This concludes the proof of \eqref{eq:liminfunan}.\qed

\begin{figure*}
\centering
\begin{subfigure}[b]{0.475\textwidth}
\centering
\includegraphics[width=\textwidth]{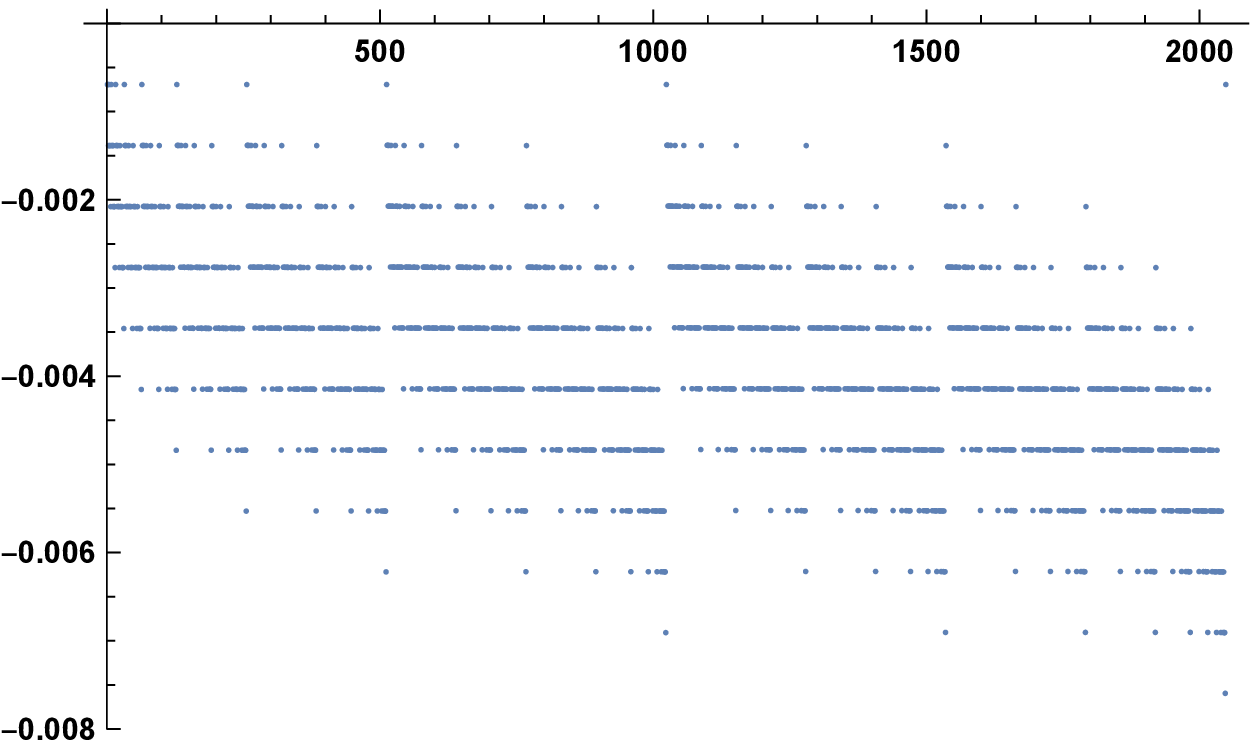}
\caption[]{{$s=0.001$}}    
\end{subfigure}
\hfill
\begin{subfigure}[b]{0.475\textwidth}
\centering 
\includegraphics[width=\textwidth]{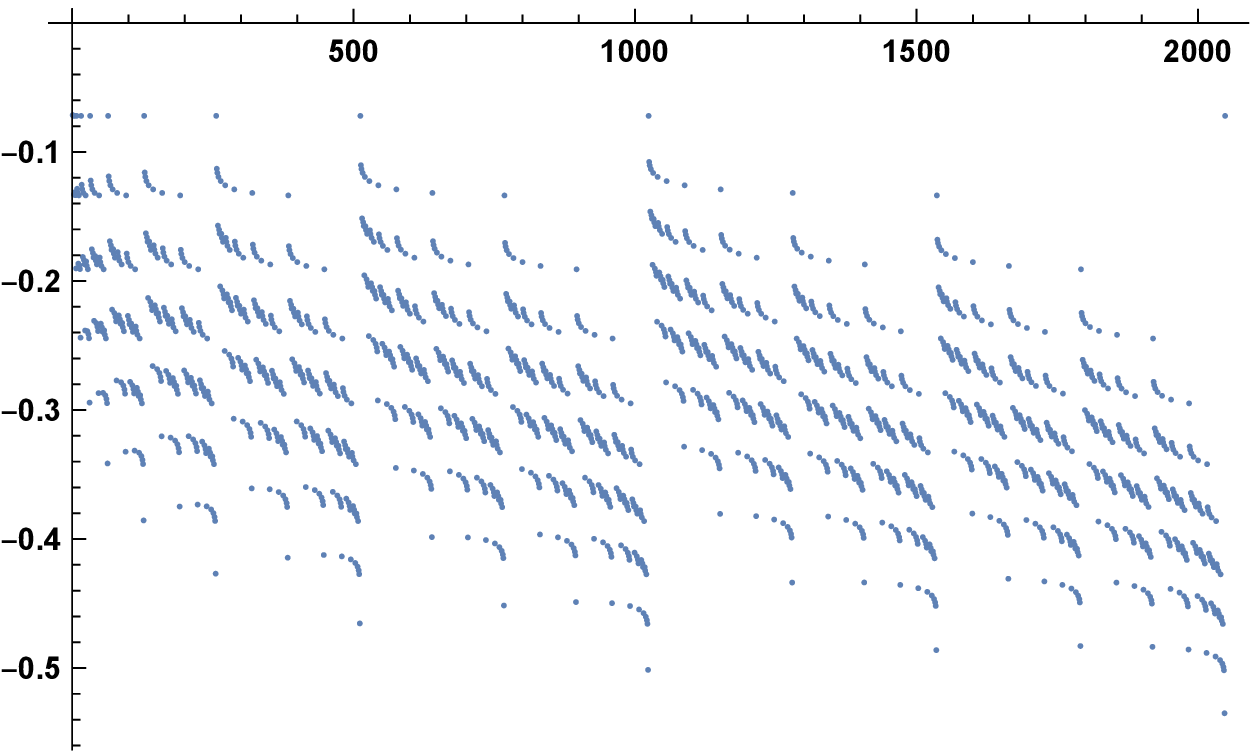}
\caption[]{{$s=0.1$}}
\end{subfigure}
\vskip\baselineskip
\begin{subfigure}[b]{0.475\textwidth}
\centering
\includegraphics[width=\textwidth]{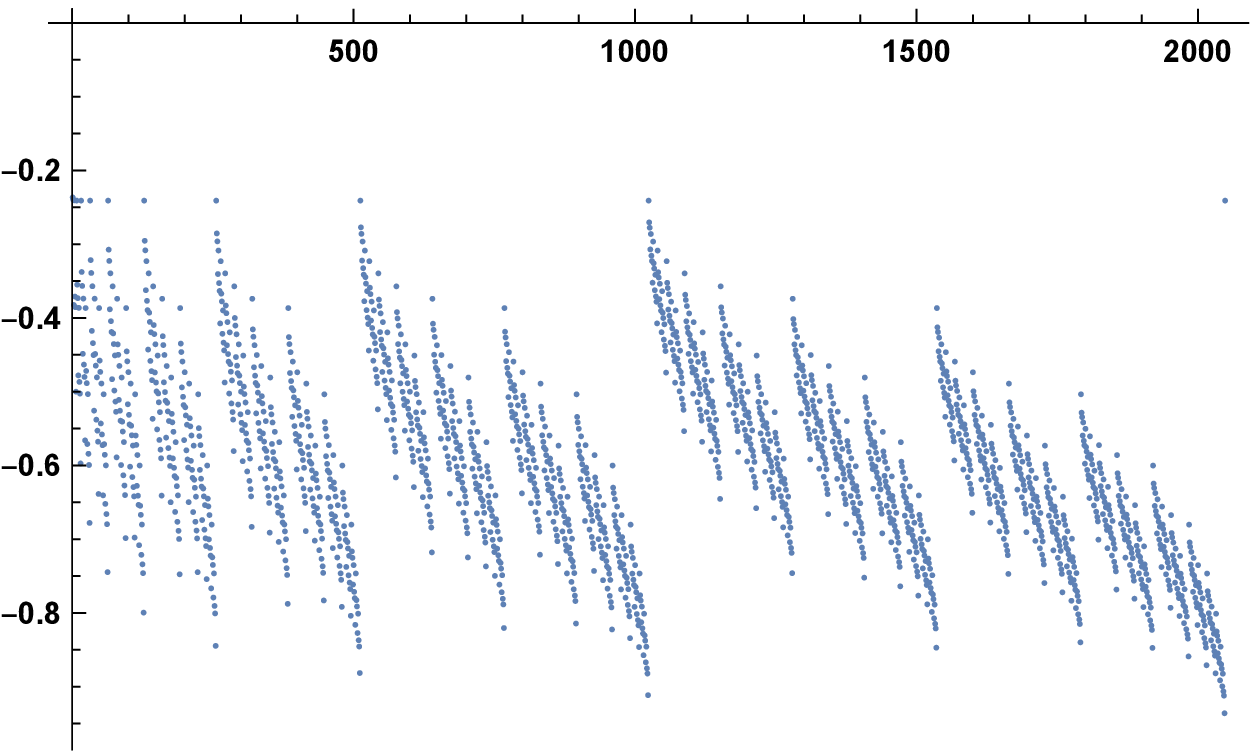}
\caption[]{{$s=0.3$}}    
\end{subfigure}
\hfill
\begin{subfigure}[b]{0.475\textwidth}
\centering 
\includegraphics[width=\textwidth]{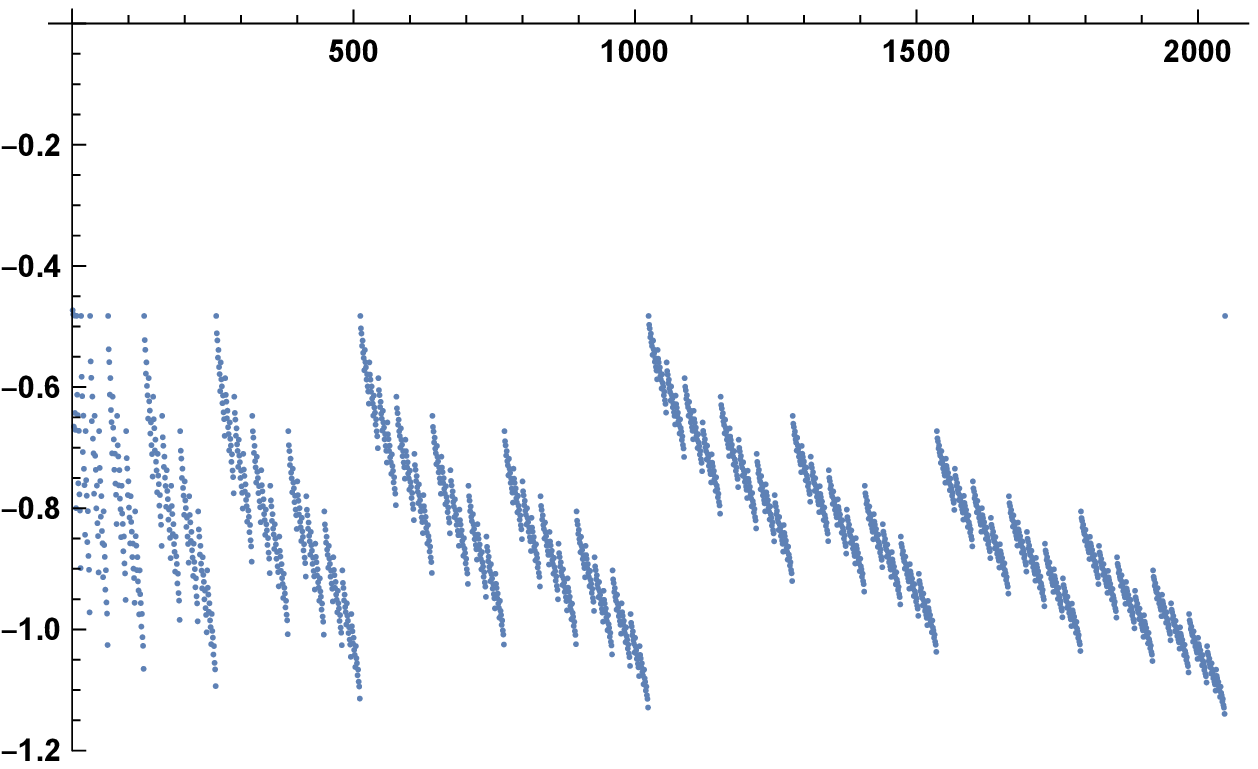}
\caption[]{{$s=0.5$}}
\end{subfigure}
\vskip\baselineskip
\begin{subfigure}[b]{0.475\textwidth}
\centering 
\includegraphics[width=\textwidth]{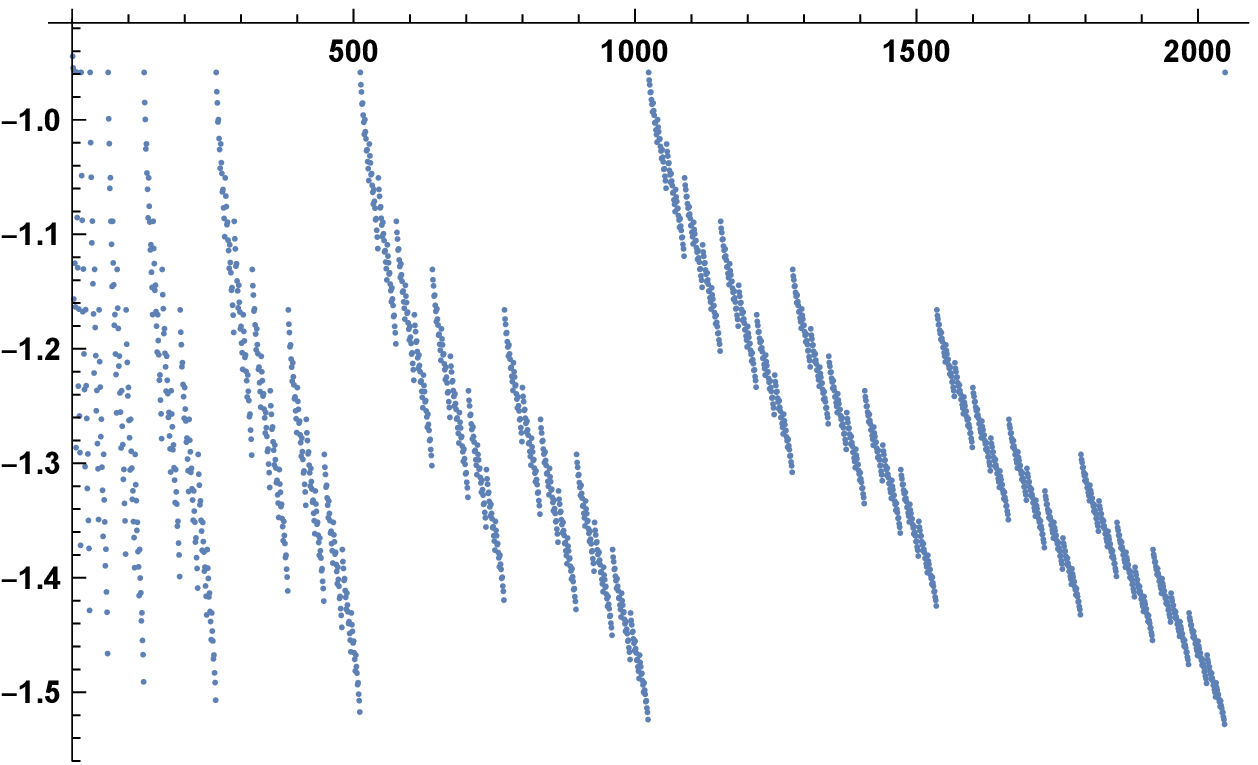}
\caption[]{{$s=0.7$}}    
\end{subfigure}
\quad
\begin{subfigure}[b]{0.475\textwidth}
\centering 
\includegraphics[width=\textwidth]{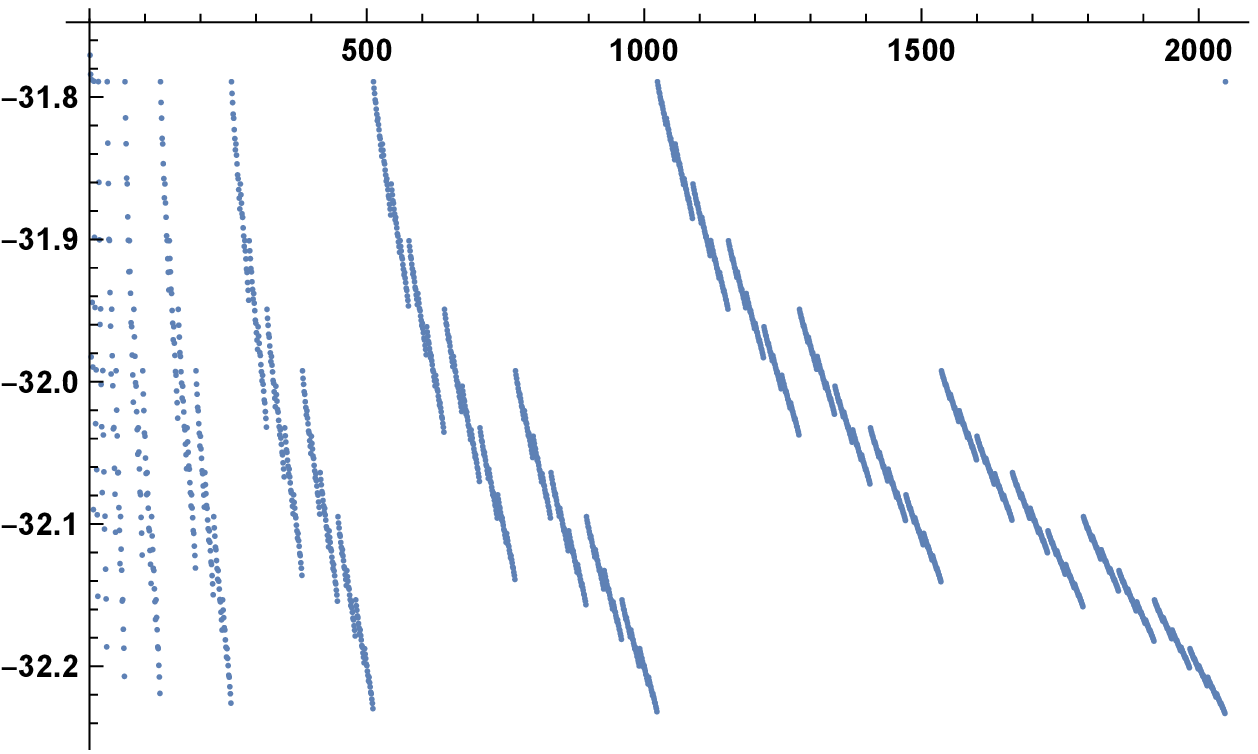}
\caption[]{{$s=0.99$}}    
\end{subfigure}
\caption[]{{Plots of sequences $\left(\frac{U_{N,s}(a_{N})-N I_{s}(\sigma)}{N^{s}}\right)$, $1\leq N\leq 2048$, for different values of $0<s<1$.}}
\label{fig2}
\end{figure*}

\begin{proposition} \label{prop:limPowerOf2Minus1}
Let $N(p)=2^{p}-1$ for $p\in\mathbb{N}$. Then
\begin{equation}\label{eq:specialsublim}
\lim_{p\to\infty} \frac{U_{N(p),s}(a_{N(p)}) - N(p)I_{s}(\sigma)}{(N(p))^{s}} = \frac{2\zeta(s)}{(2\pi)^{s}},\qquad 0<s<1.
\end{equation}
\end{proposition}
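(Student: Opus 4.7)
The strategy is to apply Proposition~\ref{prop:descUnan} directly. Since $N(p)=2^{p}-1=2^{p-1}+2^{p-2}+\cdots+2^{0}$ has the ``full'' binary expansion with $\tau_{b}(N(p))=p$ and exponents $n_{k}=p-k$ for $1\leq k\leq p$, Proposition~\ref{prop:descUnan} together with definition \eqref{def:WsN} of $\mathcal{W}_{s}$ gives
\[
\frac{U_{N(p),s}(a_{N(p)})-N(p)I_{s}(\sigma)}{N(p)^{s}} = \sum_{k=1}^{p}\mathcal{W}_{s}(2^{p-k})\left(\frac{2^{p-k}}{N(p)}\right)^{s}.
\]
Heuristically, $2^{p-k}/N(p)\to 2^{-k}$ as $p\to\infty$, and $\mathcal{W}_{s}(2^{p-k})\to (2^{s}-1)\tfrac{2\zeta(s)}{(2\pi)^{s}}$ by Proposition~\ref{prop:secondorderUcal}, so the formal limit of the sum is
\[
(2^{s}-1)\frac{2\zeta(s)}{(2\pi)^{s}}\sum_{k=1}^{\infty} 2^{-ks} = (2^{s}-1)\frac{2\zeta(s)}{(2\pi)^{s}}\cdot\frac{1}{2^{s}-1} = \frac{2\zeta(s)}{(2\pi)^{s}},
\]
which is exactly the desired value. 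The clean cancellation of the factor $2^{s}-1$ against the geometric sum $\sum 2^{-ks}$ is what makes $N(p)=2^p-1$ special.

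To make this rigorous, I would repeat the truncation scheme already employed in the proof of Theorem~\ref{theo:secondorderunanlimits}. Fix $\epsilon>0$ and pick $M$ with $\sum_{k=M+1}^{\infty} 2^{-ks}<\epsilon$. For $p>M$, split the sum as $S_{p}^{(1)}+S_{p}^{(2)}$ according to $1\leq k\leq M$ and $M+1\leq k\leq p$. Using $N(p)=2^{p}-1\geq 2^{p-1}$, we have $2^{p-k}/N(p)\leq 2^{1-k}$; combined with the uniform bound $|\mathcal{W}_{s}(N)|\leq C$ coming from Proposition~\ref{prop:secondorderUcal}, this yields $|S_{p}^{(2)}|\leq C\cdot 2^{s}\sum_{k=M+1}^{\infty}2^{-ks}<C\cdot 2^{s}\epsilon$, a bound uniform in $p$. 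The main piece $S_{p}^{(1)}$ consists of only $M$ terms, each of the form $\mathcal{W}_{s}(2^{p-k})(2^{p-k}/N(p))^{s}$ with $p-k\to\infty$, so termwise convergence gives
\[
\lim_{p\to\infty} S_{p}^{(1)} = (2^{s}-1)\frac{2\zeta(s)}{(2\pi)^{s}}\sum_{k=1}^{M} 2^{-ks}.
\]
Letting $\epsilon\to 0$ (equivalently $M\to\infty$) completes the argument.

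There is no real obstacle here; the proposition is essentially a direct specialization of the truncation estimates already developed for Theorem~\ref{theo:secondorderunanlimits}. The only conceptual point worth flagging is that the ``limit vector'' $(2^{-1},2^{-2},2^{-3},\ldots)$ naturally associated with $N(p)=2^{p}-1$ has infinitely many nonzero components and therefore does not lie in any $\Theta_{p}$, so the proposition is not an immediate corollary of Theorem~\ref{theo:secondorderunanlimits} applied to a finite $\vec{\theta}$; rather it corresponds to the $\tau_{b}(N)\to\infty$ regime, for which the truncation inside that proof is precisely the required machinery.
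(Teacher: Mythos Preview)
Your argument is correct, but the paper takes a shorter route that avoids truncation altogether. Observe that by \eqref{calUsNLsN} the sum $\sum_{k=1}^{p}\mathcal{U}_{s}(2^{p-k})=\sum_{k=0}^{p-1}\bigl(\tfrac{\mathcal{L}_{s}(2^{k+1})}{2^{k+1}}-\tfrac{\mathcal{L}_{s}(2^{k})}{2^{k}}\bigr)$ \emph{telescopes} to $\mathcal{L}_{s}(2^{p})/2^{p}$, since $\mathcal{L}_{s}(1)=0$. One then writes
\[
\frac{U_{N(p),s}(a_{N(p)})-N(p)I_{s}(\sigma)}{N(p)^{s}}=\frac{\mathcal{L}_{s}(2^{p})-(2^{p})^{2}I_{s}(\sigma)}{2^{p}(2^{p}-1)^{s}}+(2^{p}-1)^{-s}I_{s}(\sigma)
\]
and invokes \eqref{eq:asympRsN} directly, obtaining the limit $2\zeta(s)/(2\pi)^{s}$ without the factor $2^{s}-1$ ever appearing. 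What you gain with your approach is a transparent explanation of \emph{why} the limit for $N(p)=2^{p}-1$ differs from \eqref{eq:limsupunan} by exactly the factor $(2^{s}-1)^{-1}$: it is the geometric sum $\sum_{k\geq 1}2^{-ks}$ coming from the full binary expansion. What the paper gains is brevity and a single clean application of the $\mathcal{R}_{s}$ asymptotic in place of the $\mathcal{W}_{s}$ asymptotic plus a tail estimate. Your closing remark that the associated ``limit vector'' lies outside every $\Theta_{p}$ is apt and explains why the value $2\zeta(s)/(2\pi)^{s}$ is not literally of the form $G(\vec{\theta};s)(2^{s}-1)\tfrac{2\zeta(s)}{(2\pi)^{s}}$ for some $\vec{\theta}\in\Theta$.
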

\begin{proof}
Since $N(p)=\sum_{k=0}^{p-1}2^{k}$, by \eqref{eq:descUnan} and \eqref{calUsNLsN} we have
\begin{equation}\label{eq:UN(p)}
U_{N(p),s}(a_{N(p)})=\sum_{k=0}^{p-1} \left(\frac{\mathcal{L}_{s}(2^{k+1})}{2^{k+1}}-\frac{\mathcal{L}_{s}(2^k)}{2^k}\right)= \frac{\mathcal{L}_{s}(2^p)}{2^p} - \frac{\mathcal{L}_{s}(1)}{1}=\frac{\mathcal{L}_{s}(2^p)}
{2^{p}}
\end{equation}
recall that $\mathcal{L}_{s}(1)=0$. Hence,
\begin{align*}
\frac{U_{N(p),s}(a_{N(p)}) - N(p)I_{s}(\sigma)}{(N(p))^{s}} &= \frac{\frac{\mathcal{L}_{s}(2^p)}{2^p} - (2^p - 1)I_{s}(\sigma)}{(2^p-1)^{s}}\\
&= \frac{\mathcal{L}_{s}(2^p)}{2^p(2^p-1)^{s}} - \frac{2^p}{(2^p - 1)^s}I_{s}(\sigma) + (2^p-1)^{-s}I_{s}(\sigma) \\
&= \frac{\mathcal{L}_{s}(2^p) - (2^p)^2I_{s}(\sigma)}{2^p(2^p-1)^s} + (2^p-1)^{-s}I_{s}(\sigma).
\end{align*}
Since $\lim_{N \to \infty}\mathcal{R}_{s}(N)= 2\zeta(s)/(2\pi)^{s}$, see \eqref{def:RsN} and \eqref{eq:asympRsN}, we obtain
\[
\lim_{p\to\infty} \frac{U_{N(p),s}(a_{N(p)}) - N(p)I_{s}(\sigma)}{(N(p))^s} = \frac{2\zeta(s)}{(2\pi)^s}.
\]
\end{proof}

\begin{remark}
From \eqref{eq:specialsublim} we deduce that 
\[
\liminf_{N\rightarrow\infty}\frac{U_{N,s}(a_{N})-N I_{s}(\sigma)}{N^{s}}\leq \frac{2\zeta(s)}{(2\pi)^s}.
\]
Comparing this with \eqref{eq:liminfunan}, we obtain
\[
\overline{g}(s)\geq \frac{1}{2^{s}-1},\qquad 0<s<1.
\]
\end{remark}

\section{The case $s=1$}\label{sec:sequalone}

We start with the following auxiliary result.

\begin{proposition}
We have
\[
\lim_{N\rightarrow\infty}\frac{\mathcal{U}_{1}(N)}{N\log N}=\frac{1}{\pi}.
\]
\end{proposition}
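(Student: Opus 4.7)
The plan is to reduce the statement to a first-order asymptotic for $\mathcal{L}_{1}(N)$ via the identity already established in Proposition~\ref{prop:uInTermsOfL}. That proposition gives
\[
\mathcal{U}_{1}(N) = \frac{\mathcal{L}_{1}(2N)}{2N} - \frac{\mathcal{L}_{1}(N)}{N},
\]
so everything hinges on understanding $\mathcal{L}_{1}(N)/N$. Once we have $\mathcal{L}_{1}(N) = \frac{1}{\pi}N^{2}\log N + O(N^{2})$, the result drops out: using $\log(2N)=\log N+\log 2$,
\[
\mathcal{U}_{1}(N) \;=\; \frac{2N}{\pi}\log N - \frac{N}{\pi}\log N + O(N) \;=\; \frac{N}{\pi}\log N + O(N),
\]
and division by $N\log N$ yields the claimed limit $1/\pi$.

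The main task is therefore to establish the asymptotic $\mathcal{L}_{1}(N) = \tfrac{1}{\pi}N^{2}\log N + O(N^{2})$. Starting from the closed form
\[
\mathcal{L}_{1}(N) = \frac{N}{2}\sum_{k=1}^{N-1}\csc\!\left(\frac{k\pi}{N}\right),
\]
this reduces to showing that $\sum_{k=1}^{N-1}\csc(k\pi/N) = \tfrac{2}{\pi}N\log N + O(N)$. I would either invoke the corresponding asymptotic in \cite{BrauHardSaff} (the same source already used for $\mathcal{R}_{s}(N)$ in Section~\ref{sec:potcase}), or prove it directly: exploit the symmetry $\csc(k\pi/N)=\csc((N-k)\pi/N)$ to fold the sum, then on the boundary block $1\leq k\leq N/2$ use $\csc(k\pi/N)= N/(k\pi) + O(k/N)$ for $k = o(N)$ to extract the harmonic sum $\sum_{k} 1/k \sim \log N$ with coefficient $2/\pi$ after accounting for the symmetry; the bulk contribution where $k/N$ is bounded away from $0$ and $1$ is visibly $O(N)$ because $\csc$ is bounded there.

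The main obstacle I anticipate is purely bookkeeping for the cosecant sum: one must carry the pointwise estimate for $\csc$ uniformly in the right range and verify that the remainder from the approximation $\csc x \approx 1/x$ near the endpoints is $O(N)$ (it is, because $\csc x - 1/x = O(x)$ uniformly on, say, $(0,\pi/2]$, and the corresponding tail contributes $\sum_{k} (k/N)$ which is $O(N)$). No cancellation in Proposition~\ref{prop:uInTermsOfL} is needed beyond what the $O(N^{2})$ remainder in $\mathcal{L}_{1}$ already gives, since after dividing by $N$ and subtracting the result is $O(N) = o(N\log N)$. A sharper form of this asymptotic (with an explicit constant in the $O(N^{2})$ term) will presumably be needed for the second-order Theorem~\ref{theo:soase1}, but not here.
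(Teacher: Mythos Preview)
Your proposal is correct and follows essentially the same route as the paper: reduce to the asymptotic of $\mathcal{L}_{1}(N)$ via Proposition~\ref{prop:uInTermsOfL}, then subtract. The paper cites \cite{MMRS} for the bare limit $\mathcal{L}_{1}(N)/(N^{2}\log N)\to 1/\pi$, which already suffices for the subtraction (no cancellation issue arises because $\log(2N)/\log N\to 1$); your use of the stronger $O(N^{2})$ remainder from \cite{BrauHardSaff} or a direct cosecant estimate is fine but more than is needed here.
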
 
\begin{proof}
The following formula is deduced from \cite[Thm. 3.2]{MMRS}:
\[
\lim_{N\to\infty} \frac{\mathcal{L}_{1}(N)}{N^2\log N}=\frac{1}{\pi}.
\]
We have
\[
\frac{\mathcal{U}_{1}(N)}{N\log N} = \frac{\mathcal{L}_{1}(2N)}{2N^2 \log N} - \frac{\mathcal{L}_{1}(N)}{N^2\log N}
\]
and since $2N^2\log N = \frac{1}{2}(2N)^2\log(2N) - 2\log(2)N^2$, we get
\[
\lim_{N \to \infty} \frac{\mathcal{L}_{1}(2N)}{2N^2 \log N} = \lim_{N \to \infty} \frac{\mathcal{L}_{1}(2N)}{\frac{1}{2}(2N)^2\log(2N)} = \frac{2}{\pi}.
\]
Hence,
\begin{align*}
\lim_{N \to \infty} \frac{\mathcal{U}_{1}(N)}{N\log N} = \lim_{N \to \infty} \left(\frac{\mathcal{L}_{1}(2N)}{2N^2\log N} - \frac{\mathcal{L}_{1}(N)}{N^2\log N}\right) = \frac{2}{\pi} - \frac{1}{\pi} = \frac{1}{\pi}.
\end{align*}
\end{proof}

In this section we will use the notation
\[
\mathcal{W}_{1}(N):= \frac{\mathcal{U}_{1}(N)}{N\log N},\qquad N\geq 2.
\]
Therefore we have
\begin{equation}\label{asympW1N}
\lim_{N\to\infty} \mathcal{W}_{1}(N) = \frac{1}{\pi}.
\end{equation}

\noindent\textit{Proof of Theorem}~\ref{theo:foase1}. Let $N=2^{n_1}+2^{n_2}+\cdots+2^{n_{p}}$ where $n_1>n_2>\cdots>n_p\geq 0$. Then
\begin{align*}
\frac{U_{N,1}(a_N)}{N\log N}
= \sum_{k=1}^{p}\frac{\mathcal{U}_{1}(2^{n_k})}{N\log N}
= \sum_{k=1}^{p}\mathcal{W}_{1}(2^{n_k})\frac{2^{n_k}\log 2^{n_k}}{N\log N}.
\end{align*}
Observe that
\[
\sum_{k=1}^{p}2^{n_{k}} \log 2^{n_{k}}\leq \sum_{k=1}^{p}2^{n_{k}}\log N=N\log N,
\]
so the sequence $(U_{N,1}(a_{N})/N\log N)$ is bounded.

Fix $p\in\mathbb{N}$ and $\vec{\theta}=(\theta_1,\theta_2,\ldots,\theta_p)\in\Theta_p$. 
Let $\mathcal{N}$ be a sequence of natural numbers $N = 2^{n_1} + 2^{n_2} + \cdots + 2^{n_p}$, $n_1>n_2>\cdots >n_p\geq 0$, such that
\[
\lim_{N\in\mathcal{N}}\frac{2^{n_k}}{N} = \theta_k,\qquad 1\leq k\leq p.
\]
Fix $1\leq k\leq p$. First, assume that $\theta_{k}>0$.
Since
\[
\frac{\log 2^{n_k}}{\log N} = \frac{\log 2^{n_k}-\log N+\log N}{\log N}
= \frac{\log(2^{n_k}/N)}{\log N} + 1
\]
and $2^{n_k}/N\to\theta_k$, we have in this case
\[
\lim_{N\in\mathcal{N}}\frac{\log 2^{n_k}}{\log N} = 1.
\]
Therefore by \eqref{asympW1N} we get
\[
\lim_{N\in\mathcal{N}}\mathcal{W}_{1}(2^{n_k})\frac{2^{n_k}\log 2^{n_k}}{N\log N} = \frac{\theta_k}{\pi}.
\]
Now assume that $\theta_k = 0$. Since $2^{n_k}\leq N$, we have that 
$\log 2^{n_k}/\log N$ is bounded, thus
\[
\lim_{N\in\mathcal{N}}\mathcal{W}_{1}(2^{n_k})\frac{2^{n_k}\log 2^{n_k}}{N\log N} = 0.
\]
But since $\theta_k=0$, this limit is also equal to $\theta_k/\pi$ in this case. 

It then follows that
\begin{equation}\label{eq:limUn1:1}
\lim_{N\in\mathcal{N}} \frac{U_{N,1}(a_N)}{N\log N}
= \lim_{N\in\mathcal{N}}\sum_{k=1}^{p} \mathcal{W}_{1}(2^{n_k})\frac{2^{n_k}\log 2^{n_k}}{N\log N}
= \sum_{k=1}^{p}\frac{\theta_k}{\pi} = \frac{1}{\pi}.
\end{equation}

To prove \eqref{eq:firstorders1}, we will show that any convergent subsequence
\begin{equation}\label{seqUn1}
\left(\frac{U_{N,1}(a_{N})}{N\log N}\right)_{N\in\mathcal{N}}
\end{equation}
of the sequence under analysis has limit $1/\pi$. Let $\mathcal{N}$ be such a subsequence. 

If there exists $p\in\mathbb{N}$ such that $\tau_{b}(N)=p$ for infinitely many $N\in\mathcal{N}$, then we may choose a subsequence $\widehat{\mathcal{N}}\subset\mathcal{N}$ associated with a vector $(\theta_{1},\ldots,\theta_{p})\in\Theta_{p}$, and our previous argument leading to \eqref{eq:limUn1:1} would show that the sequence \eqref{seqUn1} converges to $1/\pi$.

Now assume that $\tau_{b}(N)\rightarrow\infty$ along the sequence $\mathcal{N}$. Let $\epsilon>0$. Let $M$ be an integer such that 
\begin{equation}\label{newsmalltail}
\sum_{k=M}^{\infty} 2^{-k} < \epsilon. 
\end{equation}
For $N=2^{n_{1}}+2^{n_{2}}+\cdots+2^{n_{\tau_{b}(N)}}\in\mathcal{N}$ we write
\[
\frac{U_{N,1}(a_N)}{N\log N} = S_{N,1} + S_{N,2},
\]
where
\begin{align*}
S_{N,1} & :=\sum_{k=1}^{M}\mathcal{W}_{1}(2^{n_k})\frac{2^{n_k}\log 2^{n_k}}{N\log N},\\
S_{N,2} & :=\sum_{k=M+1}^{\tau_{b}(N)} \mathcal{W}_{1}(2^{n_k})\frac{2^{n_k}\log 2^{n_k}}{N\log N}.
\end{align*}
The second sum is understood to be zero if $M\geq \tau_{b}(N)$. 
Since $(\mathcal{W}_{1}(N))$ converges, there is a constant $C$ such that $|\mathcal{W}_{1}(N)|\leq C$ for all $N$. So we have
\begin{equation}\label{eq:boundSn2s1}
\left|S_{N,2}\right| \leq C\sum_{k=M+1}^{\tau_{b}(N)} 
\frac{2^{n_k}\log 2^{n_{k}}}{N \log N}\leq C\sum_{k=M+1}^{\tau_{b}(N)} 
2^{n_k-n_{1}}<C\sum_{k=M}^{\infty} 2^{-k} < C\epsilon. 
\end{equation}
We now turn our attention to $S_{N,1}$. 

Let $\widetilde{N}:= 2^{n_1} + 2^{n_2} + \cdots + 2^{n_M}$. 
If necessary, we may choose a subsequence $\widehat{\mathcal{N}}\subset\mathcal{N}$ such that
\[
\lim_{N\in\widehat{\mathcal{N}}}\frac{2^{n_k}}{\widetilde{N}} = \theta_k
\]
for all $1\leq k\leq M$ and for some $\vec{\theta}=(\theta_1,\theta_2,\ldots,\theta_M)\in\Theta_M$. 

Note that
\[
\frac{\widetilde{N}\log\widetilde{N}}{N\log N}
= \frac{\widetilde{N}}{N}\frac{\log(\widetilde{N}/N)}{\log N}+\frac{\widetilde{N}}{N}.
\]
Since
\[
\frac{\widetilde{N}}{N} = 1-\frac{2^{n_{M+1}}+\cdots+2^{n_{\tau_{b}(N)}}}{N} > 1 - \epsilon,
\]
we have
\begin{equation}\label{closeNN1}
\left|\frac{\widetilde{N}}{N}-1\right|<\epsilon.
\end{equation}
Hence,
\begin{align*}
\left|\frac{\widetilde{N}\log\widetilde{N}}{N\log N}-1\right|
&= \left|\frac{\widetilde{N}}{N}\frac{\log\left(\widetilde{N}/N\right)}{\log N}+\frac{\widetilde{N}}{N}-1\right| \\
&\leq \left|\frac{\widetilde{N}}{N}\frac{\log\left(\widetilde{N}/N\right)}{\log N}\right|
+ \left|\frac{\widetilde{N}}{N}-1\right|\\
&< |\log(1-\epsilon)|+\epsilon.
\end{align*}

Suppose $N\in\widehat{\mathcal{N}}$ is large enough so that
\begin{align*}
\left|\sum_{k=1}^{M}\mathcal{W}_{1}(2^{n_k})\frac{2^{n_k}\log 2^{n_k}}{\widetilde{N}\log\widetilde{N}} - \frac{1}{\pi}\right| < \epsilon. 
\end{align*}
For such $N$ we then have
\begin{align*}
\left|S_{N,1}-\frac{1}{\pi}\right|& =\left|\sum_{k=1}^{M}\mathcal{W}_{1}(2^{n_k})\frac{2^{n_k}\log 2^{n_k}}{\widetilde{N}\log\widetilde{N}}\frac{\widetilde{N}\log\widetilde{N}}{N\log N} - \frac{1}{\pi}\right|\\
& =\left|\sum_{k=1}^{M}\mathcal{W}_{1}(2^{n_k})\frac{2^{n_k}\log 2^{n_k}}{\widetilde{N}\log\widetilde{N}}\frac{\widetilde{N}\log\widetilde{N}}{N\log N} - \frac{1}{\pi}\frac{\widetilde{N}\log\widetilde{N}}{N\log N} + \frac{1}{\pi}\frac{\widetilde{N}\log\widetilde{N}}{N\log N}- \frac{1}{\pi}\right|\\
&\leq \left|\sum_{k=1}^{M}\mathcal{W}_{1}(2^{n_k})\frac{2^{n_k}\log 2^{n_k}}{\widetilde{N}\log\widetilde{N}}-\frac{1}{\pi}\right|
+ \frac{1}{\pi}\left|\frac{\widetilde{N}\log\widetilde{N}}{N\log N}-1\right|\\
&< \epsilon + \frac{1}{\pi}\left(|\log(1-\epsilon)|+\epsilon\right).
\end{align*}

Using \eqref{eq:boundSn2s1}, it then follows that
\begin{align*}
\left|\frac{U_{N,1}(a_{N})}{N\log N}-\frac{1}{\pi}\right|
& =\left|S_{N,1} + S_{N,2} - \frac{1}{\pi}\right|\leq \left|S_{N,1}-\frac{1}{\pi}\right| + \left|S_{N,2}\right|\\ 
& <\epsilon + \frac{1}{\pi}\left(|\log(1-\epsilon)|+\epsilon\right)+C \epsilon.
\end{align*}

Since the sequence \eqref{seqUn1} converges, and $\epsilon$ is arbitrary, we have shown that
\[
\lim_{N\in\mathcal{N}}\frac{U_{N,1}(a_N)}{N\log N} = \frac{1}{\pi}. 
\]
This concludes the proof.\qed

\smallskip

We establish now bounds for the function $\Lambda(\vec{\theta})$ defined in \eqref{def:Lambdafunc}.

\begin{lemma}\label{lembound}
There exists a constant $C<0$ such that for all $\vec{\theta}\in\Theta$, 
\[
C < \Lambda(\vec{\theta}) \leq 0. 
\]
\end{lemma}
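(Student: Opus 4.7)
\textbf{Proof plan for Lemma~\ref{lembound}.}

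The upper bound is immediate. From the parametrization \eqref{eq:altdefvectheta}, each coordinate of $\vec{\theta}\in\Theta$ lies in $[0,1]$, and the nonzero coordinates sum to $1$. Since $x\log x\le 0$ on $[0,1]$, we get $\Lambda(\vec{\theta})\le 0$ at once, and actually equality is attained at the admissible vector $(1,0,\ldots,0)\in\Theta_p$ (obtained from $M=1$, $t=1$, $n_1=0$).

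For the lower bound, the plan is to exploit the dyadic structure of $\Theta$. Writing $\vec{\theta}=(2^{n_1}/M,\ldots,2^{n_t}/M,0,\ldots,0)$ as in \eqref{eq:altdefvectheta}, the strict inequalities $n_1>n_2>\cdots>n_t$ together with $M\ge 2^{n_1}$ force the \emph{key geometric estimate}
\[
\theta_k\;\le\;2^{n_k-n_1}\;\le\;2^{-(k-1)}, \qquad 1\le k\le t,
\]
and of course $\theta_k=0$ for $k>t$. Thus every coordinate of $\vec{\theta}$ decays at least geometrically with rate $1/2$.

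Setting $h(x):=x\log(1/x)$, which is nonnegative, concave, maximized at $x=1/e$ with value $1/e$, and \emph{strictly increasing} on $[0,1/e]$, I split
\[
-\Lambda(\vec{\theta})\;=\;\sum_{k=1}^{p}h(\theta_k)\;=\;\sum_{k=1}^{2}h(\theta_k)\;+\;\sum_{k\ge 3}h(\theta_k).
\]
For $k=1,2$ I use the crude bound $h(\theta_k)\le 1/e$. For $k\ge 3$ the inequality $\theta_k\le 2^{-(k-1)}\le 1/4<1/e$ places $\theta_k$ in the region of monotonicity of $h$, so that $h(\theta_k)\le h(2^{-(k-1)})=(k-1)2^{-(k-1)}\log 2$. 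Summing,
\[
-\Lambda(\vec{\theta})\;\le\;\frac{2}{e}\,+\,(\log 2)\sum_{k=3}^{\infty}(k-1)2^{-(k-1)}
\;=\;\frac{2}{e}\,+\,\frac{3}{2}\log 2,
\]
where I use $\sum_{j=1}^{\infty}j\,2^{-j}=2$ to evaluate the series. This uniform bound is independent of $\vec{\theta}$ and $p$, so any $C$ with $C<-(2/e+\tfrac{3}{2}\log 2)$ (for instance $C=-2$) satisfies both $C<0$ and $C<\Lambda(\vec{\theta})$ for every $\vec{\theta}\in\Theta$.

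The only mild subtlety is that the trivial pointwise bound $h(\theta_k)\le 1/e$ per coordinate is useless because $t$ (and hence the number of nonzero coordinates) is unbounded over $\Theta$; the geometric decay $\theta_k\le 2^{-(k-1)}$ is what rescues the argument by making the tail summable. No other steps present an obstacle.
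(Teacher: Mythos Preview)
Your proof is correct and follows essentially the same approach as the paper's: both use the geometric decay $\theta_k\le 2^{-(k-1)}$, bound a finite head of the sum by the global maximum $1/e$ of $h(x)=-x\log x$, and control the tail via the monotonicity of $h$ on $[0,1/e]$ together with the summability of $\sum (k-1)2^{-(k-1)}$. The only cosmetic difference is that you commit to the explicit split point $k=3$ (equivalently $M=2$) and compute an explicit constant, whereas the paper phrases the split abstractly as ``let $M$ be large enough so that $2^{-M}<e^{-1}$''.
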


\begin{proof}
Fix a vector $\vec{\theta}=(\theta_{1},\ldots,\theta_{p})\in\Theta$. Since $0\leq \theta_k\leq 1$ for all $k$, we have
\[
\Lambda(\vec{\theta}) \leq 0.
\]
Note that the function $x \mapsto x\log x$ has an absolute minimum value of $-e^{-1}$, and it is decreasing on the interval $0<x<e^{-1}$. Let $M$ be large enough so that $2^{-M} < e^{-1}$. 
If $p\leq M$, then we have the trivial bound
\[
\Lambda(\vec{\theta}) = \sum_{k=1}^{p} \theta_k\log\theta_k \geq -\sum_{k=1}^{p}e^{-1}\geq-Me^{-1}. 
\]
Now suppose that $p \geq M+1$. We will use the inequality $\theta_k \leq 2^{-(k-1)}$ and our choice of $M$ to obtain the following:
\begin{align*}
\Lambda(\vec{\theta}) &= \sum_{k=1}^{M}\theta_k\log\theta_k + \sum_{k=M+1}^{p} \theta_k\log\theta_k
\geq -Me^{-1} + \sum_{k=M+1}^{p}2^{-(k-1)}\log 2^{-(k-1)}\\
&= -Me^{-1} - \log(2)\sum_{k=M+1}^{p} (k-1)2^{-(k-1)}\\
&> -Me^{-1} - \log(2) \sum_{n=1}^{\infty} n 2^{-n} 
= -Me^{-1} - 2\log 2.
\end{align*}
Thus, we may take $C = -Me^{-1} - 2\log 2$. 
\end{proof}

\begin{proposition}
The following limit holds:
\begin{equation}\label{asympcalU1Nnorm}
\lim_{N\to\infty}\frac{\mathcal{U}_{1}(N)-\frac{1}{\pi}N\log N}{N}= \frac{1}{\pi}\left(\gamma+\log\left(\frac{8}{\pi}\right)\right),
\end{equation}
where $\gamma$ is the Euler-Mascheroni constant.
\end{proposition}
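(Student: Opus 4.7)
The plan is to reduce the statement to a known second-order expansion of the equally-spaced energy $\mathcal{L}_1(N)$ via the identity from Proposition~\ref{prop:uInTermsOfL}. That proposition gives
\[
\mathcal{U}_{1}(N)=\frac{\mathcal{L}_{1}(2N)}{2N}-\frac{\mathcal{L}_{1}(N)}{N},
\]
so after dividing by $N$ and subtracting $\frac{1}{\pi}\log N$ I rewrite
\[
\frac{\mathcal{U}_{1}(N)-\frac{1}{\pi}N\log N}{N}=\frac{1}{N^{2}}\Bigl[\tfrac{1}{2}\mathcal{L}_{1}(2N)-\mathcal{L}_{1}(N)\Bigr]-\frac{\log N}{\pi}.
\]
The whole question then becomes computing the second-order term in the asymptotic expansion of $\mathcal{L}_{1}(N)/N^{2}$.

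For this I would invoke the known complete asymptotic expansion of the minimal $1$-energy of $N$ equally spaced points on $S^{1}$ (it is, for instance, part of the framework used in \cite{BrauHardSaff,MMRS}, which was already cited for the leading-order formula proved earlier in this section). Explicitly, the expansion I need is
\[
\mathcal{L}_{1}(N)=\frac{N^{2}\log N}{\pi}+\frac{N^{2}}{\pi}\Bigl(\gamma+\log\tfrac{2}{\pi}\Bigr)+o(N^{2}),
\]
which can be read off from that reference (alternatively, one can derive it from scratch via the classical identity $\mathcal{L}_{1}(N)=\tfrac{N}{2}\sum_{k=1}^{N-1}\csc(k\pi/N)$ together with an Euler--Maclaurin estimate of $\sum_{k=1}^{N-1}\csc(k\pi/N)$, using the Laurent expansion of $\csc x - 1/x$ at $0$ to isolate the divergent part; this is the step I would expect to be the main obstacle if the expansion is not quoted directly).

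Granted this expansion, plugging $N$ and $2N$ in gives
\[
\tfrac{1}{2}\mathcal{L}_{1}(2N)=\frac{2N^{2}\log N}{\pi}+\frac{2N^{2}\log 2}{\pi}+\frac{2N^{2}}{\pi}\Bigl(\gamma+\log\tfrac{2}{\pi}\Bigr)+o(N^{2}),
\]
\[
\mathcal{L}_{1}(N)=\frac{N^{2}\log N}{\pi}+\frac{N^{2}}{\pi}\Bigl(\gamma+\log\tfrac{2}{\pi}\Bigr)+o(N^{2}),
\]
and subtracting,
\[
\tfrac{1}{2}\mathcal{L}_{1}(2N)-\mathcal{L}_{1}(N)=\frac{N^{2}\log N}{\pi}+\frac{N^{2}}{\pi}\Bigl(\gamma+2\log 2+\log\tfrac{2}{\pi}\Bigr)+o(N^{2}).
\]
Since $2\log 2+\log(2/\pi)=\log(8/\pi)$, dividing by $N^{2}$ and subtracting $\frac{1}{\pi}\log N$ collapses to
\[
\frac{\mathcal{U}_{1}(N)-\frac{1}{\pi}N\log N}{N}=\frac{1}{\pi}\Bigl(\gamma+\log\tfrac{8}{\pi}\Bigr)+o(1),
\]
which gives \eqref{asympcalU1Nnorm} upon taking $N\to\infty$.
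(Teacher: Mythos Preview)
Your proof is correct and follows essentially the same route as the paper: both use Proposition~\ref{prop:uInTermsOfL} to write $\mathcal{U}_1(N)$ in terms of $\mathcal{L}_1(2N)$ and $\mathcal{L}_1(N)$, then invoke the second-order expansion $\mathcal{L}_1(N)=\frac{1}{\pi}N^2\log N+\frac{1}{\pi}(\gamma+\log(2/\pi))N^2+o(N^2)$ from \cite{BrauHardSaff}, and combine the two instances so that the $\log N$ terms cancel and the extra $\log 4$ appears. The paper arranges the algebra slightly differently (it isolates the quotients $\frac{\mathcal{L}_1(M)-\frac{1}{\pi}M^2\log M}{M^2}$ and passes to the limit in each), but the content is identical.
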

\begin{proof}
The following formula is deduced from \cite[Thm. 1.3]{BrauHardSaff}
\begin{equation}\label{limauxBHS}
\lim_{N\to\infty}\frac{\mathcal{L}_{1}(N)-\frac{1}{\pi}N^2\log N}{N^2} = \frac{1}{\pi}(\gamma+\log\left(2/\pi\right)).
\end{equation}
If $N\geq 1$, then
\begin{align*}
\frac{\mathcal{U}_{1}(N)-\frac{1}{\pi}N\log N}{N}
&= \frac{\mathcal{L}_{1}(2N)}{\frac{1}{2}(2N)^2}
- \frac{\mathcal{L}_{1}(N)}{N^2} - \frac{1}{\pi}\log N\\
&= 2\frac{\mathcal{L}_{1}(2N)-\frac{1}{\pi}(2N)^2\log 2N+\frac{1}{\pi}(2N)^2\log 2N}{(2N)^2}\\
&- \frac{\mathcal{L}_{1}(N)-\frac{1}{\pi}N^2\log N+\frac{1}{\pi}N^2\log N}{N^2} - \frac{1}{\pi}\log N\\
&= 2\frac{\mathcal{L}_{1}(2N)-\frac{1}{\pi}(2N)^2\log 2N}{(2N)^2}
- \frac{\mathcal{L}_{1}(N)-\frac{1}{\pi}N^2\log N}{N^2}+ \frac{1}{\pi}\log 4.
\end{align*}
The claim now follows from \eqref{limauxBHS}.
\end{proof}

\begin{figure}
\centering
\includegraphics[width=\linewidth]{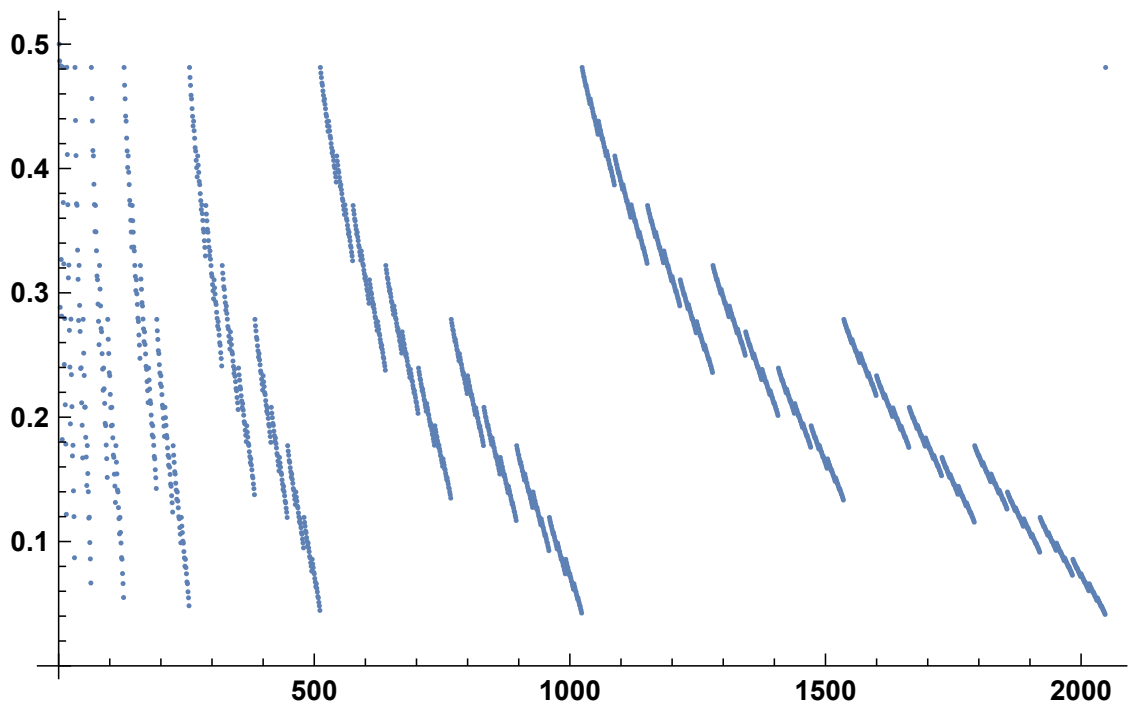}
\caption[]{Plot of the sequence $\left(\frac{U_{N,1}(a_{N})-\frac{1}{\pi}N\log N}{N}\right)$ for $1 \leq N \leq 2048$.}
\label{fig3}
\end{figure}

\noindent\textit{Proof of Theorem}~\ref{theo:soase1}. We will use the notation
\[
\mathcal{T}(N):=\frac{\mathcal{U}_{1}(N)-\frac{1}{\pi}N\log N}{N},\qquad N\geq 1.
\]
So according to \eqref{asympcalU1Nnorm} we have
\begin{equation}\label{asympcalTN}
\lim_{N\rightarrow\infty}\mathcal{T}(N)=\frac{1}{\pi}\left(\gamma+\log\left(\frac{8}{\pi}\right)\right).
\end{equation}

For any index $N=2^{n_{1}}+\cdots+2^{n_{p}}$, we have
\begin{align*}
\frac{U_{N,1}(a_N)-\frac{1}{\pi}N\log N}{N}
&= \sum_{k=1}^{p}\frac{\mathcal{U}_{1}(2^{n_k})-\frac{1}{\pi}2^{n_k}\log 2^{n_k} - \frac{1}{\pi}2^{n_k}\log\left(2^{-n_k}N\right)}{N}\\
&= \sum_{k=1}^{p}\frac{2^{n_k}}{N}\left(\mathcal{T}(2^{n_k}) + \frac{1}{\pi}\log\left(\frac{2^{n_k}}{N}\right)\right).
\end{align*}
The boundedness of \eqref{normsecordseqse1} then follows immediately from the boundedness of $(\mathcal{T}(N))$ and Lemma~\ref{lembound}.
 
Fix $\vec{\theta}=(\theta_1,\ldots,\theta_p)\in\Theta_p$, for some $p\in\mathbb{N}$. Let $\mathcal{N}$ be a sequence of natural numbers $N=2^{n_1}+2^{n_2}+\cdots+2^{n_p}$, $n_1>n_2>\cdots>n_p\geq 0$, such that 
\begin{equation}\label{condseqtheta}
\lim_{N\in\mathcal{N}} \frac{2^{n_{k}}}{N} = \theta_k,\qquad 1\leq k\leq p.
\end{equation}
Fix $1\leq k\leq p$. First, suppose $\theta_k > 0$. It then follows from \eqref{asympcalTN} and \eqref{condseqtheta} that
\begin{align*}
\lim_{N\in\mathcal{N}} \frac{2^{n_k}}{N}\left(\mathcal{T}(2^{n_k}) + \frac{1}{\pi}\log\left(\frac{2^{n_k}}{N}\right)\right)
= \frac{\theta_k}{\pi}\left(\gamma+\log\left(\frac{8}{\pi}\right)+\log\theta_k\right).
\end{align*}
If $\theta_k=0$, then clearly 
\begin{align*}
\lim_{N\in\mathcal{N}} \frac{2^{n_k}}{N}\left(\mathcal{T}(2^{n_k}) + \frac{1}{\pi}\log\left(\frac{2^{n_k}}{N}\right)\right)=0.
\end{align*}
Hence,
\begin{align}
\lim_{N\in\mathcal{N}}\frac{U_{N,1}(a_N)-\frac{1}{\pi}N\log N}{N}&=\lim_{N\in\mathcal{N}}\sum_{k=1}^{p}\frac{2^{n_k}}{N}\left(\mathcal{T}(2^{n_k}) + \frac{1}{\pi}\log\left(\frac{2^{n_k}}{N}\right)\right)\notag\\
&=\sum_{k=1}^{p}\frac{\theta_k}{\pi}\left(\gamma+\log\left(\frac{8}{\pi}\right)+\log\theta_k\right)\notag\\
&= \frac{1}{\pi}\left(\gamma+\log\left(\frac{8}{\pi}\right)+\Lambda(\vec{\theta})\right).\label{impcons}
\end{align}
Therefore the value \eqref{impcons} is a limit point of the sequence \eqref{normsecordseqse1}. Since $\vec{\theta}$ is arbitrary, it also follows that 
\begin{equation}\label{ineqsecordliminfs1}
\liminf_{N\rightarrow\infty}\frac{U_{N,1}(a_N)-\frac{1}{\pi}N\log N}{N}\leq \frac{1}{\pi}\left(\gamma+\log\left(\frac{8}{\pi}\right)+\lambda\right).
\end{equation}
We also deduce from \eqref{impcons} that for sequences $\mathcal{N}$ associated with a vector $\vec{\theta}\in\Theta_{p}$, we have
\[
\lim_{N\in\mathcal{N}}\frac{U_{N,1}(a_N)-\frac{1}{\pi}N\log N}{N}\leq \frac{1}{\pi}\left(\gamma+\log\left(\frac{8}{\pi}\right)\right).
\]

Now we turn to the proof of \eqref{eq:secordlimsups1}. If we take the sequence $N=2^{n}$, then
\[
\lim_{n\rightarrow\infty}\frac{U_{2^{n},1}(a_{2^{n}})-\frac{1}{\pi} 2^{n}\log 2^{n}}{2^{n}}=\lim_{n\rightarrow\infty}\mathcal{T}(2^{n})=\frac{1}{\pi}\left(\gamma + \log\left(\frac{8}{\pi}\right)\right),
\]
thus
\[
\limsup_{N\rightarrow\infty}\frac{U_{N,1}(a_N)-\frac{1}{\pi}N\log N}{N}\geq \frac{1}{\pi}\left(\gamma+\log\left(\frac{8}{\pi}\right)\right).
\]
In order to prove the reverse inequality, we consider an arbitrary sequence $\mathcal{N}$ of natural numbers such that
\begin{equation}\label{subseqconv}
\left(\frac{U_{N,1}(a_N)-\frac{1}{\pi}N\log N}{N}\right)_{N\in\mathcal{N}}\qquad\text{converges,}
\end{equation}
and we will show that its limit is not greater than $(1/\pi)(\gamma+\log(8/\pi))$. First, suppose there exists $p\in\mathbb{N}$ such that $\tau_{b}(N)=p$ for infinitely many $N\in\mathcal{N}$. If necessary, we may choose a subsequence $\widehat{\mathcal{N}}\subset\mathcal{N}$ such that 
for $N=2^{n_1}+\cdots+2^{n_p}\in\widehat{\mathcal{N}}, n_1>\cdots>n_p\geq 0$, we have
\[
\lim_{N\in\widehat{\mathcal{N}}}\frac{2^{n_k}}{N}=\theta_k,\qquad 1\leq k\leq p,
\]
for some $\vec{\theta}=(\theta_1,\ldots,\theta_p)\in\Theta_p$. Then
\[
\lim_{N\in\mathcal{N}}\frac{U_{N,1}(a_N)-\frac{1}{\pi}N\log N}{N}=\lim_{N\in\widehat{\mathcal{N}}} \frac{U_{N,1}(a_N)-\frac{1}{\pi}N\log N}{N}\leq \frac{1}{\pi}\left(\gamma + \log\left(\frac{8}{\pi}\right)\right). 
\]
The above inequality follows from the fact that $\widehat{\mathcal{N}}$ is a sequence corresponding to some $\vec{\theta}\in\Theta_p$. 

Now, suppose that $\tau_{b}(N)\to\infty$ as $N\to\infty$ along the sequence $\mathcal{N}$. Fix $0<\epsilon<1$. Let $M$ be an integer large enough such that 
\begin{equation}\label{twocondM}
2^{-M}<e^{-1}\qquad \mbox{and}\qquad\sum_{k=M}^{\infty} k\,2^{-k} < \epsilon.
\end{equation}
For $N=2^{n_{1}}+\cdots+2^{n_{\tau_{b}(N)}}\in\mathcal{N}$, we define
\begin{align*}
S_{N,1} & := \sum_{k=1}^{M}\frac{2^{n_k}}{N}\left(\mathcal{T}(2^{n_k}) + \frac{1}{\pi}\log\left(\frac{2^{n_k}}{N}\right)\right)\\
S_{N,2} & := \sum_{k=M+1}^{\tau_{b}(N)}\frac{2^{n_k}}{N}\left(\mathcal{T}(2^{n_k}) + \frac{1}{\pi}\log\left(\frac{2^{n_k}}{N}\right)\right)
\end{align*}
where $S_{N,2}$ is understood to be zero if $M\geq\tau_{b}(N)$. 

We analyze $S_{N,2}$ first. Since $(\mathcal{T}(N))$ converges, there is a constant $C$ such that $|\mathcal{T}(N)|\leq C$ for all $N$. For $k\geq M+1$ we have
\[
\frac{2^{n_{k}}}{N}\leq 2^{-(k-1)}\leq 2^{-M}<e^{-1}
\]
and the function $x\mapsto |x\log x|$ is increasing for $0<x<e^{-1}$, so it follows from \eqref{twocondM} that
\begin{align}
|S_{N,2}| &= \left|\sum_{k=M+1}^{\tau_{b}(N)}\frac{2^{n_k}}{N}\left(\mathcal{T}(2^{n_k}) + \frac{1}{\pi}\log\left(\frac{2^{n_k}}{N}\right)\right)\right|\notag\\
&\leq C\sum_{k=M+1}^{\tau_{b}(N)}\frac{2^{n_k}}{N}
+ \frac{1}{\pi}\sum_{k=M+1}^{\tau_{b}(N)}\left|\frac{2^{n_k}}{N}\log\left(\frac{2^{n_k}}{N}\right)\right|\notag\\
& \leq C\sum_{k=M+1}^{\tau_{b}(N)}2^{-(k-1)}
+ \frac{1}{\pi}\sum_{k=M+1}^{\tau_{b}(N)}\left|2^{-(k-1)}\log(2^{-(k-1)})\right|\notag\\
&< (C + \frac{1}{\pi}\log 2)\,\epsilon.\label{aboundforSN2}
\end{align}

We turn our attention now to $S_{N,1}$. We define $\widetilde{N}:=2^{n_{1}}+\cdots+2^{n_{M}}$, truncation of the binary expansion of $N$. We choose a subsequence $\widehat{\mathcal{N}}\subset\mathcal{N}$ such that for $N=2^{n_1}+\cdots+2^{n_{\tau_{b}(N)}}\in\widehat{\mathcal{N}}$ we have
\begin{equation}\label{subgentheta}
\lim_{N\in\widehat{\mathcal{N}}}\frac{2^{n_k}}{\widetilde{N}}=\theta_k,\qquad 1\leq k\leq M,
\end{equation}
for some $\vec{\theta}=(\theta_1,\ldots,\theta_M)\in\Theta_M$. In what follows, let $L:=(1/\pi)(\gamma+\log(8/\pi))$. We write
\begin{equation}\label{SN1part1}
\sum_{k=1}^{M}\frac{2^{n_{k}}}{N}\mathcal{T}(2^{n_{k}})-L=\sum_{k=1}^{M}\frac{2^{n_{k}}}{\widetilde{N}}\Big(\frac{\widetilde{N}}{N}\mathcal{T}(2^{n_{k}})-L\Big).
\end{equation}
Fix $1\leq k\leq M$. If $\theta_{k}=0$, then \eqref{subgentheta} implies 
\begin{equation}\label{limittheta1}
\lim_{N\in\widehat{\mathcal{N}}}\frac{2^{n_{k}}}{\widetilde{N}}\Big(\frac{\widetilde{N}}{N}\mathcal{T}(2^{n_{k}})-L\Big)=0.
\end{equation}
Assume now that $\theta_{k}>0$. First, observe that $\widetilde{N}$ tends to infinity as $N$ tends to infinity along the subsequence $\widehat{\mathcal{N}}$ (this follows from the fact that $\widetilde{N}\geq 2^{n_{1}}\geq 2^{\tau_{b}(N)-1}\rightarrow\infty$). Therefore, from $\theta_{k}>0$ and \eqref{subgentheta} we obtain that $\mathcal{T}(2^{n_{k}})\rightarrow L$ as $N\rightarrow\infty$ along $\widehat{\mathcal{N}}$. Observe also that the estimate \eqref{closeNN1} is valid, see \eqref{twocondM} and \eqref{newsmalltail}. Using the triangle inequality it easily follows that
\begin{equation}\label{limittheta2}
\limsup_{N\in\widehat{\mathcal{N}}}\frac{2^{n_{k}}}{\widetilde{N}}\left|\frac{\widetilde{N}}{N}\mathcal{T}(2^{n_{k}})-L\right|\leq L\,\theta_{k}\,\epsilon.
\end{equation}
We conclude from \eqref{SN1part1}, \eqref{limittheta1}, and \eqref{limittheta2} that 
\begin{equation}\label{estfirstpart}
\limsup_{N\in\widehat{\mathcal{N}}}\left|\sum_{k=1}^{M}\frac{2^{n_{k}}}{N}\mathcal{T}(2^{n_{k}})-L\right|\leq L\,\epsilon.
\end{equation}

Fix $1\leq k\leq M$ again. We can write
\begin{align*}
\frac{2^{n_{k}}}{N}\log\left(\frac{2^{n_{k}}}{N}\right)-\theta_{k}\log \theta_{k} & 
=\frac{2^{n_{k}}}{N}\log\left(\frac{2^{n_{k}}}{N}\right)-\frac{2^{n_{k}}}{\widetilde{N}}\log\left(\frac{2^{n_{k}}}{\widetilde{N}}\right)+\frac{2^{n_{k}}}{\widetilde{N}}\log\left(\frac{2^{n_{k}}}{\widetilde{N}}\right)-\theta_{k}\log \theta_{k}\\
& =\frac{2^{n_{k}}}{\widetilde{N}}\log\left(\frac{2^{n_{k}}}{\widetilde{N}}\right)\left(\frac{\widetilde{N}}{N}-1\right)+\frac{2^{n_{k}}}{\widetilde{N}}\frac{\widetilde{N}}{N}\log\left(\frac{\widetilde{N}}{N}\right)\\
& +\frac{2^{n_{k}}}{\widetilde{N}}\log\left(\frac{2^{n_{k}}}{\widetilde{N}}\right)-\theta_{k}\log \theta_{k}.
\end{align*}
From \eqref{subgentheta} and \eqref{closeNN1} we obtain
\[
\limsup_{N\in\widehat{\mathcal{N}}}\left|\frac{2^{n_{k}}}{N}\log\left(\frac{2^{n_{k}}}{N}\right)-\theta_{k}\log \theta_{k}\right|\leq |\theta_{k}\log \theta_{k}|\,\epsilon+\theta_{k}|\log(1-\epsilon)|
\] 
which implies
\begin{align}
\limsup_{N\in\widehat{\mathcal{N}}}\left|\sum_{k=1}^{M}\frac{2^{n_{k}}}{N}\log\left(\frac{2^{n_{k}}}{N}\right)-\Lambda(\vec{\theta})\right| & =\limsup_{N\in\widehat{\mathcal{N}}}\left|\sum_{k=1}^{M}\left(\frac{2^{n_{k}}}{N}\log\left(\frac{2^{n_{k}}}{N}\right)-\theta_{k} \log \theta_{k}\right)\right|\notag\\
& \leq \sum_{k=1}^{M}\limsup_{N\in\widehat{\mathcal{N}}}\left|\frac{2^{n_{k}}}{N}\log\left(\frac{2^{n_{k}}}{N}\right)-\theta_{k}\log \theta_{k}\right|\notag\\
& \leq |\Lambda(\vec{\theta})|\,\epsilon+|\log(1-\epsilon)|\notag\\
& \leq |\lambda|\,\epsilon+|\log(1-\epsilon)|.\label{estsecpart}
\end{align} 
From the definition of $S_{N,1}$ and the estimates \eqref{estfirstpart} and \eqref{estsecpart}, we easily deduce that
\begin{equation}\label{estimateforSN1}
\limsup_{N\in\widehat{\mathcal{N}}}|S_{N,1}-(L+\frac{1}{\pi} \Lambda(\vec{\theta}))|\leq L\,\epsilon+\frac{1}{\pi}(|\lambda|\,\epsilon+|\log(1-\epsilon)|).
\end{equation}

If we write
\begin{align*}
\frac{U_{N,1}(a_N)-\frac{1}{\pi}N\log N}{N} & =S_{N,1}+S_{N,2}-(L+\frac{1}{\pi}\Lambda(\vec{\theta}))+L+\frac{1}{\pi}\Lambda(\vec{\theta})\\
& \leq |S_{N,2}|+|S_{N,1}-(L+\frac{1}{\pi}\Lambda(\vec{\theta}))|+L
\end{align*}
where we used that $\Lambda(\vec{\theta})\leq 0$, we obtain from \eqref{aboundforSN2} and \eqref{estimateforSN1} that 
\begin{align*}
\lim_{N\in\mathcal{N}}\frac{U_{N,1}(a_N)-\frac{1}{\pi}N\log N}{N} & =\lim_{N\in\widehat{\mathcal{N}}}\frac{U_{N,1}(a_N)-\frac{1}{\pi}N\log N}{N}\\
& \leq \limsup_{N\in\widehat{\mathcal{N}}}\,\,(|S_{N,2}|+|S_{N,1}-(L+\frac{1}{\pi}\Lambda(\vec{\theta}))|+L)\\
& \leq (C+\frac{1}{\pi}\log 2)\,\epsilon+L\,\epsilon+\frac{1}{\pi}(|\lambda|\,\epsilon+|\log(1-\epsilon)|)+L,
\end{align*}
and letting $\epsilon\rightarrow 0$ we reach the desired inequality
\begin{equation}\label{desineq}
\lim_{N\in\mathcal{N}}\frac{U_{N,1}(a_N)-\frac{1}{\pi}N\log N}{N}\leq L.
\end{equation}
This finishes the proof of \eqref{eq:secordlimsups1}. 

In view of \eqref{ineqsecordliminfs1}, the proof of \eqref{eq:secordliminfs1} will be complete if we show
\[
\liminf_{N\rightarrow\infty}\frac{U_{N,1}(a_N)-\frac{1}{\pi}N\log N}{N}\geq \frac{1}{\pi}\left(\gamma + \log\left(\frac{8}{\pi}\right)+\lambda\right).
\] 
Let $\mathcal{N}$ be a sequence of natural numbers for which \eqref{subseqconv} holds, and let us prove that the limit is $\geq (1/\pi) (\gamma + \log(8/\pi)+\lambda)$. As argued before and in virtue of \eqref{impcons}, the claim is true if there exists $p\in\mathbb{N}$ such that $\tau_{b}(N)=p$ for infinitely many $N\in\mathcal{N}$. Now assume that $\tau_{b}(N)\rightarrow\infty$ along the sequence $\mathcal{N}$. We can use the same estimates obtained before in the proof of \eqref{desineq}. Indeed, for $\vec{\theta}=(\theta_{1},\ldots,\theta_{M})$ the vector given by \eqref{subgentheta}, writing
\begin{align*}
\frac{U_{N,1}(a_{N})-\frac{1}{\pi}N \log N}{N} & =S_{N,1}+S_{N,2}-(L+\frac{1}{\pi}\Lambda(\vec{\theta}))+L+\frac{1}{\pi}\Lambda(\vec{\theta})\\
& \geq \frac{1}{\pi}(\gamma+\log\left(\frac{8}{\pi}\right)+\lambda)-|S_{N,2}|-|S_{N,1}-(L+\frac{1}{\pi}\Lambda(\vec{\theta}))|
\end{align*}
and applying \eqref{aboundforSN2} and \eqref{estimateforSN1} we obtain
\[
\lim_{N\in\mathcal{N}}\frac{U_{N,1}(a_{N})-\frac{1}{\pi}N \log N}{N}\geq \frac{1}{\pi}(\gamma+\log\left(\frac{8}{\pi}\right)+\lambda)-(C+L+\frac{1}{\pi}(\log 2+|\lambda|))\,\epsilon-\frac{1}{\pi}|\log(1-\epsilon)|.
\]
Letting $\epsilon\rightarrow 0$ we obtain the desired inequality. This concludes the proof of \eqref{eq:secordliminfs1}.\qed

\begin{remark}
Let $N(p)=2^{p}-1$. As in \eqref{eq:UN(p)}, we have $U_{N(p),1}(a_{N(p)})=2^{-p}\mathcal{L}_{1}(2^p)$. Hence by \eqref{limauxBHS} we obtain 
\begin{align*}
\lim_{p\rightarrow\infty}\frac{U_{N(p),1}(a_{N(p)})-\frac{1}{\pi}N(p)\log(N(p))}{N(p)}& =\lim_{p\rightarrow\infty}\frac{\mathcal{L}_{1}(2^{p})-\frac{1}{\pi} 2^{p}(2^p-1)\log(2^{p}-1)}{2^{p}(2^p-1)}\\
& =\frac{1}{\pi}(\gamma+\log(2/\pi)).
\end{align*}
Comparing this limit value with the limit value in \eqref{eq:secordliminfs1}, we deduce the estimate
\[
\lambda\leq -2\log 2.
\]
\end{remark}

\section{The case $s>1$}\label{sec:sgone}

We need first the following auxiliary result.

\begin{proposition}
If $s > 1$, then 
\begin{equation}\label{asympUsncalsg1}
\lim_{N \to \infty} \frac{\mathcal{U}_{s}(N)}{N^{s}} = (2^s - 1)\frac{2\zeta(s)}{(2\pi)^s}.
\end{equation}
\end{proposition}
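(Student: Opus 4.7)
The plan is to prove the statement by essentially mimicking the computation in Proposition~\ref{prop:secondorderUcal}, but using the large-$N$ asymptotics of $\mathcal{L}_s(N)$ appropriate for the super-critical range $s>1$. The starting point is the identity from Proposition~\ref{prop:uInTermsOfL},
\[
\mathcal{U}_{s}(N) = \frac{\mathcal{L}_{s}(2N)}{2N} - \frac{\mathcal{L}_{s}(N)}{N},
\]
which expresses the quantity of interest as a difference of two rescaled optimal energies of roots of unity.

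The key analytic input is the first-order asymptotic formula
\[
\lim_{N\to\infty}\frac{\mathcal{L}_{s}(N)}{N^{s+1}} = \frac{2\zeta(s)}{(2\pi)^{s}},\qquad s>1,
\]
which can be extracted from the Brauchart--Hardin--Saff expansion cited in the paper (the analog of \eqref{eq:asympRsN} but with the natural normalization $N^{s+1}$ for the super-critical case; this reflects the fact that for $s>1$ the minimal energy on $S^{1}$ grows like $N^{s+1}$ rather than $N^{2}$). Taking this as granted, the rest of the proof is a direct algebraic manipulation.

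I would then divide the displayed identity for $\mathcal{U}_{s}(N)$ by $N^{s}$ and re-scale each summand to introduce the factor $N^{s+1}$ in the denominator:
\[
\frac{\mathcal{U}_{s}(N)}{N^{s}} = 2^{s}\,\frac{\mathcal{L}_{s}(2N)}{(2N)^{s+1}} - \frac{\mathcal{L}_{s}(N)}{N^{s+1}}.
\]
Applying the above limit to both terms on the right yields
\[
\lim_{N\to\infty}\frac{\mathcal{U}_{s}(N)}{N^{s}} = 2^{s}\,\frac{2\zeta(s)}{(2\pi)^{s}} - \frac{2\zeta(s)}{(2\pi)^{s}} = (2^{s}-1)\,\frac{2\zeta(s)}{(2\pi)^{s}},
\]
which is the desired identity.

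The only nontrivial step is securing the asymptotic $\mathcal{L}_{s}(N)/N^{s+1}\to 2\zeta(s)/(2\pi)^{s}$ for $s>1$; once this is in hand, the rest is a one-line rearrangement parallel to the $0<s<1$ case. One could alternatively derive this asymptotic self-containedly from the explicit formula $\mathcal{L}_{s}(N)=2^{-s}N\sum_{k=1}^{N-1}(\sin(k\pi/N))^{-s}$ by approximating $\sin(k\pi/N)\approx k\pi/N$ for $k$ small relative to $N$ (and using symmetry $k\leftrightarrow N-k$), and controlling the error via dominated convergence after rescaling, which shows that the sum contributes $\thicksim 2(N/\pi)^{s}\zeta(s)$ as $N\to\infty$. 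This would be the main (and only) obstacle if one were to avoid invoking \cite{BrauHardSaff}.
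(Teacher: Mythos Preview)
Your proof is correct and follows essentially the same route as the paper: both start from the identity $\mathcal{U}_{s}(N)=\mathcal{L}_{s}(2N)/(2N)-\mathcal{L}_{s}(N)/N$, rewrite $\mathcal{U}_{s}(N)/N^{s}=2^{s}\mathcal{L}_{s}(2N)/(2N)^{s+1}-\mathcal{L}_{s}(N)/N^{s+1}$, and pass to the limit using $\mathcal{L}_{s}(N)/N^{s+1}\to 2\zeta(s)/(2\pi)^{s}$. The only cosmetic difference is that the paper cites \cite{MMRS} for this last asymptotic, whereas you attribute it to \cite{BrauHardSaff}; either reference suffices.
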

\begin{proof}
It follows from \cite[Thm. 3.2]{MMRS} that if $s > 1$, then
\begin{equation}\label{asympLsnsg1}
\lim_{N \to \infty} \frac{\mathcal{L}_{s}(N)}{N^{1+s}} = \frac{2\zeta(s)}{(2\pi)^s}.
\end{equation}
Applying \eqref{calUsNLsN} we obtain
\begin{align*}
\lim_{N \to \infty} \frac{\mathcal{U}_{s}(N)}{N^s} &= \lim_{N \to \infty} \frac{1}{N^s}\left(\frac{\mathcal{L}_{s}(2N)}{2N} - \frac{\mathcal{L}_{s}(N)}{N}\right) \\
&= \lim_{N \to \infty} \left(2^s \frac{\mathcal{L}_{s}(2N)}{(2N)^{1+s}} - \frac{\mathcal{L}_{s}(N)}{N^{1+s}}\right) \\
&= (2^s - 1)\frac{2\zeta(s)}{(2\pi)^s}.
\end{align*}
\end{proof}

In this section we use the notation
\[
\mathcal{W}_{s}(N):=\frac{\mathcal{U}_{s}(N)}{N^{s}},\qquad s>1.
\]
Therefore we have
\begin{equation}\label{asympWsNsg1}
\lim_{N\rightarrow\infty}\mathcal{W}_{s}(N)=(2^{s}-1)\frac{2\zeta(s)}{(2\pi)^{s}},\qquad s>1.
\end{equation}
Note that this limit value is now positive, contrary to the situation in the range $0<s<1$.

Since we are assuming that $s>1$, for the function $G$ defined in \eqref{def:Gcapfunc} we have
\begin{equation}\label{eq:boundsGsg1}
0<G((\theta_{1},\ldots,\theta_{p});s)=\sum_{k=1}^{p}\theta_{k}^{s} \leq\sum_{k=1}^{p}\theta_{k}=1.
\end{equation}
Recall that we defined $\underline{g}(s):=\inf_{\vec{\theta}\in\Theta}G(\vec{\theta};s)$.

\begin{figure*}
\centering
\begin{subfigure}[b]{0.475\textwidth}
\centering
\includegraphics[width=\textwidth]{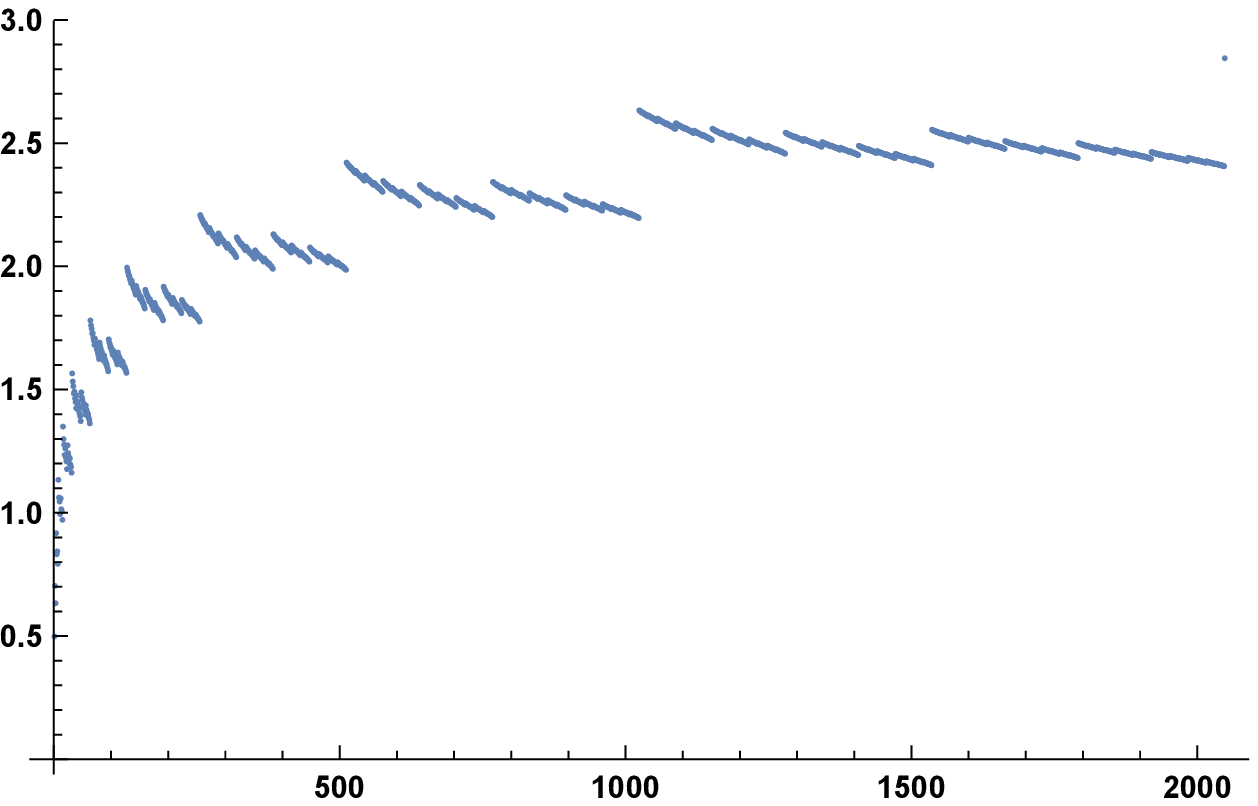}
\caption[]{{$s=1.005$}}    
\end{subfigure}
\hfill
\begin{subfigure}[b]{0.475\textwidth}
\centering 
\includegraphics[width=\textwidth]{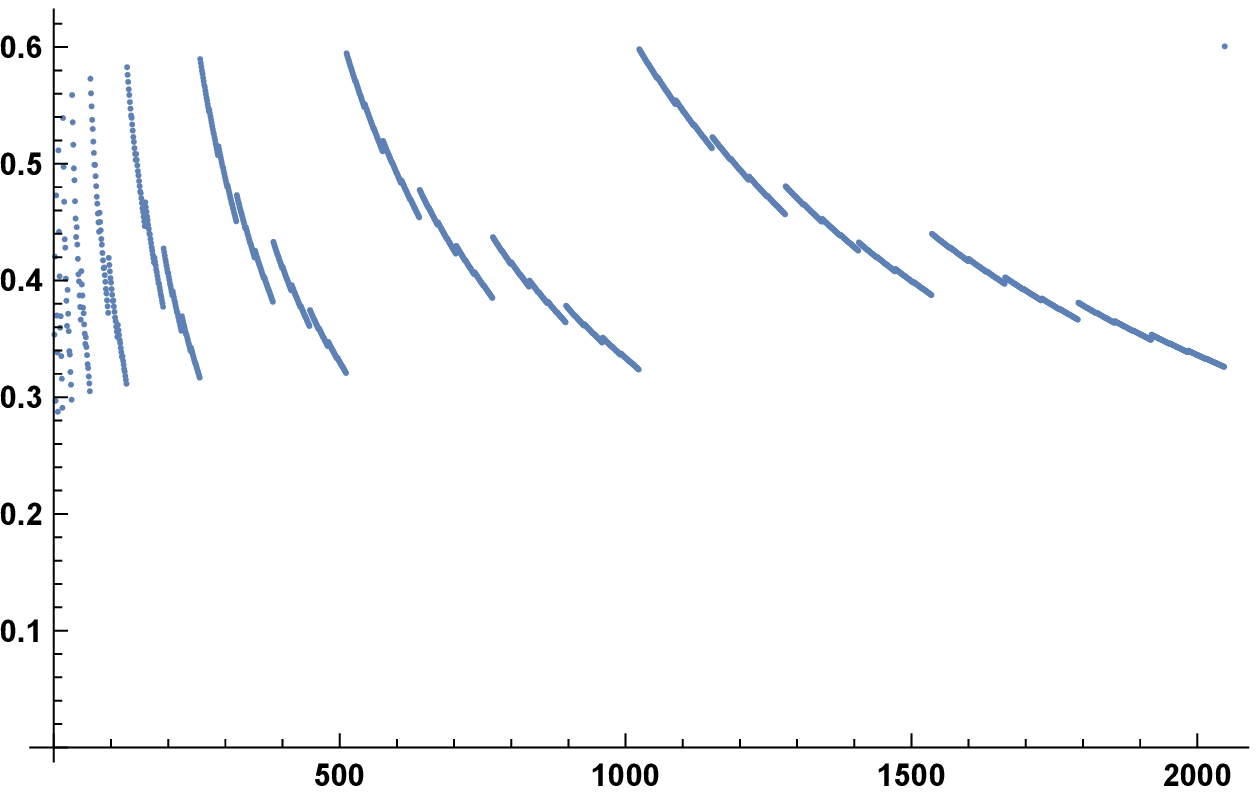}
\caption[]{{$s=1.5$}}
\end{subfigure}
\vskip\baselineskip
\begin{subfigure}[b]{0.475\textwidth}
\centering
\includegraphics[width=\textwidth]{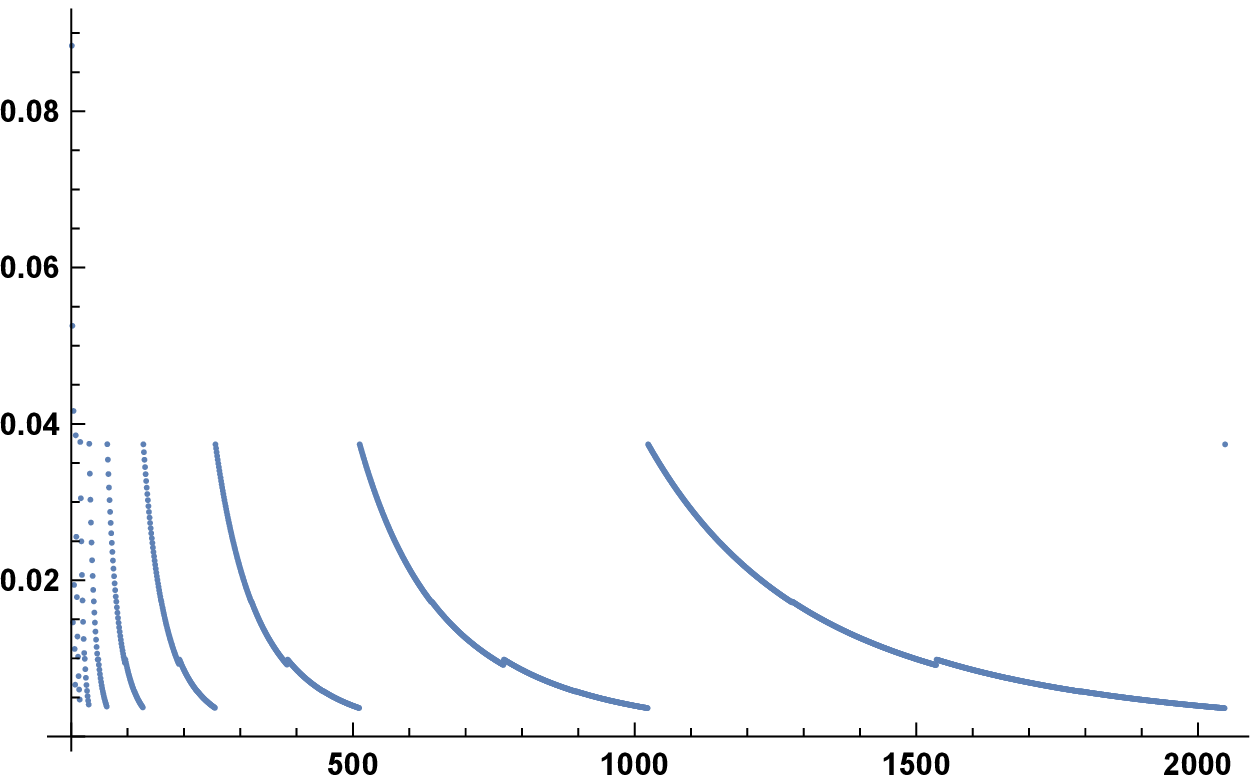}
\caption[]{{$s=3.5$}}    
\end{subfigure}
\hfill
\begin{subfigure}[b]{0.475\textwidth}
\centering 
\includegraphics[width=\textwidth]{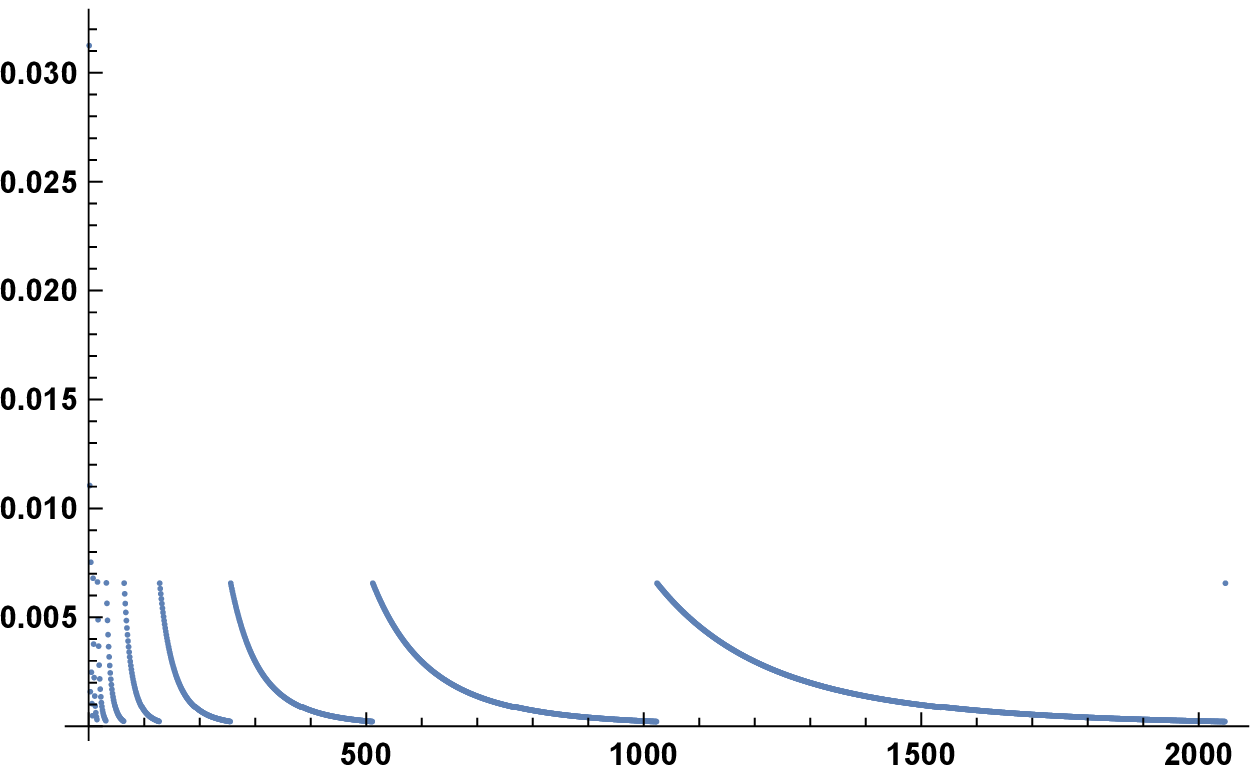}
\caption[]{{$s=5$}}
\end{subfigure}
\caption[]{{Plots of sequences $\Big(\frac{U_{N,s}(a_{N})}{N^{s}}\Big)$, $1\leq N\leq 2048$, for different values of $s>1$.}}
\label{fig4}
\end{figure*}

\noindent\textit{Proof of Theorem}~\ref{theo:foasg1}. If $N=2^{n_{1}}+\cdots+2^{n_{p}}$, then
\[
\frac{U_{N,s}(a_{N})}{N^{s}}=\sum_{k=1}^{p}\frac{\mathcal{U}_{s}(2^{n_{k}})}{N^{s}}
=\sum_{k=1}^{p}\mathcal{W}_{s}(2^{n_{k}})\left(\frac{2^{n_{k}}}{N}\right)^{s}.
\]
If $C>0$ is an upper bound for the sequence $(\mathcal{W}_{s}(N))$, then
\[
0\leq \frac{U_{N,s}(a_{N})}{N^{s}}=\sum_{k=1}^{p}\mathcal{W}_{s}(2^{n_{k}})\left(\frac{2^{n_{k}}}{N}\right)^{s}\leq C\sum_{k=1}^{p}\left(\frac{2^{n_{k}}}{N}\right)^{s}\leq C\sum_{k=1}^{p}\frac{2^{n_{k}}}{N}=C
\] 
so \eqref{seqUnsansg1} is bounded.

To prove \eqref{asymplimsupUsg1}--\eqref{asympliminfUsg1} we can use the same argument in the proof of \eqref{eq:limsupunan}--\eqref{eq:liminfunan}, with some minor changes which we indicate below. Taking the subsequence $(2^{n})$ and applying \eqref{asympWsNsg1} we get
\[
\lim_{n\rightarrow\infty}\frac{U_{2^{n},s}(a_{2^{n}})}{2^{ns}}=\lim_{n\rightarrow\infty}\mathcal{W}_{s}(2^{n})=(2^{s}-1)\frac{2\zeta(s)}{(2\pi)^{s}},
\]  
which implies
\[
\limsup_{N\rightarrow\infty}\frac{U_{N,s}(a_{N})}{N^{s}}\geq (2^{s}-1)\frac{2\zeta(s)}{(2\pi)^{s}}.
\]

If $\vec{\theta}=(\theta_{1},\ldots,\theta_{p})\in\Theta_{p}$ is arbitrary, and $\mathcal{N}$ is a sequence of integers $N=2^{n_{1}}+\cdots+2^{n_{p}}$ such that \eqref{seqassoctheta} holds, then as in \eqref{limspesub} we obtain
\begin{equation}\label{eqineq}
\lim_{N\in\mathcal{N}}\frac{U_{N,s}(a_{N})}{N^{s}}=(2^{s}-1)\frac{2\zeta(s)}{(2\pi)^{s}} G(\vec{\theta};s)\leq (2^{s}-1)\frac{2\zeta(s)}{(2\pi)^{s}},
\end{equation}
where we used \eqref{eq:boundsGsg1}.

To complete the proof of \eqref{asymplimsupUsg1}, we consider as in \eqref{eq:convsubseq} a sequence $\mathcal{N}$ such that
\begin{equation}\label{convsubsg1}
\left(\frac{U_{N,s}(a_{N})}{N^{s}}\right)_{N\in\mathcal{N}}
\end{equation}
converges, and we show that
\begin{equation}\label{desineqlimsup}
\lim_{N\in\mathcal{N}}\frac{U_{N,s}(a_{N})}{N^{s}}\leq (2^{s}-1)\frac{2\zeta(s)}{(2\pi)^{s}}.
\end{equation}
If there exists $p\in\mathbb{N}$ such that the $\tau_{b}(N)=p$ for infinitely many $N\in\mathcal{N}$, then \eqref{eqineq} holds.

If $\tau_{b}(N)\rightarrow\infty$ along $\mathcal{N}$, we do the decomposition 
\begin{equation}\label{decompUns}
\frac{U_{N,s}(a_{N})}{N^{s}}=S_{N,1}+S_{N,2}
\end{equation}
with $S_{N,1}$ and $S_{N,2}$ defined as in \eqref{def:SN1sl1}--\eqref{def:SN2sl1} (with our current definition of $\mathcal{W}_{s}(N)$), corresponding to some choice of $0<\epsilon<1$ and $M$ such that \eqref{smalltail} holds. Then we have the bound \eqref{est:SN2}. Using the inequality
\[
S_{N,1}=\sum_{k=1}^{M}\mathcal{W}_{s}(2^{n_{k}})\left(\frac{2^{n_{k}}}{\widetilde{N}}\right)^{s}\left(\frac{\widetilde{N}}{N}\right)^{s}\leq \sum_{k=1}^{M}\mathcal{W}_{s}(2^{n_{k}})\left(\frac{2^{n_{k}}}{\widetilde{N}}\right)^{s},
\]
where $\widetilde{N}=2^{n_{1}}+2^{n_{2}}+\cdots+2^{n_{M}}$, we obtain, passing to a subsequence $\widehat{\mathcal{N}}\subset\mathcal{N}$ satisfying \eqref{def:subseqNhat}, the inequality
\[
\lim_{N\in\mathcal{N}}\frac{U_{N,s}(a_{N})}{N^{s}}
=\limsup_{N\in\widehat{\mathcal{N}}}(S_{N,1}+S_{N,2})\leq (2^{s}-1)\frac{2\zeta(s)}{(2\pi)^{s}}+C\epsilon,
\]
and letting $\epsilon\rightarrow 0$ we obtain \eqref{desineqlimsup}.

The equality in \eqref{eqineq}, valid for every $\vec{\theta}\in\Theta_{p}$, $p\geq 1$,  immediately implies
\[
\liminf_{N\rightarrow\infty}\frac{U_{N,s}(a_{N})}{N^{s}}\leq \underline{g}(s)(2^{s}-1)\frac{2\zeta(s)}{(2\pi)^{s}}.
\] 
To prove the reverse inequality
\[
\liminf_{N\rightarrow\infty}\frac{U_{N,s}(a_{N})}{N^{s}}\geq \underline{g}(s)(2^{s}-1)\frac{2\zeta(s)}{(2\pi)^{s}},
\]
we take as before a sequence $\mathcal{N}$ such that \eqref{convsubsg1} converges, and show that
\begin{equation}\label{revineqliminf}
\lim_{N\in\mathcal{N}}\frac{U_{N,s}(a_{N})}{N^{s}}\geq \underline{g}(s)(2^{s}-1)\frac{2\zeta(s)}{(2\pi)^{s}}.
\end{equation}
We use again the decomposition \eqref{decompUns}. In the case that $\tau_{b}(N)\rightarrow\infty$ along the sequence $\mathcal{N}$, we need to use the inequality 
\[
S_{N,1}=\left(\frac{\widetilde{N}}{N}\right)^{s}\sum_{k=1}^{M}\mathcal{W}_{s}(2^{n_{k}})\left(\frac{2^{n_{k}}}{\widetilde{N}}\right)^{s}\geq (1-\epsilon)^{s}\sum_{k=1}^{M}\mathcal{W}_{s}(2^{n_{k}})\left(\frac{2^{n_{k}}}{\widetilde{N}}\right)^{s}
\]
and passing to an appropriate subsequence $\widehat{\mathcal{N}}\subset\mathcal{N}$ satisfying \eqref{def:subseqNhat}, using $S_{N,2}\geq 0$, we will get
\begin{align*}
\lim_{N\in\mathcal{N}} \frac{U_{N,s}(a_{N})}{N^{s}} & =\lim_{N\in\widehat{\mathcal{N}}}(S_{N,1}+S_{N,2})\geq \liminf_{N\in\widehat{\mathcal{N}}} S_{N,1}\\
& \geq (1-\epsilon)^{s} G(\vec{\theta};s) (2^{s}-1)\frac{2\zeta(s)}{(2\pi)^{s}}\\
& \geq (1-\epsilon)^{s}\underline{g}(s) (2^{s}-1)\frac{2\zeta(s)}{(2\pi)^{s}}
\end{align*}
for some $\vec{\theta}\in\Theta_{M}$, which implies \eqref{revineqliminf} after taking $\epsilon\rightarrow 0$.\qed

\begin{remark}
As in Proposition~\ref{prop:limPowerOf2Minus1}, using \eqref{asympLsnsg1} we can show that for the sequence $N(p)=2^{p}-1$ we have
\[
\lim_{p\rightarrow\infty}\frac{U_{N(p),s}(a_{N(p)})}{N(p)^{s}}=\frac{2\zeta(s)}{(2\pi)^{s}},\qquad s>1.
\]
Hence
\[
\liminf_{N\rightarrow\infty}\frac{U_{N,s}(a_{N})}{N^s}\leq \frac{2\zeta(s)}{(2\pi)^{s}}.
\]
This and \eqref{asympliminfUsg1} imply in particular that
\[
\underline{g}(s)\leq \frac{1}{2^{s}-1},\qquad s>1.
\]
\end{remark}

\section{Proof of Theorem~\ref{theo:genseq}}\label{sec:genseq}

The method we employ is the same one used in the case $p=0$ (as illustrated for example in \cite[Section V.1]{SaffTotik}), only slightly adapted to the case $p\geq 1$. Let $0\leq s<1$ and $k_{s}(z,w)$ be the kernel defined in \eqref{def:kernelks}. We shall use the notation
\[
U_{s}^{\sigma}(z)=\int_{S^{1}}k_{s}(z,w)\,d\sigma(w).
\] 
Clearly, $U_{s}^{\sigma}$ is constant on $S^{1}$ and its value is $I_{s}(\sigma)$, which equals zero if $s=0$.  

Let $(a_{n})_{n=0}^{\infty}$ be a greedy $s$-energy sequence on $S^{1}$ with initial set $(a_0,\ldots,a_{p})$ of $p+1$ distinct points, $p\geq 1$. For every $N\geq p+2$, we have
\begin{align*}
E_{s}(\alpha_{N,s}) & =2\sum_{j=1}^{N-1}\sum_{i=0}^{j-1}k_{s}(a_{j},a_{i})=2\sum_{j=1}^{p}\sum_{i=0}^{j-1}k_{s}(a_{j},a_{i})+2\sum_{j=p+1}^{N-1}\sum_{i=0}^{j-1}k_{s}(a_{j},a_{i})\\
& \leq E_{s}(\alpha_{p+1,s})+2\sum_{j=p+1}^{N-1}\sum_{i=0}^{j-1}k_{s}(z,a_{i})
\end{align*}
valid for all $z\in S^{1}$, where we have applied \eqref{eq:optgencase}. Integrating this inequality with respect to the probability measure $\sigma$, we get
\begin{align}
E_{s}(\alpha_{N,s}) & \leq E_{s}(\alpha_{p+1,s})+2\sum_{j=p+1}^{N-1}\sum_{i=0}^{j-1} U_{s}^{\sigma}(a_{i})\notag\\
& =E_{s}(\alpha_{p+1,s})+I_{s}(\sigma)(N(N-1)-p(p+1)).\label{eq:keyest}
\end{align}
On the other hand, if $\omega_{N}$ denotes the configuration formed by the $N$-th roots of unity, we have $E_{s}(\omega_{N})\leq E_{s}(\alpha_{N,s})$ for all $0\leq s<1$ and $N\geq 2$, and we also have 
\begin{equation}\label{eq:asympeqspaced}
\lim_{N\rightarrow\infty}\frac{E_{s}(\omega_{N})}{N^{2}}=I_{s}(\sigma)
\end{equation}
(cf. Theorems 2.3.3 and 4.4.9 in \cite{BorHarSaff}). So the inequality $E_{s}(\omega_{N})\leq E_{s}(\alpha_{N,s})$ combined with \eqref{eq:keyest} and \eqref{eq:asympeqspaced} imply 
\begin{equation}\label{eq:asympenergbisbis}
\lim_{N\rightarrow\infty}\frac{E_{s}(\alpha_{N,s})}{N^{2}}=I_{s}(\sigma).
\end{equation} 
In virtue of Theorem 4.4.9 in \cite{BorHarSaff}, \eqref{eq:asympenergbisbis} in turn implies \eqref{eq:unifdist}.

Now we assume $s>0$ and justify the formula
\begin{equation}\label{asympformU1}
\lim_{n\rightarrow\infty}\frac{U_{N,s}(a_{N})}{N}=I_{s}(\sigma),\qquad 0<s<1.
\end{equation}
If $n\geq p+1$,
\begin{equation}\label{ineq1}
U_{n,s}(a_{n})=\sum_{k=0}^{n-1}\frac{1}{|a_{n}-a_{k}|^{s}}\leq \sum_{k=0}^{n-1}\frac{1}{|z-a_{k}|^{s}}\qquad\mbox{for all}\,\,z\in S^{1}.
\end{equation}
If we take $z=a_{n+1}$ in this inequality we get
\begin{equation}\label{ineq1bis}
U_{n,s}(a_{n})\leq \sum_{k=0}^{n-1}\frac{1}{|a_{n+1}-a_{k}|^{s}}=U_{n+1,s}(a_{n+1})-\frac{1}{|a_{n+1}-a_{n}|^{s}}
\end{equation}
hence
\begin{equation}\label{ineq2}
U_{n,s}(a_{n})\leq U_{n+1,s}(a_{n+1}),\qquad n\geq p+1.
\end{equation}
Integrating both sides of the inequality \eqref{ineq1} with respect to $\sigma$ we also obtain
\begin{equation}\label{ineq3}
U_{n,s}(a_{n})\leq n\,I_{s}(\sigma),\qquad n\geq p+1.
\end{equation}

\noindent\textbf{Claim:} Let $0<\epsilon<I_{s}(\sigma)$ and suppose that $m\geq p+2$ is such that
\begin{equation}\label{ineq4}
\frac{U_{m,s}(a_{m})}{m}<I_{s}(\sigma)-\epsilon.
\end{equation}
Then, for any $p+1\leq \kappa_{m}<m$, we have
\begin{equation}\label{ineq5}
E_{s}(\alpha_{m+1,s})< E_{s}(\alpha_{p+1,s})+I_{s}(\sigma)(\kappa_{m}(\kappa_{m}+1)-p(p+1))+2m(m-\kappa_{m})(I_{s}(\sigma)-\epsilon).
\end{equation}
Indeed, we have
\begin{align*}
\frac{1}{2}E_{s}(\alpha_{m+1,s}) & =\sum_{i=1}^{m}U_{i,s}(a_{i})=\sum_{i=1}^{p}U_{i,s}(a_{i})+\sum_{i=p+1}^{\kappa_{m}}U_{i,s}(a_{i})+\sum_{i=\kappa_{m}+1}^{m}U_{i,s}(a_{i})\\
& <\frac{1}{2}E_{s}(\alpha_{p+1,s})+\sum_{i=p+1}^{\kappa_{m}}i I_{s}(\sigma)+\sum_{i=\kappa_{m}+1}^{m}m(I_{s}(\sigma)-\epsilon)
\end{align*}
where we have used \eqref{ineq3}, \eqref{ineq2}, and $U_{i,s}(a_{i})\leq U_{m,s}(a_{m})<m(I_{s}(\sigma)-\epsilon)$ for all $i\leq m$. So \eqref{ineq5} is justified.

Now we finish the proof of \eqref{asympformU1}. Let $0<\epsilon<I_{s}(\sigma)$. In view of \eqref{ineq3}, it suffices to show that only finitely many indices $m$ satisfy \eqref{ineq4}. So assume that for infinitely many $m\geq p+2$ we have \eqref{ineq4}. Let $\mathcal{N}$ be the sequence formed by such indices $m$. Then from \eqref{ineq5} we obtain
\begin{equation}\label{ineq6}
\frac{E_{s}(\alpha_{m+1,s})}{(m+1)^2}\leq I_{s}(\sigma)\frac{\kappa_{m}(\kappa_{m}+1)}{(m+1)^2}+2(I_{s}(\sigma)-\epsilon)\frac{m(m-\kappa_{m})}{(m+1)^2}+\eta_{m}
\end{equation}
where $\eta_{m}=O(m^{-2})$. Now we choose $\kappa_{m}$ so that
\begin{equation}\label{limnormkm}
\lim_{m\in\mathcal{N}}\frac{\kappa_{m}}{m}=1-\frac{\epsilon}{I_{s}(\sigma)}.
\end{equation}
This is possible since $0<1-\epsilon/I_{s}(\sigma)<1$. If we call $l_{\epsilon}$ the limit value in \eqref{limnormkm}, then from \eqref{ineq6} we deduce that
\[
\limsup_{m\in\mathcal{N}} \frac{E_{s}(\alpha_{m+1,s})}{(m+1)^2}\leq I_{s}(\sigma) l_{\epsilon}^2+2(I_{s}(\sigma)-\epsilon)(1-l_{\epsilon})=I_{s}(\sigma)-\frac{\epsilon^2}{I_{s}(\sigma)}<I_{s}(\sigma)
\] 
which contradicts \eqref{eq:asympenergbisbis}. Therefore only finitely many $m$'s satisfy \eqref{ineq4} and this concludes the proof of \eqref{asympformU1}. 

Finally we assume $s=0$ and prove 
\begin{equation}\label{asympformU2}
\lim_{N\rightarrow\infty}\frac{U_{N,0}(a_{N})}{N}=0.
\end{equation}
Arguing as in \eqref{ineq1}, \eqref{ineq1bis}, and \eqref{ineq3}, we obtain
\[
U_{n,0}(a_{n})\leq U_{n+1,0}(a_{n+1})-\log\frac{1}{|a_{n+1}-a_{n}|},\qquad n\geq p+1,
\]
and
\begin{equation}\label{ineq7}
U_{n,0}(a_{n})\leq n\,I_{0}(\sigma)=0,\qquad n\geq p+1.
\end{equation}
Hence
\begin{equation}\label{ineq8}
U_{n,0}(a_{n})\leq U_{n+1,0}(a_{n+1})+L,\qquad n\geq p+1,
\end{equation}
where $L=\log 2$.

Let $0<\epsilon<L$, and suppose that there are infinitely many $m\geq p+2$ such that
\begin{equation}\label{ineq9}
\frac{U_{m,0}(a_{m})}{m}<-\epsilon.
\end{equation}
We will reach a contradiction, which together with \eqref{ineq7} will imply \eqref{asympformU2}. Let $\mathcal{N}$ be the sequence formed by all $m$ satisfying \eqref{ineq9}. A repeated application of \eqref{ineq8} shows that for every $p+1\leq i\leq m$, $m\in\mathcal{N}$, we have
\[
U_{i,0}(a_{i})\leq U_{m,0}(a_{m})+(m-i)L<-\epsilon\, m+(m-i)L.
\]
Using this estimate and \eqref{ineq7}, we deduce that for any $p+1\leq \kappa_{m}<m$, 
\begin{align*}
\frac{1}{2}E_{0}(\alpha_{m+1,0}) & =\sum_{i=1}^{m}U_{i,0}(a_{i})=\sum_{i=1}^{p}U_{i,0}(a_{i})+\sum_{i=p+1}^{\kappa_{m}} U_{i,0}(a_{i})+\sum_{i=\kappa_{m}+1}^{m}U_{i,0}(a_{i})\\
& <\frac{1}{2} E_{0}(\alpha_{p+1,0})+\sum_{i=\kappa_{m}+1}^{m}(-\epsilon\,m+(m-i)L)\\
& =\frac{1}{2}E_{0}(\alpha_{p+1,0})-m\,\epsilon\,(m-\kappa_{m})+\frac{L}{2}(m-\kappa_{m}-1)(m-\kappa_{m}).
\end{align*}
Hence
\begin{equation}\label{ineq10}
\frac{E_{0}(\alpha_{m+1,0})}{(m+1)^{2}}\leq -\frac{2m\epsilon(m-\kappa_{m})}{(m+1)^2}+L\frac{(m-\kappa_{m}-1)(m-\kappa_{m})}{(m+1)^2}+\eta_{m}
\end{equation}
where $\eta_{m}=O(m^{-2})$. We can choose $\kappa_{m}$ so that
\[
\lim_{m\in\mathcal{N}}\frac{\kappa_{m}}{m}=1-\frac{\epsilon}{L}\in(0,1).
\]
Let $l_{\epsilon}=1-\epsilon/L$. Taking limit in \eqref{ineq10} we get
\[
\limsup_{m\in\mathcal{N}}\frac{E_{0}(\alpha_{m+1,0})}{(m+1)^2}\leq -2\epsilon (1-l_{\epsilon})+L(1-l_{\epsilon})^2=-\frac{\epsilon^2}{L}<0.
\]
This is a contradiction since we have shown that $\lim_{N\rightarrow\infty} E_{0}(\alpha_{N,0})/N^2=0$.

\bigskip

\section{Statements}

The authors declare that there are no conflicts of interest. This research did not receive any specific grant from funding agencies in the public, commercial, or not-for-profit sectors.

\bigskip

\noindent \textsc{Department of Mathematics, University of Central Florida, 4393 Andromeda Loop North, Orlando, FL 32816} \\
\textit{Email address}: \texttt{abey.lopez-garcia\symbol{'100}ucf.edu}

\bigskip

\noindent \textsc{Department of Mathematics, University of Central Florida, 4393 Andromeda Loop North, Orlando, FL 32816}\\
\textit{Email address}: \texttt{remccleary\symbol{'100}knights.ucf.edu}

\end{document}